\documentclass[11pt]{article}
\usepackage{latexsym,amsfonts,amssymb,amsmath,amsthm}

\usepackage{color,comment}

\bibliographystyle{plain}

\parindent 0.5cm
\evensidemargin 0cm \oddsidemargin 0cm \topmargin 0cm \textheight 22cm \textwidth 16cm \footskip 2cm \headsep
0cm

\begin{document}

\newtheorem{thm}{Theorem}[section]
\newtheorem{cor}{Corollary}[section]
\newtheorem{lem}{Lemma}[section]
\newtheorem{prop}{Proposition}[section]
\newtheorem{defn}{Definition}[section]
\newtheorem{rk}{Remark}[section]
\newtheorem{nota}{Notation}[section]
\newtheorem{Ex}{Example}[section]
\def\nm{\noalign{\medskip}}

\numberwithin{equation}{section}

\newcommand{\ds}{\displaystyle}
\newcommand{\pf}{\medskip \noindent {\sl Proof}. ~ }
\newcommand{\p}{\partial}
\renewcommand{\a}{\alpha}
\newcommand{\z}{\zeta}
\newcommand{\pd}[2]{\frac {\p #1}{\p #2}}
\newcommand{\norm}[1]{\left\| #1 \right \|}
\newcommand{\dbar}{\overline \p}
\newcommand{\eqnref}[1]{(\ref {#1})}
\newcommand{\na}{\nabla}
\newcommand{\Om}{\Omega}
\newcommand{\ep}{\epsilon}
\newcommand{\tmu}{\widetilde \epsilon}
\newcommand{\vep}{\varepsilon}
\newcommand{\tlambda}{\widetilde \lambda}
\newcommand{\tnu}{\widetilde \nu}
\newcommand{\vp}{\varphi}
\newcommand{\RR}{\mathbb{R}}
\newcommand{\CC}{\mathbb{C}}
\newcommand{\NN}{\mathbb{N}}
\renewcommand{\div}{\mbox{div}~}
\newcommand{\bu}{{\bf u}}
\newcommand{\la}{\langle}
\newcommand{\ra}{\rangle}
\newcommand{\Scal}{\mathcal{S}}
\newcommand{\Lcal}{\mathcal{L}}
\newcommand{\Kcal}{\mathcal{K}}
\newcommand{\Dcal}{\mathcal{D}}
\newcommand{\tScal}{\widetilde{\mathcal{S}}}
\newcommand{\tKcal}{\widetilde{\mathcal{K}}}
\newcommand{\Pcal}{\mathcal{P}}
\newcommand{\Qcal}{\mathcal{Q}}
\newcommand{\id}{\mbox{Id}}
\newcommand{\stint}{\int_{-T}^T{\int_0^1}}

\newcommand{\be}{\begin{equation}}
\newcommand{\ee}{\end{equation}}

\newcommand{\rd}{{\mathbb R^d}}
\newcommand{\rr}{{\mathbb R}}
\newcommand{\alert}[1]{\fbox{#1}}
\newcommand{\eqd}{\sim}
\def\R{{\mathbb R}}
\def\N{{\mathbb N}}
\def\Q{{\mathbb Q}}
\def\C{{\mathbb C}}
\def\ZZ{{\mathbb Z}}
\def\l{{\langle}}
\def\r{\rangle}
\def\t{\tau}
\def\k{\kappa}
\def\a{\alpha}
\def\la{\lambda}
\def\De{\Delta}
\def\de{\delta}
\def\ga{\gamma}
\def\Ga{\Gamma}
\def\ep{\varepsilon}
\def\eps{\varepsilon}
\def\si{\sigma}
\def\Re {{\rm Re}\,}
\def\Im {{\rm Im}\,}
\def\E{{\mathbb E}}
\def\P{{\mathbb P}}
\def\Z{{\mathbb Z}}
\def\D{{\mathbb D}}
\def\p{\partial}
\newcommand{\ceil}[1]{\lceil{#1}\rceil}

\title{Logistic type attraction-repulsion chemotaxis systems with  a free boundary or unbounded boundary.
II. Spreading-vanishing dichotomy in a domain with a free boundary}

\author{Lianzhang Bao\thanks{School of Mathematics, Jilin University, Changchun, 130012, P. R. China, and Department of Mathematics and Statistics,
Auburn University,  AL 36849, U. S. A. (lzbao@jlu.edu.cn).}\,\,   and
Wenxian Shen \thanks{Department of Mathematics and Statistics,
Auburn University,
 AL 36849, U. S. A. (wenxish@auburn.edu), partially supported by the NSF grant DMS--1645673.}}

\date{}

\maketitle

\begin{abstract}
The current series of research papers is to investigate the asymptotic dynamics in logistic type chemotaxis models in one space dimension with a free boundary or unbounded boundary.
Such a model with a free boundary  describes the spreading of a new or invasive species subject to the influence of some chemical substances
in an environment with a free boundary representing the spreading front.
In this first of the series,  we  investigated the dynamical behaviors of logistic type chemotaxis models on the half line $\mathbb{R}^+$, which are formally corresponding limit systems of the free boundary problems.
In the second of the series, we  establish the spreading-vanishing dichotomy in  chemoattraction-repulsion systems with a free boundary as well as with double free boundaries.
\end{abstract}

\textbf{Key words.} Chemoattraction-repulsion system, nonlinear parabolic equations, free boundary problem, spreading-vanishing dichotomy, invasive population.

\medskip

\textbf{AMS subject classifications.}
35R35, 35J65, 35K20, 92B05.

\section{Introduction}
The current series of research papers is  to  study  the spreading and vanishing dynamics of
 the following attraction-repulsion chemotaxis system with a  free boundary and time and space dependent logistic source,
\begin{equation}\label{one-free-boundary-eq}
\begin{cases}
u_t = u_{xx} -\chi_1  (u  v_{1,x})_x + \chi_2 (u v_{2,x})_x + u(a(t,x) - b(t,x)u), \quad 0<x<h(t)
\\
 0 = \partial_{xx} v_1 - \lambda _1v_1 + \mu_1u,  \quad  0<x<h(t)
 \\
 0 = \partial_{xx}v_2 - \lambda_2 v_2 + \mu_2u,  \quad 0<x<h(t)
 \\
 h'(t) = -\nu u_x(t,h(t))
\\
u_x(t,0) = v_{1,x}(t,0) = v_{2,x}(t,0) = 0
\\
 u(t,h(t)) = v_{1,x}(t,h(t)) = v_{2,x}(t,h(t)) = 0
 \\
 h(0) = h_0,\quad u(x,0) = u_0(x),\quad  0\leq x\leq h_0,
\end{cases}
\end{equation}
and to study the
asymptotic dynamics of
\begin{equation}
\label{half-line-eq1}
\begin{cases}
u_t = u_{xx} -\chi_1  (u  v_{1,x})_x +\chi_2(u v_{2,x})_x+ u(a(t,x) - b(t,x)u),\quad x\in (0,\infty)
\cr
 0 = v_{1,xx} - \lambda_1v_1 + \mu_1u,  \quad x\in (0,\infty)\cr
 0=v_{2,xx}-\lambda_2 v_2+\mu_2 u,  \quad x\in (0,\infty)\cr
u_x(t,0)=v_{1,x}(t,0)=v_{2,x}(t,0)=0,
\end{cases}
\end{equation}
where $\nu>0$ in \eqref{one-free-boundary-eq} is a positive constant,  and in both \eqref{one-free-boundary-eq} and \eqref{half-line-eq1},   $\chi_i$, $\lambda_i$, and $\mu_i$ ($i=1,2$)  are nonnegative constants,   and $a(t,x)$ and $b(t,x)$  satisfy the following assumption,

\medskip

\noindent {\bf (H0)} {\it $a(t,x)$ and $b(t,x)$ are bounded $C^1$ functions on $\RR\times [0,\infty)$,
and
$$a_{\inf}:=\inf_{t\in\RR,x\in [0,\infty)}a(t,x)>0,\quad b_{\inf}:=\inf_{t\in\RR,x\in[0,\infty)}b(t,x)>0.
$$
}

\medskip

Biological backgrounds of \eqref{one-free-boundary-eq} and \eqref{half-line-eq1} are discussed in the first part of the series (\cite{BaoShen1}) and \cite{luca2003chemotactic}. The free boundary condition in \eqref{one-free-boundary-eq} is also derived in \cite{BaoShen1} based on the assumption that, as the expanding front propagates, the population suffers a loss of constant units per unit volume at the front, and that, near the propagating front, the
   population density is  close to zero.
Formally, \eqref{half-line-eq1} can be viewed as the limit system of \eqref{one-free-boundary-eq} as $h(t)\to \infty$.

The objective of this series is to investigate the asymptotic dynamics of \eqref{half-line-eq1} and
  the spreading and vanishing scenario in \eqref{one-free-boundary-eq}. In the first part of the series (\cite{BaoShen1}), we studied the asymptotic dynamics of \eqref{half-line-eq1}.  In this second part of the series, we study the spreading and vanishing scenario in \eqref{one-free-boundary-eq}.
  To state the main results of the current paper, we first recall some results proved in \cite{BaoShen1}.

Let
$$
C_{\rm unif}^b(\RR^+)=\{u\in C(\RR^+)\,|\, u(x)\,\, \text{is uniformly continuous and bounded on}\,\, \RR^+\}
$$
with norm $\|u\|_\infty=\sup_{x\in\RR^+}|u(x)|$, and
$$
C_{\rm unif}^b(\RR)=\{u\in C(\RR)\,|\, u(x)\,\, \text{is uniformly continuous and bounded on}\,\, \RR\}
$$
with norm $\|u\|_\infty=\sup_{x\in\RR}|u(x)|$. Define
\begin{align}\label{m-eq}
M= \min\Big\{ &\frac{1}{\lambda_2}\big( (\chi_2\mu_2\lambda_2-\chi_1\mu_1\lambda_1)_+ + \chi_1\mu_1(\lambda_1-\lambda_2)_+ \big),\nonumber\\
& \qquad \frac{1}{\lambda_1}\big( (\chi_2\mu_2\lambda_2-\chi_1\mu_1\lambda_1)_+ + \chi_2\mu_2(\lambda_1-\lambda_2)_+ \big) \Big\}
 \end{align}
 and
 \begin{align}\label{k-eq}
K=\min\Big\{&\frac{1}{\lambda_2}\Big(|\chi_1\mu_1\lambda_1-\chi_2\mu_2\lambda_2|+\chi_1\mu_1|\lambda_1-\lambda_2|\Big),\nonumber\\
&\quad  \frac{1}{\lambda_1}\Big(|\chi_1\mu_1\lambda_1-\chi_2\mu_2\lambda_2|+\chi_2\mu_2|\lambda_1-\lambda_2|\Big) \Big\}.
\end{align}
  Let {\bf (H1)}- {\bf (H3)}  be the following standing assumptions.

\medskip

\noindent {\bf (H1)}  $b_{\inf}>\chi_1\mu_1-\chi_2\mu_2+M$.

\medskip

\noindent {\bf (H2)} $b_{\inf}>\Big(1+\frac{a_{\sup}}{a_{\inf}}\Big)\chi_1\mu_1-\chi_2\mu_2+M$.

\medskip

\noindent {\bf (H3)}  $b_{\inf}>\chi_1\mu_1-\chi_2\mu_2+K$.

\medskip

Note  that
$$
M\le \chi_2\mu_2.
$$
Hence $b_{\inf}\ge \chi_1\mu_1$ implies {\bf (H1)}. In the case $\chi_2=0$, we can choose $\lambda_2=\lambda_1$,  and then
$M=0$ and $K=\chi_1\mu_1$. Hence {\bf (H1)} becomes
$b_{\inf}>\chi_1\mu_1$, {\bf (H2)} becomes $b_{\inf}>(1+\frac{a_{\sup}}{a_{\inf}})\chi_1\mu_1$, and {\bf (H3)} becomes $b_{\inf}>2\chi_1\mu_1$. In the case $\chi_1=0$, we can also choose $\lambda_1=\lambda_2$, and then $M=\chi_2\mu_2$ and $K=\chi_2\mu_2$. Hence {\bf (H1)}
(resp.{\bf (H2)}, {\bf (H3)}) becomes $b_{\inf}>0$.
Biologically, {\bf (H1), (H2)}, and {\bf (H3)} indicate that the chemo-attraction sensitivity is relatively small with respect to logistic damping.

\smallskip

When {\bf (H1)} holds, we put
\begin{equation}
\label{M0-eq}
M_0=\frac{a_{\sup}}{b_{\inf}+\chi_2\mu_2-\chi_1\mu_1-M}
\end{equation}
and
\begin{equation}
\label{m0-eq}
m_0= \frac{a_{\inf}\big(b_{\inf}-(1+\frac{a_{\sup}}{a_{\inf}})\chi_1\mu_1+\chi_2\mu_2-M\big)}{(b_{\inf}-\chi_1\mu_1+\chi_2\mu_2-M)
(b_{\sup}-\chi_1\mu_1+\chi_2\mu_2)}.
\end{equation}
Note that if {\bf (H2)} holds, then $m_0>0$.

Among those, we proved the following results in \cite{BaoShen1}.

\begin{thm}
\label{half-line-thm}
Consider \eqref{half-line-eq1}.
\begin{itemize}
\item[(1)] (Global existence) If {\bf (H1)} holds, then for any $t_0\in\RR$ and any nonnegative function $u_0\in C^{b}_{\rm unif}(\RR^+)$, \eqref{half-line-eq1} has a unique solution
    $(u(t,x;t_0,u_0),v_1(t,x;t_0,u_0)$, $v_2(t,x;t_0,u_0))$ with $u(t_0,x;t_0,u_0)=u_0(x)$ defined for $t\ge t_0$. Moreover,
    $$
0\le u(t,x;t_0,u_0)\le  \max\{\|u_0\|_\infty, M_0\}\quad \forall \,\, t\in [t_0,\infty), \,\, x\in [0,\infty)
$$
(see  \cite[Theorem 1.1]{BaoShen1}),
and
$$
\limsup_{t\to\infty}\sup_{x\in\RR^+} u(t,x;t_0,u_0)\le M_0
$$
(see \cite[Lemma 2.3]{BaoShen1}).

\item[(2)] (Persistence, \cite[Theorem 1.2]{BaoShen1})
\begin{itemize}

 \item[(i)]
 If {\bf (H1)} holds, then for any $u_0\in C_{\rm unif}^b(\RR^+)$ with $\inf_{x\in\RR^+}u_0(x)>0$, there is $m(u_0)>0$ such that
 $$
 m(u_0)\le u(t,x;t_0,u_0)\le \max\{\|u_0\|_\infty,M_0\}\quad \forall\,\, t\ge t_0,\,\, x\in\RR^+.
 $$

 \item[(ii)] If {\bf (H2)} holds, then, for any $u_0\in C_{\rm unif}^b(\RR^+)$ {with $\inf_{x\in\RR^+}u_0(x)>0$,} there is $T(u_0)>0$ such that
 $$
 m_0\le u(t,x;t_0,u_0)\le M_0+1\quad \forall \, t_0\in\RR\,\, t\ge t_0+T(u_0),\,\, x\in\RR^+.
 $$
\end{itemize}

\item[(3)] (Existence of strictly positive entire solution, \cite[Theorem 1.3(1)]{BaoShen1}) If {\bf (H1)} holds, then \eqref{half-line-eq1}
admits a strictly positive entire solution $(u^*(t,x),v_1^*(t,x),v_2^*(t,x))$ (i.e., $(u^*(t,x),v_1^*(t,x)$, $v_2^*(t,x))$ is defined for all
$t\in\RR$ and $\inf_{t\in\RR,x\in\RR^+}u^*(t,x)>0$). Moreover, if
 $a(t+T,x)\equiv a(t,x)$ and $b(t+T,x)\equiv b(t,x)$, then \eqref{half-line-eq1}
admits a strictly positive $T$-periodic solution $ (u^*(t,x)$, $v_1^*(t,x)$, $v_2^*(t,x))= (u^*(t+T,x),v_1^*(t+T,x),v_2^*(t+T,x)) $.

\item[(4)] (Stability and uniqueness of strictly positive entire solution, \cite[Theorem 1.3(2)]{BaoShen1})

\begin{itemize}

\item[(i)]  Assume {\bf (H3)} and $a(t,x)\equiv a(t)$ and $b(t,x)\equiv b(t)$, then \eqref{half-line-eq1}
 admits a unique strictly positive entire solution $(u^*(t,x),v_1^*(t,x),v_2^*(t,x))$, and
 for any $u_0\in C_{\rm unif}^b(\RR^+)$ with $\inf_{x\in \RR}u_0(x)>0$,
   \begin{equation}\label{Eq-asymptotic-0}
 \lim_{t\to\infty} \| u(t+t_0,\cdot;t_0,u_0) - u^*(t+t_0,\cdot)\|_\infty =0, \forall\,  t_0\in \mathbb{R}.
\end{equation}

\item[(ii)] Assume {\bf (H3)}.
There are $\chi_1^*>0$ and $\chi_2^*>0$ such that, if $0\le \chi_1\le \chi_1^*$ and $0\le \chi_2\le \chi_2^*$, then
 \eqref{half-line-eq1}
 admits a unique strictly positive entire solution $(u^*(t,x),v_1^*(t,x),v_2^*(t,x))$, and
for any $u_0\in C_{\rm unif}^b(\RR^+)$ with $\inf_{x\in \RR^+}u_0(x)>0$,
\eqref{Eq-asymptotic-0} holds.
\end{itemize}
\end{itemize}
\end{thm}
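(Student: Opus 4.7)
The plan is to prove the four parts of Theorem \ref{half-line-thm} in order, each leaning on the previous ones. Since the $v_i$-equations are elliptic in $x$ for each fixed $t$, I would first write $v_i$ as a bounded linear operator applied to $u$ via the Green's function on $\RR^+$ with Neumann data at $0$, reducing the system to a single quasilinear parabolic equation for $u$ with nonlocal drift. Expanding the chemotactic term using $v_{i,xx}=\lambda_i v_i-\mu_i u$ rewrites the $u$-equation as
\begin{equation*}
u_t = u_{xx} - (\chi_1 v_{1,x}-\chi_2 v_{2,x})u_x + u\bigl(a - (b-\chi_1\mu_1+\chi_2\mu_2)u - \chi_1\lambda_1 v_1 + \chi_2\lambda_2 v_2\bigr),
\end{equation*}
and local existence in $C^b_{\rm unif}(\RR^+)$ follows from standard parabolic theory. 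For (1), a maximum-principle argument combined with pointwise bounds on $v_i$ in terms of $u$ (whose sharp constants yield $M$ and $K$) gives $u\le\max\{\|u_0\|_\infty,M_0\}$ as soon as (H1) forces the effective reaction to change sign, and hence global existence with the sharp asymptotic upper bound $M_0$. For (2), once $u$ is uniformly bounded the drift $\chi_1 v_{1,x}-\chi_2 v_{2,x}$ is uniformly bounded as well, so $u$ satisfies a linear parabolic inequality whose reaction is bounded below by $u(a_{\inf}-C\|u\|_\infty)$; a Harnack estimate then propagates any positive floor from positive initial data, and refining the reaction floor using the explicit upper bound $M_0$ under (H2) produces the quantitative lower bound $m_0$.

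\textbf{Entire solutions (3) and homogeneous-coefficient stability (4)(i).} For (3) I would use a diagonal compactness argument: solve the Cauchy problem with $t_0=-n$ from a uniform positive constant initial datum (e.g.\ $u_0\equiv m_0$ under (H2), or a suitable sub-/super-solution sandwich under only (H1)), extract a $C^{1+\alpha/2,2+\alpha}_{\rm loc}$ subsequential limit using (1)--(2) and interior parabolic regularity, and verify that the limit is an entire solution with the required strictly positive infimum. The $T$-periodic refinement uses a Schauder fixed-point argument for the time-$T$ Poincar\'e map on the invariant order interval $[m_0, M_0+1]$. For (4)(i), spatial homogeneity of $a,b$ reduces any positive entire solution to a bounded uniformly positive solution of the scalar ODE $U'=U(a(t)-b(t)U)$, which is unique, and stability in the PDE follows by considering $w=u-u^*$, applying the maximum principle, and using (H3) to absorb the chemotactic cross-terms into the logistic dissipation.

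\textbf{Small chemotaxis case (4)(ii).} I expect this to be the main obstacle. The strategy is a perturbation of the pure logistic case $\chi_1=\chi_2=0$, where existence and global attractivity of a unique strictly positive bounded entire solution $\tilde u$ of $u_t=u_{xx}+u(a(t,x)-b(t,x)u)$ is classical. Setting $u=\tilde u+w$ and using Green's-function bounds on $v_i$ that are linear in $\chi_i$, I would derive an estimate of the form
\begin{equation*}
\|w(t,\cdot)\|_\infty \le e^{-\delta(t-t_0)}\|w(t_0,\cdot)\|_\infty + C(\chi_1+\chi_2)\sup_{s\in[t_0,t]}\|w(s,\cdot)\|_\infty,
\end{equation*}
with $\delta>0$ coming from (H3) and independent of the (small) $\chi_i$; choosing $\chi_i^*$ so that $C(\chi_1^*+\chi_2^*)<1$ then yields both uniqueness of the strictly positive entire solution and its global exponential attractivity. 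The delicate point is keeping $\delta$ bounded away from $0$ uniformly in $x$ despite the spatial dependence of $a,b$, which is precisely why the stronger hypothesis (H3) is required here rather than (H1) or (H2).
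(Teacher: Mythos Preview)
The paper does not prove Theorem~\ref{half-line-thm} at all: every part is a citation to the companion paper \cite{BaoShen1} (Theorems~1.1--1.3 and Lemma~2.3 there), and in the present paper the theorem functions purely as a toolbox for the free-boundary analysis. So there is no ``paper's own proof'' to compare against beyond the pointer to \cite{BaoShen1}.

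That said, your outline for (1)--(3) is in line with how such results are typically obtained (and with what one can infer from the way the present paper uses Lemmas~\ref{apriori-estimate-lm1}--\ref{apriori-estimate-lm2} and Theorem~\ref{half-line-thm} itself): rewrite the chemotaxis term via $v_{i,xx}=\lambda_i v_i-\mu_i u$, control $\chi_2\lambda_2 v_2-\chi_1\lambda_1 v_1$ by $M\|u\|_\infty$ as in Lemma~\ref{apriori-estimate-lm1} to get the $M_0$ barrier under (H1), and use compactness on time-shifts for entire solutions. One quibble on (2): a Harnack inequality is not the natural device for a \emph{uniform} lower floor on the unbounded half-line; the usual route is an ODE comparison from the spatially constant subsolution $\underline U(t)$ solving $\underline U'=\underline U(a_{\inf}-\chi_1\mu_1 M_0-(b_{\sup}-\chi_1\mu_1+\chi_2\mu_2)\underline U)$ with $\underline U(t_0)=\inf u_0$, which is how $m_0$ in \eqref{m0-eq} arises under (H2).

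There is, however, a genuine gap in your (4)(i). You assert that ``spatial homogeneity of $a,b$ reduces any positive entire solution to a bounded uniformly positive solution of the scalar ODE $U'=U(a(t)-b(t)U)$.'' This is not automatic: even when $a,b$ are independent of $x$, an entire solution $u^*(t,x)$ of the PDE is not \emph{a priori} spatially constant, and there is no direct mechanism (symmetry, Liouville, or otherwise) that forces it to be. The correct logical order is the reverse of what you wrote: first establish the global stability estimate \eqref{Eq-asymptotic-0} for the explicit spatially constant entire solution $u^*(t)$ (this is where (H3) enters, via an $L^\infty$ contraction for the difference using the bound $|\chi_1\lambda_1 v_1-\chi_2\lambda_2 v_2|\le K\|u\|_\infty$), and \emph{then} deduce uniqueness among strictly positive entire solutions by the standard shift argument (any other entire solution is attracted to $u^*$, but is also time-translation invariant as an entire solution, hence equals $u^*$). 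Your sketch for (4)(ii) has the right shape, but note that the exponential rate $\delta$ must come from the logistic gap $b_{\inf}-\chi_1\mu_1+\chi_2\mu_2-K>0$ guaranteed by (H3), not from spectral properties of the linearization at $\tilde u$; you should make that explicit.
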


We remark that the above theorem applies to all the limit equations of \eqref{half-line-eq1}. To be more precise, let
$$
H(a,b)={\rm cl}\{ (a(t+\cdot,\cdot),b(t+\cdot,\cdot))|t\in\RR\}
$$
with open compact topology, where the closure is taken under the open compact topology.
For any $(\tilde a,\tilde b)\in H(a,b)$, consider
\begin{equation}
\label{half-line-eq2}
\begin{cases}
u_t = u_{xx} -\chi_1  (u  v_{1,x})_x +\chi_2(u v_{2,x})_x+ u(\tilde a(t,x) -\tilde  b(t,x)u),\quad x\in (0,\infty)
\cr
 0 = v_{1,xx} - \lambda_1v_1 + \mu_1u,  \quad x\in (0,\infty)\cr
 0=v_{2,xx}-\lambda_2 v_2+\mu_2 u,  \quad x\in (0,\infty)\cr
u_x(t,0)=v_{1,x}(t,0)=v_{2,x}(t,0)=0.
\end{cases}
\end{equation}
Then Theorem \ref{half-line-thm} also holds for \eqref{half-line-eq2}.

The main results of the current paper are stated in the following theorems.
The first theorem is on the global existence of nonnegative solutions of \eqref{one-free-boundary-eq}.

 \begin{thm}[Global existence]
 \label{free-boundary-thm1}
  If {\bf (H1)} holds, then for any $t_0\in\RR$, and any $h_0>0$ and any function $u_0(x)$ on $[0,h_0]$ satisfying
  \begin{equation}
  \label{initial-cond-eq}
  u_0\in C^2[0,h_0],\quad u_0(x)\ge 0\,\, {\rm for}\,\, x\in [0,h_0],\quad {\rm and}\,\,  u_0'(0)=0,\,\,  u_0(h_0)=0,
  \end{equation}
   \eqref{one-free-boundary-eq} has a unique globally defined solution $(u(t,x;t_0,u_0,h_0)$, $v_1(t,x;t_0,u_0,h_0)$, $v_2(t,x;t_0,u_0,h_0)$, $h(t;t_0,u_0,h_0))$
with $u(t_0,x;t_0,u_0,h_0)=u_0(x)$ and $h(t_0;t_0,u_0,h_0)=h_0$. Moreover,
\begin{equation}\label{thm1-h-bound}
{ 0\leq h'(t) \leq 2\nu M_1 C_0},
\end{equation}
 \begin{equation}
 \label{thm1-eq1}
0\le u(t,x;t_0,u_0,h_0)\le  \max\{\|u_0\|_\infty, M_0\}\quad \forall \,\, t\in [t_0,\infty), \,\, x\in [0,h(t;t_0,u_0,h_0)),
\end{equation}
and
\begin{equation}
\label{thm1-eq2}
\limsup_{t\to\infty} \sup_{x\in [0,h(t;t_0,u_0,h_0))}u(t_0+t,x;t_0,u_0,h_0)\le M_0,
\end{equation}
where $M_1$ is a big enough constant and $C_0 = \max\{\|u_0\|_{\infty}, M_0\}$.
\end{thm}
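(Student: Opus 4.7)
My plan is to follow the standard Du--Lin free-boundary strategy adapted to the chemoattraction--repulsion coupling: first prove local existence and uniqueness by straightening the moving boundary and applying a contraction-mapping argument; then derive the three a priori bounds in the statement; and finally extend the local solution globally by a standard continuation argument. For local existence I would introduce $y = x/h(t)$, which maps $(0,h(t))$ to the fixed interval $(0,1)$. For each smooth profile $u$, the two elliptic problems for $v_1, v_2$ with homogeneous Neumann data at both endpoints are uniquely solvable and depend linearly and continuously on $u$ via the Green's function for $-\partial_{xx} + \lambda_i$ on $[0,h]$; this defines bounded linear operators $u \mapsto v_i[u]$. Inserting these into the transformed $u$-equation (now quasilinear on $(0,1)$) together with the ODE $h'(t) = -\nu h(t)^{-1} u_y(t,1)$, I would run a Schauder-type fixed point in $C^{(1+\alpha)/2,1+\alpha}\times C^{1+\alpha/2}$ and obtain a unique local classical solution on some $[t_0,t_0+\tau]$; the compatibility conditions at $t=t_0$ are supplied by \eqref{initial-cond-eq}.

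For the a priori bounds, the lower bound $h'(t)\ge 0$ is immediate from the Hopf boundary-point lemma: $u\ge 0$ with $u(t,h(t))=0$ forces $u_x(t,h(t))\le 0$. For the upper bound in \eqref{thm1-h-bound} I would construct a local barrier of the form $w(t,x) = C_0\bigl[2M_1(h(t)-x) - M_1^2(h(t)-x)^2\bigr]$ on the strip $\{h(t)-M_1^{-1}\le x\le h(t)\}$. Rewriting the $u$-equation in nondivergence form via $v_{i,xx} = \lambda_i v_i - \mu_i u$ as
\[
u_t = u_{xx} + (\chi_2 v_{2,x} - \chi_1 v_{1,x})u_x + u\bigl[a - bu + (\chi_1\mu_1 - \chi_2\mu_2)u - \chi_1\lambda_1 v_1 + \chi_2\lambda_2 v_2\bigr],
\]
and using the pointwise bounds on $v_i, v_{i,x}$ in terms of $\|u(t,\cdot)\|_\infty$ coming from the elliptic representation, one chooses $M_1$ large (depending only on $C_0$ and the data) so that $w$ is a classical supersolution dominating $u$ on the parabolic boundary of the strip; the comparison principle then gives $u\le w$ in the strip, and differentiating at $x=h(t)$ yields \eqref{thm1-h-bound}. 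The pointwise upper bound \eqref{thm1-eq1} and the asymptotic bound \eqref{thm1-eq2} are obtained exactly as in Theorem \ref{half-line-thm}(1) and \cite[Lemma 2.3]{BaoShen1}: after the same nondivergence rewriting, comparison with the ODE $\tilde u' = \tilde u\bigl(a_{\sup} - (b_{\inf} + \chi_2\mu_2 - \chi_1\mu_1 - M)\tilde u\bigr)$, whose leading coefficient is positive by \textbf{(H1)}, gives $u\le \max\{\|u_0\|_\infty, M_0\}$ for all time and $\limsup_t u\le M_0$.

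Once these a priori bounds are in place, global existence follows by the standard continuation argument: the local solution can be extended as long as $\|u(t,\cdot)\|_\infty$, $h(t)$, and $h'(t)$ remain bounded and $u$ stays Hölder continuous, all of which are guaranteed by \eqref{thm1-h-bound}--\eqref{thm1-eq1} together with parabolic Schauder estimates on each compact time subinterval. The main technical obstacle is the coupling through $v_1, v_2$: even though the $v_i$ are instantaneously determined by $u$, the drift and the effective lower-order reaction both grow with $\|u\|_\infty$, so to close the maximum-principle argument one needs sharp $L^\infty$ estimates for $v_i$ and $v_{i,x}$ on the moving interval $[0,h(t)]$ that are uniform in $h(t)$. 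This is precisely where the constant $M$ in \eqref{m-eq} and hypothesis \textbf{(H1)} are indispensable, and where the Green's-function estimates developed in \cite{BaoShen1} for the half-line problem must be carefully adapted to the bounded moving interval.
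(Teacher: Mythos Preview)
Your proposal is correct and follows the same overall architecture as the paper: local existence via straightening the free boundary and a fixed-point argument, the barrier $w(t,x)=C_0[2M_1(h(t)-x)-M_1^2(h(t)-x)^2]$ for the bound on $h'$, a comparison argument for the $L^\infty$ bound on $u$, and a standard continuation to $T_{\max}=\infty$. The differences are in two technical choices. First, for local existence the paper uses the Chen--Friedman localized straightening $x=y+\zeta(y)(h(t)-h_0)$ with a cutoff $\zeta$ supported near $h_0$, and closes via the \emph{contraction} mapping theorem (which yields uniqueness directly), rather than your global rescaling $y=x/h(t)$ with a Schauder-type fixed point. Second, for \eqref{thm1-eq1} the paper does not compare with the scalar ODE: because the effective zero-order term involves $\chi_2\lambda_2 v_2-\chi_1\lambda_1 v_1\le M\|u(t,\cdot)\|_\infty$, a spatially constant ODE supersolution $\tilde u(t)$ is only a supersolution \emph{once} you already know $\|u(t,\cdot)\|_\infty\le\tilde u(t)$, so the argument is circular without a bootstrap. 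The paper circumvents this by a separate Schauder fixed point on the closed convex set $\{0\le u\le C_0\}$ (its Lemma~3.3). Your ODE route can be rescued by the standard $\epsilon$-perturbation/continuity trick (compare with $\tilde u_\epsilon(0)=\|u_0\|_\infty+\epsilon$ and send $\epsilon\downarrow 0$), and is closer in spirit to \cite{BaoShen1}; the paper's fixed-point approach avoids that bootstrap at the cost of a second fixed-point argument. The barrier construction for $h'$ and the continuation step are identical.
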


\smallskip

Assume {\bf (H1)}. For any given $t_0\in\RR$,  and any given $h_0>0$ and  $u_0(\cdot)$ satisfying
\eqref{initial-cond-eq}, by the nonnegativity of $u(t,x;t_0,u_0,h_0)$, $h^{'}(t;t_0,u_0,h_0)\ge 0$ for all $t>t_0$. Hence
$\lim_{t\to\infty}h(t;t_0,u_0,h_0)$ exists. Put
$$
h_\infty(t_0,u_0,h_0)=\lim_{t\to\infty} h(t;t_0,u_0,h_0).
$$
We say {\it vanishing} occurs if $h_\infty(t_0,u_0,h_0)<\infty$ and
$$
\lim_{t\to\infty}\|u(t,\cdot;t_0,u_0,h_0)\|_{C([0,h(t;t_0,u_0,h_0)])}=0.
$$
We say {\it spreading} occurs if $h_\infty(t_0,u_0,h_0)=\infty$ and
for any $L>0$, $$\liminf_{t\to\infty} \inf_{0\le x \le L} u(t,x;u_0,h_0)>0.$$

 For given $l>0$,  consider the following linear equation,
\begin{equation}\label{Eq-PLE1}
\begin{cases}
 v_t = v_{xx} + a(t,x) v, \quad 0<x<l\cr
 v_x(t,0) = v(t,l) =0.
\end{cases}
\end{equation}
Let $[\lambda_{\min}(a,l),\lambda_{\max}(a,l)]$ be the principal spectrum interval of \eqref{Eq-PLE1}
(see Definition \ref{spectrum-def1}). Let $l^*>0$ be such that
$\lambda_{\min}(a,l)>0$ for $ l>l^*$ and $\lambda_{\min}(a,l^*)=0$ (see \eqref{l-star-eq} for the existence and uniqueness
of $l^*$).

Our second theorem is about the spreading and vanishing dichotomy scenario in \eqref{one-free-boundary-eq}.

\begin{thm}[Spreading-vanishing dichotomy]
\label{free-boundary-thm2}
Assume that {\bf (H1)} holds.
 For any given $t_0\in\RR$, and $h_0>0$ and $u_0(\cdot)$ satisfying \eqref{initial-cond-eq}, we have that
either

\smallskip

\noindent   (i) vanishing occurs and $h_\infty(t_0,u_0,h_0)\le l^*$; or

\smallskip

\noindent  (ii) spreading occurs.
\end{thm}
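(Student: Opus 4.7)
The plan is to follow the Du--Lin dichotomy scheme for free boundary problems of Fisher--KPP type, adapted to the present attraction-repulsion system using the persistence and positive-entire-solution results from Theorem \ref{half-line-thm} and the principal eigenvalue threshold $l^*$ attached to \eqref{Eq-PLE1}. Since $h'(t)\ge 0$ by Theorem \ref{free-boundary-thm1}, the limit $h_\infty=h_\infty(t_0,u_0,h_0)\in(h_0,\infty]$ always exists, and the dichotomy reduces to analyzing the two mutually exclusive alternatives $h_\infty<\infty$ and $h_\infty=\infty$.

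\textbf{Case $h_\infty<\infty$.} I would first establish vanishing and then $h_\infty\le l^*$. For vanishing, straighten the free boundary via $y=x/h(t)$ so that \eqref{one-free-boundary-eq} becomes a parabolic problem on the fixed interval $[0,1]$; the elliptic bounds $\|v_{i,x}\|_\infty+\|v_{i,xx}\|_\infty\le C\|u\|_\infty$ coming from $\p_{xx}v_i-\lambda_iv_i=-\mu_iu$, combined with \eqref{thm1-h-bound}--\eqref{thm1-eq1}, keep all coefficients uniformly bounded so that standard parabolic Schauder estimates give uniform $C^{1+\alpha/2,2+\alpha}$ bounds on the transformed $u$. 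Since $\int_{t_0}^{\infty}h'(t)\,dt=h_\infty-h_0<\infty$ together with $h'(t)=-\nu u_x(t,h(t))$, a compactness/time-translation argument extracts $t_n\to\infty$ along which $u(t_n+\cdot,\cdot)$ converges to a profile on $[0,h_\infty]$ which is stationary for some limit equation, satisfies Neumann data at $0$, Dirichlet data at $h_\infty$, and the over-determined zero-flux condition $u_x(\cdot,h_\infty)=0$; this forces the limit to be identically zero, i.e.\ vanishing occurs. For $h_\infty\le l^*$, suppose $h_\infty>l^*$, fix $l_1\in(l^*,h_\infty)$ with $\lambda_{\min}(a,l_1)>0$ and principal eigenfunction $\vp$; then {\bf (H1)} is precisely the structural condition needed to make $\underline u(t,x)=\delta\vp(x)$, for small $\delta>0$, a sub-solution of the $u$-equation on $[0,l_1]$ once the chemotaxis cross-diffusion has been absorbed into the logistic damping via the elliptic bounds on $v_{i,x}$. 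Comparison then yields $u(t,\cdot)\ge\delta\vp$ on $[0,l_1]$ for all large $t$, contradicting the vanishing just established.

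\textbf{Case $h_\infty=\infty$.} Fix $L>0$ arbitrarily and pick $T_1$ with $h(T_1)>l^*+L+1$; let $\vp$ be a principal eigenfunction of \eqref{Eq-PLE1} on $[0,l^*+L+1]$, corresponding to $\lambda_{\min}(a,l^*+L+1)>0$. By the strong parabolic maximum principle and the Hopf boundary lemma, $u(T_1,\cdot)>0$ on $[0,h(T_1))$, so there exists $\eta>0$ with $u(T_1,\cdot)\ge\eta\vp$ on $[0,l^*+L+1]$. Applying the same sub-solution mechanism as in the previous case on the enlarged fixed interval, together with the persistence statement of Theorem \ref{half-line-thm}(2)(i) for the associated limit equations, I obtain $\liminf_{t\to\infty}\inf_{0\le x\le L}u(t,x;t_0,u_0,h_0)>0$, so spreading occurs.

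The main obstacle is that a naive comparison principle is not available for \eqref{one-free-boundary-eq} because the effective advection $\chi_1v_{1,x}-\chi_2v_{2,x}$ and the term $(uv_{i,x})_x$ are both nonlocal in $u$ through the elliptic equations for $v_1,v_2$. The resolution, already at the heart of \cite{BaoShen1}, is to expand $(uv_{i,x})_x=u_xv_{i,x}+u(\lambda_iv_i-\mu_iu)$ and use the elliptic bounds on $v_i$, $v_{i,x}$ in terms of $\|u\|_\infty$, so that hypothesis {\bf (H1)} (through the constants $M$ and $M_0$ in \eqref{m-eq} and \eqref{M0-eq}) guarantees that the chemotactic feedback is dominated by the logistic damping $-b(t,x)u^2$ plus the linear growth $a(t,x)u$. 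This reduces both the over-determined limit argument in Case $h_\infty<\infty$ and the sub-solution construction in Case $h_\infty=\infty$ to classical comparison against the linear scalar equation \eqref{Eq-PLE1}.
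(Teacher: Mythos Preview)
Your treatment of $h_\infty<\infty$ is essentially the paper's: first force $h'\to 0$ (your over-determined boundary datum $u_x(\cdot,h_\infty)=0$ is the same Hopf-lemma contradiction), then obtain vanishing, then rule out $h_\infty>l^*$ by comparison on a fixed subinterval. One imprecision worth flagging: it is \emph{not} {\bf (H1)} that makes $\delta\varphi$ a sub-solution on $[0,l_1]$. Hypothesis {\bf (H1)} controls only the zero-order chemotactic feedback $\chi_2\lambda_2v_2-\chi_1\lambda_1v_1$ (Lemma~\ref{apriori-estimate-lm1}); it says nothing about the first-order advection $\beta=-\chi_1v_{1,x}+\chi_2v_{2,x}$, which enters the sub-solution inequality through $\beta\,\delta\varphi_x$ and cannot be absorbed into the logistic damping. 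What makes $\beta$ small in this case is the vanishing you just proved, via the bound $|\beta|\le M_{\chi_1,\chi_2}\|u\|_\infty$ of Lemma~\ref{thm2-lm1}, and that is precisely how the paper runs the comparison (through Lemma~\ref{fisher-kpp-lm1}). Note also that for genuinely time-dependent $a(t,x)$ there is no single eigenfunction $\varphi$; one must work with the principal spectrum bundle of Lemma~\ref{spectrum-lm1}, which is why the paper compares against the full nonautonomous Fisher--KPP problem \eqref{Cutoff-Eq} rather than a fixed linear profile.

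The spreading case $h_\infty=\infty$ contains a genuine gap. You propose the same $\delta\varphi$-type sub-solution on a large fixed interval, but now there is no vanishing to make $\|u\|_\infty$ small, so the advection $\beta$ is a priori of order $M_{\chi_1,\chi_2}M_0$---a fixed constant \emph{not} governed by {\bf (H1)}---and $\delta\varphi$ need not be a sub-solution for any $\delta>0$. Invoking Theorem~\ref{half-line-thm}(2)(i) does not rescue this, since that persistence result requires $\inf_{x\ge 0}u_0(x)>0$, which fails for every free-boundary profile. The paper closes this gap with two ingredients your outline is missing: (a) the localization estimate Lemma~\ref{thm2-lm2}, which bounds $v_{i,x}$ on $[0,R/2]$ by $C_R\|u\|_{C([0,R])}$ plus a tail $\varepsilon_R\to 0$, so that the advection is small on $[0,m]$ whenever $u$ is small on $[0,2m]$; and (b) a Harnack-type splitting (Claims~1--3 in the proof) that treats separately the times when $\sup_{[0,2m]}u\ge\epsilon$ (a compactness argument forces $\inf_{[0,m]}u\ge\delta_\epsilon$) and the excursions when $\sup_{[0,2m]}u\le\epsilon$ (where (a) makes the advection locally small and one compares with \eqref{aux-thm2-eq2}). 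Without this localization-plus-splitting machinery, the sub-solution step in your spreading argument does not go through.
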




\smallskip
For given $t_0\in\RR$,  and  $h_0>0$ and   $u_0(\cdot)$ satisfying
\eqref{initial-cond-eq}, if spreading occurs, it is interesting to know whether {\it local uniform persistence} occurs in the sense that
there is a positive constant $\tilde m_0$ independent of the initial data such that for any $L>0$,
$$\liminf_{t\to\infty}\inf_{0\le x\le L}u(t,x;t_0,u_0,h_0)\ge \tilde m_0,
$$
   and whether {\it local uniform convergence} occurs in the sense that  $\lim_{t\to\infty} u(t,x;t_0,u_0,h_0)$ exists locally uniformly. We have the following theorem
along this direction.

\begin{thm}[Persistence and convergence]
\label{free-boundary-thm3}
Assume that {\bf (H1)} holds and that $h_0>0$ and $u_0(\cdot)$ satisfy \eqref{initial-cond-eq}.

\smallskip

\noindent (i) (Local uniform persistence) For any given $t_0\in\RR$, if  $h_\infty(t_0,u_0,h_0)=\infty$ and {\bf (H2)} holds, then
for any $L>0$, $$\liminf_{t\to\infty} \inf_{0\le x \le L} u(t,x;t_0,u_0,h_0)>m_0,$$
where $m_0$ is as in \eqref{m0-eq}.

\smallskip
\noindent (ii) (Local uniform convergence)  Assume that {\bf (H3)} holds,
and  that for any $(\tilde a,\tilde b)\in H(a,b)$, \eqref{half-line-eq2}
has a  unique strictly positive entire solution $(u^*(t,x;\tilde a,\tilde b),v_1^*(t,x;\tilde a,\tilde b)$, $v_2^*(t,x;\tilde a,\tilde b))$.
Then for  any given $t_0\in\RR$, if
  $h_\infty(t_0,u_0,h_0)=\infty$, there are $\chi_1^* > 0, \chi_2^* > 0$ such that to any {$0\leq \chi_1\leq \chi_1^*$, $0\leq \chi_2\leq \chi_2^*$,}
for any $L>0$,
\begin{equation}\label{FBP-stability}
\lim_{t\to\infty} \sup_{0\le x\le L} |u(t,x;t_0,u_0,h_0)-u^*(t,x;a,b)|=0.
\end{equation}
\smallskip

\noindent (iii) (Local uniform convergence)
Assume that {\bf (H3)} holds, and that $a(t,x)\equiv a(t)$ and $b(t,x)\equiv b(t)$. Then for any given  $t_0\in\RR$, if $h_\infty(t_0,u_0,h_0)=\infty$, then
for any $L>0$,
$$
\lim_{t\to\infty} \sup_{0\le x\le L} |u(t,x;u_0,h_0)-u^*(t)|=0,
$$
where $u^*(t)$ is the unique strictly  positive entire solution of the ODE
\begin{equation}
\label{fisher-kpp-ode}
u^{'}=u(a(t)-b(t)u)
\end{equation}
(see \cite[Lemma 2.5]{Issa-Shen-2017} for the existence and uniqueness of strictly  positive entire solutions of
\eqref{fisher-kpp-ode}).
\end{thm}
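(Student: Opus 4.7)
The plan is to sandwich $u(t,x;t_0,u_0,h_0)$ on each fixed interval $[0,L]$ between solutions that both converge to the ODE equilibrium $u^*(t)$.

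\emph{Upper bound.} Because $a(t,x)\equiv a(t)$ and $b(t,x)\equiv b(t)$, the spatially constant function $\bar u(t)$ solving $\bar u'(t)=\bar u(a(t)-b(t)\bar u)$ with $\bar u(t_0)=\max\{\|u_0\|_\infty,M_0\}$ has vanishing $x$-derivatives, so with the associated constants $\bar v_i=\mu_i\bar u/\lambda_i$ the chemotaxis terms vanish identically. Thus $\bar u$ satisfies the PDE, the Neumann condition at $x=0$, and strictly exceeds $u(t,h(t))=0$ at the free boundary. The comparison argument used in the proof of Theorem \ref{free-boundary-thm1} then yields $u(t,x;t_0,u_0,h_0)\le\bar u(t)$ on $[0,h(t)]$, and $\bar u(t)\to u^*(t)$ as $t\to\infty$ by \cite[Lemma 2.5]{Issa-Shen-2017}.

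\emph{Lower bound via compactness.} I would argue by contradiction: suppose there exist $L>0$, $\epsilon_0>0$ and sequences $t_n\to\infty$, $x_n\in[0,L]$ with $u^*(t_n)-u(t_n,x_n)\ge\epsilon_0$. Since $h_\infty=\infty$, $h(t_n+s)\to\infty$ for every fixed $s$. Using the uniform bound \eqref{thm1-eq1}, parabolic Schauder estimates up to the Neumann boundary $x=0$, elliptic estimates for $v_1,v_2$, and compactness of $H(a,b)$, pass to a subsequence along which $u_n(s,x):=u(t_n+s,x)\to\tilde u(s,x)$ in $C^{2,1}_{\rm loc}(\mathbb{R}\times[0,\infty))$, $(a(\cdot+t_n),b(\cdot+t_n))\to(\tilde a,\tilde b)\in H(a,b)$, $u^*(\cdot+t_n)\to\tilde u^*$ (the entire ODE solution for $(\tilde a,\tilde b)$, which depends only on $s$), and $x_n\to x^*\in[0,L]$. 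The limit $\tilde u$ is a bounded nonnegative entire solution of \eqref{half-line-eq2}, and inherits, from the spreading alternative of Theorem \ref{free-boundary-thm2} (which must occur since $h_\infty=\infty$), the locally uniform lower bound $\tilde u(s,x)\ge m_{L'}>0$ for $x\in[0,L']$, $s\in\mathbb{R}$, for every $L'>0$.

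\emph{Identification and main obstacle.} Since $(\tilde a,\tilde b)$ depend only on $s$, the spatially constant $\tilde u^*(s)$ is itself a strictly positive entire solution of \eqref{half-line-eq2}, and by Theorem \ref{half-line-thm}(4)(i) under (H3) it is the unique strictly positive entire solution. Combining $\tilde u\le\tilde u^*$ (inherited from the upper bound) with the local-in-$x$ lower bound, the conclusion $\tilde u\equiv\tilde u^*$ gives $\tilde u(0,x^*)=\tilde u^*(0)=\lim_n u^*(t_n)$, contradicting $u^*(t_n)-u(t_n,x_n)\ge\epsilon_0$. The main obstacle lies precisely in this last identification step: $\tilde u$ is bounded below by a positive constant only locally in $x$, not uniformly on $\mathbb{R}^+$, so the uniqueness furnished by Theorem \ref{half-line-thm}(4)(i) does not apply verbatim. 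I would resolve this by a squeezing argument---approximating $\tilde u(s_0,\cdot)$ from below on $\mathbb{R}^+$ by strictly positive initial data, applying the half-line stability from Theorem \ref{half-line-thm}(4)(i) to each approximation, and letting the approximant tend to $\tilde u(s_0,\cdot)$---in the spirit of the proof of Theorem \ref{half-line-thm}(4)(i) carried out in \cite{BaoShen1}.
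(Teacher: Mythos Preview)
Your proposal addresses only part (iii) and leaves (i) and (ii) entirely untouched; since (iii) in the paper is derived as a corollary of (ii), which in turn rests on (i), this omission is not cosmetic.

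Restricting attention to (iii), your compactness/contradiction scheme is essentially the paper's argument for (ii), and you have correctly put your finger on the key difficulty: the limit $\tilde u$ extracted along $t_n\to\infty$ is a priori only \emph{locally} bounded below, so the half-line uniqueness result (Theorem~\ref{half-line-thm}(4)(i)), which requires $\inf_{t\in\RR,\,x\in\RR^+}\tilde u(t,x)>0$, does not apply. The paper does not resolve this via a squeezing argument. Instead it first proves part (i): through a moving-window persistence argument (Lemma~\ref{thm3-lm1} and Claims 1--4 in the proof of (i)) it produces a single constant $\tilde m_0>0$, \emph{independent of $L$}, such that $\liminf_{t\to\infty}\inf_{0\le x\le L}u(t,x;u_0,h_0)\ge\tilde m_0$ for every $L$. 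This uniformity is the whole point: once it is in hand, any subsequential limit $\tilde u$ automatically satisfies $\inf_{t\in\RR,\,x\in\RR^+}\tilde u(t,x)\ge\tilde m_0>0$, so it is a strictly positive entire solution of the half-line system and uniqueness applies directly, yielding the contradiction.

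Your proposed workaround---approximating $\tilde u(s_0,\cdot)$ from below by strictly positive data and invoking the stability in Theorem~\ref{half-line-thm}(4)(i)---is not obviously viable: the attraction--repulsion system has no comparison principle (the $v_i$ depend on $u$), so there is no mechanism by which the half-line solution launched from such an approximant stays below $\tilde u$ for later times. The same lack of comparison also makes your upper-bound step more delicate than stated; the paper obtains $\limsup_{t\to\infty}\|u(t,\cdot)\|_\infty\le M_0$ not by comparing with the ODE solution $\bar u(t)$ directly, but via Lemma~\ref{apriori-estimate-lm2} and the fixed-point/supersolution machinery behind it. In short, the missing ingredient is precisely part (i), and the route to it is the uniform-in-$L$ persistence argument, not squeezing.
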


We conclude the introduction with the following remarks.

\smallskip

First,  in \cite{Salako2017Global}, an attraction-repulsion chemotaxis system with
constant logistic source $u(a-bu)$  on the whole space is studied. Assuming  $b>\chi_1\mu_1-\chi_2\mu_2+M$, the authors proved the
existence and uniqueness
of globally defined solutions with nonnegative, bounded, and uniformly continuous initial functions (see \cite[Theorem A]{Salako2017Global}).
Theorem \ref{free-boundary-thm1} and Theorem \ref{half-line-thm}(1) are the counterparts of   \cite[Theorem A]{Salako2017Global}
for attraction-repulsion chemotaxis systems with
time dependent  logistic source on environments with a free boundary and on a half space.
Furthermore, the authors of \cite{Salako2017Global} proved the existence, uniqueness, and stability of a strictly positive entire solution
under the assumption $b>\chi_1\mu_1-\chi_2\mu_2+K$ (see \cite[Theorem B]{Salako2017Global}).
Theorem \ref{free-boundary-thm3}(ii), (iii) and Theorem \ref{half-line-thm} (3), (4) are the counterpart
of  \cite[Theorem B]{Salako2017Global} for attraction-repulsion chemotaxis systems with
time dependent  logistic source on environments with a free boundary and on a half space.
Note that the conditions in Theorem \ref{half-line-thm}(4)  (ii) imply the conditions in   Theorem \ref{free-boundary-thm3}(ii).

Second, we became aware of a paper by Zhang et al. \cite{Zhang2018Afree} that addresses a similar models to ours after finishing the paper. They investigated the problem with the equations for the chemoattractant and chemorepulsion being on the whole half space $0<x<\infty$
 and the birth and death damping coefficients in logistic term being constants.

Third, Theorem \ref{free-boundary-thm2}  reveals a spreading-vanishing dichotomy scenario for  attraction-repulsion chemotaxis systems with logistic source and with a free boundary.
It is seen that, as long as the species spreads into a region with size greater than $l^*$, which only depends on $a(\cdot,\cdot)$,  at some finite time, then the species will eventually  spread into the whole space.
  The spreading-vanishing dichotomy scenario for \eqref{one-free-boundary-eq}  observed in Theorem  \ref{free-boundary-thm2}
 is a strikingly different  spreading scenario from the following attraction-repulsion chemotaxis system with logistic source on the whole space,
\begin{equation}
\label{whole-space-eq1}
\begin{cases}
u_t = u_{xx} -\chi_1  (u  v_{1,x})_x +\chi_2(u v_{2,x})_x+ u(a(t,x) - b(t,x)u),\quad x\in \R
\cr
 0 = v_{1,xx} - \lambda_1v_1 + \mu_1u,  \quad x\in \R\cr
 0=v_{2,xx}-\lambda_2 v_2+\mu_2 u,  \quad x\in \R,
\end{cases}
\end{equation}
where $a(t,x)$ and $b(t,x)$ are bounded $C^1$ functions on $\RR\times\RR$ with $a_{\inf}:=\inf_{(t,x)\in\RR\times\RR}a(t,x)>0$ and $b_{\inf}:=\inf_{(t,x)\in\RR\times\RR}b(t,x)>0$.
Assuming {\bf (H1)}, it can be proved that spreading always occurs in \eqref{whole-space-eq1}.
 In fact, it can be proved that, for any $0<c<2\sqrt {a_{\inf}}$ and $u_0\in C_{\rm unif}^b(\R)$ with $u_0\ge 0$ and
$\{x\,|\, u_0(x)>0\}$ being bounded and nonempty, the unique globally defined solution $(u(t,x;u_0), v_1(t,x;u_0),v_2(t,x;u_0))$
of \eqref{whole-space-eq1}  with
$u(0,x;u_0)=u_0(x)$ satisfies
$$
\liminf_{t\to\infty}\inf_{|x|\le ct}u(t,x;u_0)>0
$$
(see \cite[Theorem 1.1]{SaShXu} for the proof in the case that $a(t)$ and $b(t)$ are constants and $\chi_2=0$, the general case can be proved
by the similar arguments of \cite[Theorem 1.1]{SaShXu}).

\smallskip

Fourth, in the absence of chemotaxis (i.e., $\chi_1=\chi_2=0$),  \eqref{one-free-boundary-eq} reduces to
 \begin{equation}\label{one-free-boundary-no-chemotaxis-eq}
\begin{cases}
u_t = u_{xx} + u(a(t,x) - b(t,x)u), \quad 0<x<h(t)
\\
h^{'}=-\nu u_x(t,h(t))
\\
u_x(t,0) = 0
\\
 u(t,h(t)) = 0
 \\
 h(0) = h_0,\quad u(x,0) = u_0(x),\quad  0\leq x\leq h_0,
\end{cases}
\end{equation}
which was first introduced by Du and Lin in \cite{DuLi}
to understand the spreading of species.
It is proved in \cite{DuLi} that \eqref{one-free-boundary-no-chemotaxis-eq} with  $a(t,x)\equiv a$ and  $b(t,x)\equiv b$
 exhibits the following spreading-vanishing dichotomy:  for any given $h_0>0$ and
  either vanishing occurs (i.e. $\lim_{t\to\infty}h(t;u_0,h_0)\le l^*$, where $l^*$ is as in Theorem \ref{free-boundary-thm1}, and $\lim_{t\to\infty}u(t,x;u_0,h_0)=0$) or
spreading occurs (i.e. $\lim_{t\to\infty}h(t;u_0,h_0)=\infty$ and $\lim_{t\to\infty}u(t,x$; $u_0,h_0)=a/b$ locally uniformly in $x\in \RR^+$). The above spreading-vanishing dichotomy has also been extended to the cases with space periodic logistic source, time periodic logistic source, and time almost periodic logistic source, etc. (see \cite{du2013diffusive}, \cite{du2013pulsating}, \cite{Li2016duffusive}, \cite{Li2016duffusive2}, etc.).
Letting $\chi_1=\chi_2=0$, Theorems  \ref{free-boundary-thm2} and \ref{free-boundary-thm3} partially recover the existing results for \eqref{one-free-boundary-no-chemotaxis-eq}.

\smallskip

Finally, the techniques developed for the study of  \eqref{one-free-boundary-eq} can be modified to study the following double spreading fronts free boundary problem,
 \begin{eqnarray}\label{two-free-boundary-eq}
\begin{cases}
u_t = u_{xx} -\chi_1  (u  v_{1,x})_x  + \chi_2 (u v_{2,x})_x + u(a(t,x) - b(t,x) u),& x\in (g(t),h(t))
\\
 0 = (\partial_{xx} - \lambda_1I)v_1 + \mu_1u, & x\in  (g(t),h(t))
 \\
 0 = (\partial_{xx} - \lambda_2I)v_2 + \mu_2u, & x\in  (g(t),h(t))
 \\
g'(t) = -\nu u_x(g(t),t)\\
 h'(t) = -\nu u_x(h(t),t)
\\
u(g(t),t) = v_{1,x}(t,g(t)) = v_{2,x}(t,g(t)) = 0
\\
 u(h(t),t) = v_{1,x}(t,h(t)) = v_{2,x}(t,h(t)) = 0,
\end{cases}
\end{eqnarray}
where $a(t,x)$ and $b(t,x)$ are bounded $C^1$ functions on $\RR\times\RR$ with $a_{\inf}:=\inf_{(t,x)\in\RR\times\RR}a(t,x)>0$ and $b_{\inf}:=\inf_{(t,x)\in\RR\times\RR}b(t,x)>0$.

To be more precise,
assume {\bf (H1)}. For any given $t_0,g_0,h_0\in\RR$ with $g_0<h_0$, and given $u_0(\cdot)$ satisfying
\begin{equation}
u_0\in C^2([g_0,h_0]),\quad u_0(g_0)=u_0(h_0)=0,\quad u_0(x)\ge 0,\quad x\in (g_0,h_0),
\end{equation}
by the similar arguments as those of Theorem \ref{free-boundary-thm1}, \eqref{two-free-boundary-eq} has a unique solution
$(u(t,x;t_0,u_0,g_0,h_0)$, $v_1(t,x;t_0,u_0,g_0,h_0)$, $v_2(t,x;t_0,u_0,g_0,h_0)$, $g(t;t_0,u_0g_0,h_0)$, $h(t;t_0,u_0,g_0,h_0))$
defined on $[t_0,\infty)$ with $u(t_0,x;t_0,u_0,g_0,h_0)=u_0(x)$, $g(t_0;t_0,u_0,g_0,h_0)=g_0$, and $h(t_0;t_0,u_0,g_0,h_0)=h_0$.
By the nonnegativity of $u(t,x;t_0,u_0,g_0,h_0)$, $g^{'}(t;t_0,u_0,g_0,h_0)\le 0$ and
$h^{'}(t;t_0,u_0,g_0,h_0)\ge 0$ for $t>t_0$. Hence the limits
$$g_\infty(t_0,u_0,g_0,h_0):=\lim_{t\to\infty}g(t;t_0,u_0,g_0,h_0),\,\,
h_\infty(t_0,u_0,g_0,h_0):=\lim_{t\to\infty} h(t;t_0,u_0,g_0,h_0)$$
 exist. If no confusion occurs, we may put
 $g(t)=g(t;t_0,u_0,g_0,h_0)$, $h(t)=h(t;t_0,u_0,g_0,h_0)$, $g_\infty=g_\infty(t_0,u_0,g_0,h_0)$ and
$h_\infty=h_\infty(t_0,u_0,g_0,h_0)$.

 For given $l_-<l_+$,  consider the following parabolic equation,
\begin{equation}\label{Eq-PLE2}
\begin{cases}
 v_t = v_{xx} + a(t,x) v, \quad l_-<x<l_+
 \\
 v(t,l_-) = v(t,l_+) =0.
\end{cases}
\end{equation}
 Let $[\lambda_{\min}(a,l_-,l_+),\lambda_{\max}(a,l_-,l_+)]$ be the principal spectrum interval of \eqref{Eq-PLE2}
(see Definition \ref{spectrum-def2}). Let $l^{**}>0$ be such that
$\lambda_{\min}(a,l_-,l_+)>0$ for any $l_-<l_+$ with $ l_+-l_->l^{**}$ and $\lambda_{\min}(a,l_-,l_+)=0$ for some $l_-<l_+$ with $l_+-l_-=l^{**}$ (see \eqref{l-star-star-eq} for the existence and uniqueness
of $l^{**}$).

We have the following theorem on the spreading-vanishing dichotomy for  \eqref{two-free-boundary-eq}.

\begin{thm}(Spreading-vanishing dichotomy with the double free boundaries)
\label{free-boundary-thm4}
Assume that {\bf(H1)} holds. For any given $t_0,g_0,h_0\in\RR$ with $g_0<h_0$ and $u_0(\cdot)\in C^2([g_0,h_0])$ satisfying \eqref{two-free-boundary-eq}, we have
either

\smallskip

\noindent (i) $h_\infty - g_\infty \leq l^{**}$, and $\lim_{t\to\infty} \|u(t,\cdot;u_0,h_0)\|_{C([g(t),h(t)]}=0$;  or

\smallskip

\noindent  (ii) $h_\infty= -g_\infty =\infty$, and for any $L>0$,
$$\liminf_{t\to\infty} \inf_{|x|\le L} u(t,x;u_0,h_0,g_0)>0.$$

\noindent  Moreover, if $h_\infty= -g_\infty =\infty$,{\bf(H3)} holds, and the following system
\begin{eqnarray}\label{whole-space-eq}
\begin{cases}
u_t = u_{xx} -\chi_1  (u  v_{1,x})_x  + \chi_2 (u v_{2,x})_x + u(\tilde a(t,x) - \tilde b(t,x) u),& x\in\RR
\\
 0 = (\partial_{xx} - \lambda_1I)v_1 + \mu_1u, & x\in  \RR
 \\
 0 = (\partial_{xx} - \lambda_2I)v_2 + \mu_2u, & x\in  \RR
\end{cases}
\end{eqnarray}
 has a unique strictly positive entire solution $(u^*(t,x;\tilde a,\tilde b),v_1^*(t,x;\tilde a,\tilde b),v_2^*(t,x;\tilde a,\tilde b))$,
then for any $L>0$,
$$
\lim_{t\to\infty} \sup_{|x|\le L} |u(t,x;u_0,h_0)-u^*(t,x;a,b)|=0.
$$
In particular, if $h_\infty= -g_\infty =\infty$, {\bf (H3)} holds, and $a(t,x)\equiv a(t)$ and $b(t,x)\equiv b(t)$, then
for any $L>0$,
$$
\lim_{t\to\infty} \sup_{0\le x\le L} |u(t,x;u_0,h_0)-u^*(t)|=0,
$$
where $u^*(t)$ is the unique positive entire solution of the ODE
$$
u^{'}=u(a(t)-b(t)u).
$$
\end{thm}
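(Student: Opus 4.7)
The plan is to lift the techniques developed for the one-sided problem \eqref{one-free-boundary-eq} in Theorems \ref{free-boundary-thm1}--\ref{free-boundary-thm3} to the two-sided geometry \eqref{two-free-boundary-eq}. Global existence, the uniform bound $0\le u\le \max\{\|u_0\|_\infty,M_0\}$, the monotonicity $g'(t)\le 0\le h'(t)$, and existence of the limits $g_\infty\in[-\infty,g_0)$ and $h_\infty\in(h_0,\infty]$ are obtained as in Theorem \ref{free-boundary-thm1} after replacing the Neumann endpoint $x=0$ by a second free boundary; this is indicated in the paragraph preceding the statement.

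The dichotomy is then proved in three substeps. \emph{Step 1 (bounded length):} if $h_\infty-g_\infty<\infty$, both endpoints are finite, and the Stefan laws $h'=-\nu u_x(t,h(t))$, $g'=-\nu u_x(t,g(t))$ combined with $\int_{t_0}^\infty(h'(t)+|g'(t)|)\,dt<\infty$ control the boundary derivatives of $u$ in an averaged sense. Straightening $[g(t),h(t)]$ to a fixed reference interval and invoking parabolic Schauder estimates (as in \cite{DuLi}) gives $\|u(t,\cdot)\|_{C([g(t),h(t)])}\to 0$. The length bound $h_\infty-g_\infty\le l^{**}$ then follows by contradiction: if the length exceeded $l^{**}$, one could select $[l_-,l_+]\subset[g_\infty,h_\infty]$ with $\lambda_{\min}(a,l_-,l_+)>0$, and a subsolution built from the principal eigenfunction of \eqref{Eq-PLE2} would grow exponentially, contradicting vanishing. \emph{Step 2 (excluding the mixed scenario):} suppose, say, $g_\infty>-\infty$ while $h_\infty=\infty$; localizing the Step~1 analysis near the frozen left endpoint, combined with comparison and the persistence statement of Theorem \ref{half-line-thm}(2) applied to the half-line limit obtained by sending $h\to\infty$, should yield a contradiction via the principal eigenvalue argument on a long subinterval sitting strictly to the right of $g_\infty$. \emph{Step 3 (spreading):} once $g_\infty=-\infty$ and $h_\infty=\infty$, for every $L>0$ the interval $(g(t),h(t))$ eventually contains $[-L-1,L+1]$, and comparison from below with the solution of the Dirichlet problem on a fixed long interval $[-L_0,L_0]$ with $\lambda_{\min}(a,-L_0,L_0)>0$ gives $\liminf_{t\to\infty}\inf_{|x|\le L}u(t,x;t_0,u_0,g_0,h_0)>0$.

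For the local uniform convergence under (H3) and the uniqueness hypothesis on \eqref{whole-space-eq}, the strategy is to sandwich $u$ between a super- and subsolution built from the strictly positive entire solution $u^*(t,x;a,b)$ on $\RR$, using its uniqueness and the Lyapunov-type estimate that underpins Theorem \ref{half-line-thm}(4) (executed on $\RR$ in place of $\RR^+$); the fact that $(g(t),h(t))$ eventually contains any prescribed $[-L,L]$ closes the local uniform limit. The case $a(t,x)\equiv a(t)$, $b(t,x)\equiv b(t)$ is immediate, because the spatially constant $u^*(t)$ produced by \cite[Lemma 2.5]{Issa-Shen-2017} is automatically the unique positive bounded entire solution of \eqref{whole-space-eq}: the chemotaxis terms vanish on spatially constant profiles, since then $v_i=\mu_i u^*(t)/\lambda_i$ is itself spatially constant. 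The main obstacle will be Step~2, excluding the mixed-boundary case; neither the global eigenvalue argument of Step~1 nor a straight half-line reduction applies verbatim, and the fix must combine a localized boundary-layer estimate near the frozen side with the half-line persistence theorem of \cite{BaoShen1}.
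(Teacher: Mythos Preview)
Your overall architecture---split on whether $h_\infty-g_\infty$ is finite, rule out the mixed case, then run persistence and convergence---matches the paper. But there are two points where your plan diverges from, or does not reach, the paper's actual mechanism.

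\textbf{Step 1 (vanishing when the length stays bounded).} Your route via $\int (h'+|g'|)<\infty$, straightening, and Schauder estimates \`a la Du--Lin is a viable alternative, but it is not what the paper does. In the one-sided case (Theorem~\ref{free-boundary-thm2}(i)) the paper first shows $h'(t)\to 0$ by a compactness/equicontinuity argument, and then proves $\|u(t,\cdot)\|\to 0$ by contradiction: if a subsequential limit $u^*$ on $[0,h_\infty]$ were nontrivial, Hopf's lemma would give $u^*_x(t,h_\infty)<0$, hence $h'(t_n)\to -\nu u^*_x>0$, contradicting $h'\to 0$. The length bound $h_\infty-g_\infty\le l^{**}$ then comes from a comparison with the perturbed Fisher--KPP problem of Lemma~\ref{fisher-kpp-lm1} rather than a raw eigenfunction subsolution. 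The distinction matters because this same Hopf-limit device is exactly what resolves Step~2.

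\textbf{Step 2 (the mixed case).} This is where your proposal has a genuine gap: you flag it as the main obstacle and gesture at ``a localized boundary-layer estimate near the frozen side combined with half-line persistence,'' but you do not identify the contradiction mechanism. The paper's argument is clean and recycles the ingredients already in hand. Suppose $-\infty<g_\infty<h_\infty=\infty$. First, the same compactness argument as in Step~1 gives $g'(t)\to 0$. Second, the persistence machinery of Theorem~\ref{free-boundary-thm3}(i) (Claims~1--4), applied on interior intervals $[l_-,l_+]$ with $g_\infty<l_-$ and $l_+-l_-$ large, yields $\liminf_{t\to\infty}\inf_{[l_-,l_+]}u>0$. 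Third, pass to a limit along $t_n\to\infty$ to obtain an entire solution $u^*$ of the chemotaxis system on $(g_\infty,\infty)$ with $u^*(t,g_\infty)=0$ and $u^*>0$ in the interior. Hopf's lemma at $x=g_\infty$ gives $u^*_x(t,g_\infty)>0$, whence $g'(t+t_n)\to -\nu u^*_x(t,g_\infty)<0$, contradicting $g'(t)\to 0$. No new boundary-layer estimate or half-line reduction is needed; the point is that the frozen endpoint plays the role that $h_\infty$ played in Step~1.

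For the convergence part, the paper does not sandwich with super/subsolutions as you suggest; it argues by contradiction and compactness, producing two distinct strictly positive entire solutions of \eqref{whole-space-eq} and invoking the uniqueness hypothesis. Your approach may also work, but is not what is done here.
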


The rest of this paper is organized in the following way. In section 2, we present some preliminary lemmas to be used in the proofs of the main theorems in later sections.
We study  the local and global existence of nonnegative solutions of \eqref{one-free-boundary-eq} and prove Theorem
\ref{free-boundary-thm1} in section 3.
In section 4, we explore  the spreading-vanishing dichotomy  behaviors of \eqref{one-free-boundary-eq} and prove Theorem \ref{free-boundary-thm2}.
We consider the local uniform persistence and local uniform convergence of  \eqref{one-free-boundary-eq} and prove Theorem \ref{free-boundary-thm3}
in section 5.
 In section 6, we study spreading- vanishing dichotomy scenario in  \eqref{two-free-boundary-eq} and prove Theorem \ref{free-boundary-thm4}.

\section{Preliminary}

In this section, we present some preliminary materials to be used in the later sections.

\subsection{Principal spectral theory}

In this subsection, we recall some principal spectrum theory for linear parabolic equations.
We first recall some principal spectrum theory for the following linear parabolic equation,
\begin{equation}
\label{linear-eq1}
\begin{cases}
u_t=u_{xx}+a(t,x)u,\quad 0<x<l\cr
u_x(t,0)=u(t,l)=0,
\end{cases}
\end{equation}
where $l>0$ is a given positive number and $a\in C^1([0,\infty))$ is bounded and $a_{\inf}>0$. Let
$$X(l)=\{u\in C([0,l])\,|\, u_x(0)=u(l)=0\}$$
with norm $\|u\|_{X(l)}=\sup_{x\in [0,l]}|u(x)|$,
$$
X^+(l)=\{u\in X(l)\,|\, u(x)\ge 0\,\, {\rm for}\,\, 0\le x\le l\},
$$
and
$$
X^{++}(l)=\{u\in X(l)\,|\, u(x)>0\,\, {\rm for}\,\, 0\le x<l,\,\, u_x(l)<0\}.
$$
Let $U(t,s;a,l)$ be the evolution operator of \eqref{linear-eq1} on $X$, that is, for any
$u_0\in X(l)$,
$$(U(t,s;a,l)u_0)(x):=u(t,x;s,u_0),$$
where $u(t,x;s,u_0)$ is the solution of \eqref{linear-eq1} with $u(s,x;s,u_0)=u_0(x)$.

\begin{defn}
\label{spectrum-def1}
$[\lambda_{\min}(a,l),\lambda_{\max}(a,l)]$ is called the {\rm principal spectrum interval} of \eqref{linear-eq1}, where
$$
\lambda_{\min}(a,l)=\liminf_{t-s\to \infty}\frac{ \ln \|U(t,s;a,l)\|}{t-s}, \quad \lambda_{\max}(a,l)=\limsup_{t-s\to\infty}\frac{\ln \|U(t,s;a,l)\|}{t-s}.
$$
\end{defn}

Let
$$
H(a)={\rm cl}\{a\cdot t(\cdot,\cdot):=a(t+\cdot,\cdot)\,|\, t\in \RR\}
$$
with open compact topology, where the closure is taken under the open compact topology. In literature, $H(a)$ is called the hull of $a(\cdot,\cdot)$.

\begin{lem}
\label{spectrum-lm1}
$[\lambda_{\min}(a,l),\lambda_{\max}(a,l)]$ is a compact interval. Moreover,
there is a continuous function $\omega:H(a)\to X^{++}(l)$ such that
$$
\|w(\tilde a)\|_{X(l)}=1\quad \forall\,\, \tilde a\in H(a),
$$
$$
\frac{U(t,0;\tilde a,l)\omega(b)}{\|U(t,0;\tilde a,l)\omega(b)\|_{X(l)}}=\omega(b\cdot t)\quad \forall\, t>0,\,\, \tilde a\in H(a),
$$
and
$$
\lambda_{\min}(a,l)=\liminf_{t-s\to\infty} \frac{\ln \|U(t-s,0;a\cdot s,l)\omega(a\cdot s)\|}{t-s},
$$
$$
 \lambda_{\max}(a,l)=\limsup_{t-s\to\infty} \frac{\ln \|U(t-s,0;a\cdot s,l)\omega(a\cdot s)\|}{t-s}.
 $$
\end{lem}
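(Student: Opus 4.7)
The plan is to use the strong positivity and compactness of the evolution operator $U(t,s;\tilde a,l)$ together with the theory of principal Floquet bundles on the compact hull $H(a)$. I will first verify the required monotonicity/compactness structure, then construct the continuous principal family $\omega$, and finally identify the $\liminf$/$\limsup$ defining $\lambda_{\min}$ and $\lambda_{\max}$ with the exponential growth rate along $\omega$.

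First I would check that for each $\tilde a\in H(a)$ and each $t>s$ the operator $U(t,s;\tilde a,l):X(l)\to X(l)$ is compact and strongly positive, i.e., maps $X^+(l)\setminus\{0\}$ into $X^{++}(l)$. Compactness follows from the standard parabolic smoothing on the bounded interval $(0,l)$ with the mixed Neumann--Dirichlet data. Strong positivity is a consequence of the parabolic strong maximum principle, together with the Hopf boundary-point lemma at $x=l$ (yielding $u_x(t,l)<0$) and the Neumann condition at $x=0$ (which preserves strict positivity up to the boundary). Since $H(a)$ is compact under the open-compact topology and $a$ is bounded, these properties are uniform over $\tilde a\in H(a)$.

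Second, I would consider the linear skew-product semiflow
\[
\Pi_t(\tilde a,u)=(\tilde a\cdot t,\; U(t,0;\tilde a,l)u)
\]
on $H(a)\times X(l)$, which preserves the positive cone bundle $H(a)\times X^+(l)$. By Birkhoff's theorem on the Hilbert projective metric, for any fixed $\tau>0$ the strongly positive compact operator $U(\tau,0;\tilde a,l)$ is a strict contraction on the projectivization of $X^{++}(l)$, with contraction ratio bounded away from $1$ uniformly in $\tilde a\in H(a)$. Pulling a fixed reference $\phi_0\in X^{++}(l)$ back along $\Pi_t$ and normalizing in $X(l)$, the uniform Hilbert contraction together with compactness of $H(a)$ produces a Cauchy family whose limit is the desired continuous section $\omega:H(a)\to X^{++}(l)$ with $\|\omega(\tilde a)\|_{X(l)}=1$. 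Invariance under $\Pi_t$ gives the cocycle identity; this is a standard principal Floquet bundle construction for monotone linear skew-product semiflows and can be quoted from the Mierczy\'nski--Shen framework.

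Third, I would derive the formulas for $\lambda_{\min}(a,l)$ and $\lambda_{\max}(a,l)$. Strong positivity together with uniformity over $H(a)$ gives constants $0<c_1\le c_2$ and $\tau_0>0$ such that for any unit $u\in X^+(l)$,
\[
c_1\,\omega(\tilde a\cdot\tau_0)\;\le\; U(\tau_0,0;\tilde a,l)u\;\le\; c_2\,\omega(\tilde a\cdot\tau_0),
\]
so the exponential growth rate of $\|U(t,s;a,l)u\|$ on nonnegative inputs coincides with that of $\|U(t-s,0;a\cdot s,l)\omega(a\cdot s)\|$. Since the operator norm is attained, up to a fixed multiplicative constant, on positive initial data (a direct consequence of pointwise domination $|v|\le w$ with $w=\omega(\tilde a)\cdot\sup|v|/\inf\omega$), the two $\liminf$ and $\limsup$ expressions agree, yielding the claimed representations. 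Boundedness of $a$ gives the a priori upper bound $\|U(t,s;a,l)\|\le e^{\|a\|_\infty(t-s)}$, and a Harnack-type lower bound coming from strong positivity provides the matching lower estimate, so $[\lambda_{\min}(a,l),\lambda_{\max}(a,l)]$ is a compact interval. The principal obstacle is the construction and continuity of $\omega$ on the non-periodic base $H(a)$; once the uniform Hilbert-metric contraction on the cone is set up (or the Mierczy\'nski--Shen principal bundle theorem is invoked), the remainder is Perron--Frobenius bookkeeping.
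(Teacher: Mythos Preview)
Your proposal is essentially correct and in fact goes considerably further than the paper does: the paper's entire proof is a direct citation of \cite[Propositions~2.3 and~2.4]{MiSh1} and \cite[Proposition~4.1.9]{MiSh2}, which package exactly the strong-positivity/compactness verification, the principal Floquet bundle construction on the hull, and the identification of $\lambda_{\min},\lambda_{\max}$ with growth rates along the principal section. What you have written is, in effect, an outline of how those cited results are proved; you even note yourself that the construction of $\omega$ ``can be quoted from the Mierczy\'nski--Shen framework.'' So the two approaches coincide at the level of ideas, with the paper outsourcing the work and you sketching it.

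One small gap in your sketch deserves attention. In the last step you argue that the operator norm growth rate agrees with the growth rate on positive data via the pointwise bound $|v|\le \omega(\tilde a)\cdot \sup|v|/\inf\omega$. But $\omega(\tilde a)\in X^{++}(l)$ vanishes at $x=l$, so $\inf_{[0,l]}\omega(\tilde a)=0$ and that bound is vacuous. The correct route is the one you already set up: use the uniform comparison $c_1\,\omega(\tilde a\cdot\tau_0)\le U(\tau_0,0;\tilde a,l)u\le c_2\,\omega(\tilde a\cdot\tau_0)$ for nonnegative unit $u$, combined with the positivity estimate $|U(t,s)v|\le U(t,s)|v|$ (so $\|U(t,s)v\|\le \|U(t,s)|v|\,\|$) to reduce the operator norm to nonnegative inputs, and then to $\omega$ after a fixed time shift $\tau_0$. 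With that correction your argument goes through and matches the content of the citations the paper invokes.
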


\begin{proof}
It follows from \cite[Propositions 2.3 and 2.4]{MiSh1} and \cite[Proposition 4.1.9]{MiSh2}.
\end{proof}

\begin{lem}
\label{spectrum-lm2}
$\lambda_{min}(a,l)$ and $\lambda_{\max}(a,l)$ are monotone in $a$ and $l$, that is, for any $a_1(\cdot,\cdot)\le a_2(\cdot,\cdot)$ and
$0<l_1\le l_2$,
$$
\lambda_{\min}(a_1,l_1)\le \lambda_{\min}(a_2,l_2),\quad \lambda_{\max}(a_1,l_1)\le \lambda_{\max}(a_2,l_2).
$$
\end{lem}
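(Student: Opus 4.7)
The plan is to bound the operator norm $\|U(t,s;a,l)\|$ directly via the parabolic comparison principle, and then pass to $\liminf$ and $\limsup$ in the defining formula from Definition \ref{spectrum-def1}.

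For the monotonicity in $a$, I will use that $U(t,s;a,l)$ is a positive operator on the Banach lattice $X(l)$ (by the parabolic maximum principle), so by standard lattice theory its operator norm is realized on $X^+(l)$:
$$
\|U(t,s;a,l)\|=\sup\bigl\{\|U(t,s;a,l)\phi\|_{X(l)}\,:\,\phi\in X^+(l),\ \|\phi\|_{X(l)}=1\bigr\}.
$$
Given $a_1\le a_2$ and $\phi\in X^+(l)$, set $u_i=U(t,s;a_i,l)\phi$. The difference $w=u_2-u_1$ solves $w_t=w_{xx}+a_2(t,x)w+(a_2-a_1)u_1$ with $w(s,\cdot)=0$ and the same homogeneous boundary conditions; since $(a_2-a_1)u_1\ge 0$, the parabolic maximum principle gives $w\ge 0$. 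Hence $\|U(t,s;a_1,l)\phi\|\le\|U(t,s;a_2,l)\phi\|$, and taking the supremum over $\phi$ yields $\|U(t,s;a_1,l)\|\le\|U(t,s;a_2,l)\|$. Applying $\tfrac{1}{t-s}\ln(\cdot)$ and passing to $\liminf$ and $\limsup$ as $t-s\to\infty$ gives the monotonicity of $\lambda_{\min}$ and $\lambda_{\max}$ in $a$.

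For the monotonicity in $l$, I will move from the smaller to the larger domain by zero extension. Fix $0<l_1\le l_2$ and $\phi\in X^+(l_1)$ with $\|\phi\|_{X(l_1)}=1$; since $\phi(l_1)=0$, its zero extension $\tilde\phi$ to $[0,l_2]$ lies in $X^+(l_2)$ with $\|\tilde\phi\|_{X(l_2)}=1$. Let $u(t,x)=(U(t,s;a,l_1)\phi)(x)$, denote by $\hat u$ its zero extension to $[0,l_2]$, and set $\tilde u(t,x)=(U(t,s;a,l_2)\tilde\phi)(x)$. Then $\hat u$ solves the parabolic equation classically on $(0,l_1)\cup(l_1,l_2)$, is continuous across $x=l_1$, and has a nonnegative derivative jump there, since $u>0$ on $(0,l_1)$ with $u(t,l_1)=0$ forces $u_x(t,l_1)\le 0$ while $\hat u_x(t,l_1^+)=0$. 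Thus $\hat u$ is a weak subsolution on $[0,l_2]$ with the same boundary conditions as $\tilde u$, and weak comparison yields $\hat u\le\tilde u$. Taking sup norms,
$$
\|U(t,s;a,l_1)\phi\|_{X(l_1)}=\|\hat u(t,\cdot)\|_{X(l_2)}\le\|\tilde u(t,\cdot)\|_{X(l_2)}\le\|U(t,s;a,l_2)\|,
$$
and the supremum over $\phi$, together with the lattice identity above on $X(l_1)$, gives $\|U(t,s;a,l_1)\|\le\|U(t,s;a,l_2)\|$. The same $\tfrac{1}{t-s}\ln(\cdot)$ passage finishes the proof.

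The main obstacle I anticipate is the weak-subsolution step in the $l$-monotonicity proof, since $\hat u$ has only a corner at $x=l_1$ and so is not a classical subsolution. I plan to handle this either via the distributional formulation, where the nonnegative derivative jump at $l_1$ contributes a favorably-signed measure after integration against a nonnegative test function, or, more elementarily, by perturbing $\phi$ to vanish on a small interval $[l_1-\eps,l_1]$, running the classical comparison on the resulting problems, and letting $\eps\downarrow 0$ using continuity of the evolution. Either route reduces the step to the classical parabolic maximum principle already used in the $a$-monotonicity argument.
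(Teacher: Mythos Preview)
Your proof is correct and rests on the same core mechanism as the paper's proof: the parabolic comparison principle together with zero extension of the initial datum from $[0,l_1]$ to $[0,l_2]$. The difference is purely in the bookkeeping. You bound the full operator norm $\|U(t,s;a,l)\|$ by taking the supremum over all nonnegative normalized initial data and then plug into Definition~\ref{spectrum-def1} directly, whereas the paper applies the comparison only to the specific principal function $\omega(a_1\cdot s)\in X^{++}(l_1)$ supplied by Lemma~\ref{spectrum-lm1}, compares $U(t-s,0;a_1\cdot s,l_1)\omega(a_1\cdot s)$ with the evolution on $[0,l_2]$ of its zero extension, and then reads off the inequalities for $\lambda_{\min}$ and $\lambda_{\max}$ from the characterizations in Lemma~\ref{spectrum-lm1}. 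Your route is slightly more elementary in that it avoids invoking the existence and continuity of $\omega$ and treats the $a$- and $l$-monotonicity in two clean separate steps; the paper's route is shorter because evaluating on the single function $\omega$ dispenses with the lattice-theoretic identity for the operator norm and with the supremum over $\phi$. Your careful discussion of the weak-subsolution issue at $x=l_1$ is a detail the paper suppresses; either of the two fixes you propose works.
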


\begin{proof}
By comparison principle for parabolic equations,
$$
(U(t-s,0;a_1\cdot s,l_1)\omega(a_1\cdot s))(x)\le (U(t-s,0;a_2\cdot s,l_2)\tilde \omega(a_2\cdot s))(x)\quad \forall\,\, t\ge s,\,\, 0\le x\le l_1,
$$
where $\tilde \omega(a_2\cdot s)(x)=\omega(a_1\cdot s)(x)$ for $0\le x\le l_1$ and $\tilde \omega(a_2\cdot s)(x)=0$ for
$l_1\le x\le l_2$.
The lemma then follows from Lemma \ref{spectrum-lm1}.
\end{proof}

\begin{rk}
\label{spectrum-rk1}
\begin{itemize}
\item[(1)] If $a(t,x)\equiv a$, then $\lambda_{\min}(a,l)=\lambda_{\max}(a,l)$ and $\lambda(a,l):=\lambda_{\min}(a,l)$
is the principal eigenvalue of
$$
\begin{cases}
u_{xx}+au=\lambda u,\quad 0<x<l\cr
u_x(0)=u(l)=0.
\end{cases}
$$
In this case, $\lambda(a,l)=-\frac{\pi^2}{2l^2}+a$ and $\omega(a)=\cos\frac{\pi x}{2l}$. Therefore,
$\lim_{l\to 0+}\lambda(a,l)=-\infty$ and $\lim_{l\to\infty}\lambda(a,l)=a$.

\item[(2)] If $a(t+T,x)=a(t,x)$, then $\lambda_{\min}(a,l)=\lambda_{\max}(a,l)$ and $\lambda(a,l):=\lambda_{\min}(a,l)$ is the principal eigenvalue of
$$
\begin{cases}
-u_t+u_{xx}+a(t,x)u=\lambda u,\quad 0<x<l\cr
u(t+T,x)=u(t,x),\quad 0\le x\le l\cr
u_x(t,0)=u(t,l)=0.
\end{cases}
$$
\end{itemize}
\end{rk}

\begin{lem}
\label{spectrum-lm3}
$\lim_{l\to 0+} \lambda_{\max}(a,l)=-\infty$ and $\lim_{l\to \infty} \lambda_{\min}(a,l)\ge a_{\inf}(>0)$.
\end{lem}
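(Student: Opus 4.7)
The plan is to reduce both limits to the autonomous case, where the principal spectrum collapses to a single principal eigenvalue with an explicit formula.

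First I would invoke the monotonicity result (Lemma \ref{spectrum-lm2}). Since $a_{\inf} \le a(t,x) \le a_{\sup}$, we obtain the sandwich
\begin{equation*}
\lambda_{\min}(a_{\inf},l) \le \lambda_{\min}(a,l) \le \lambda_{\max}(a,l) \le \lambda_{\max}(a_{\sup},l).
\end{equation*}
The outer quantities correspond to $t$- and $x$-independent coefficients, so by Remark \ref{spectrum-rk1}(1) they coincide with the classical principal eigenvalues
\begin{equation*}
\lambda(a_{\sup},l) = -\frac{\pi^2}{(2l)^2} + a_{\sup},\qquad \lambda(a_{\inf},l) = -\frac{\pi^2}{(2l)^2} + a_{\inf},
\end{equation*}
of the Neumann--Dirichlet eigenvalue problem $u_{xx}+\text{const}\cdot u = \lambda u$ on $[0,l]$ with $u_x(0)=u(l)=0$.

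For the first assertion, I would simply let $l\to 0^+$ in the upper bound: $\lambda_{\max}(a,l)\le a_{\sup}-\pi^2/(4l^2)\to -\infty$. For the second, I would let $l\to\infty$ in the lower bound: $\lambda_{\min}(a,l)\ge a_{\inf}-\pi^2/(4l^2)\to a_{\inf}$, and since $a_{\inf}>0$ by hypothesis, this gives $\liminf_{l\to\infty}\lambda_{\min}(a,l)\ge a_{\inf}>0$.

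I do not anticipate any real obstacle here; the argument is purely a monotonicity reduction to the constant-coefficient case plus the explicit eigenvalue computation recorded in Remark \ref{spectrum-rk1}(1). The only subtle point worth double-checking is that the monotonicity of Lemma \ref{spectrum-lm2} genuinely applies with the $a_{\inf}$ and $a_{\sup}$ comparison functions (they are admissible bounded $C^1$ coefficients on $\RR\times[0,l]$) and that the Neumann--Dirichlet principal eigenfunction $\cos(\pi x/(2l))$ identified in the Remark is what produces the $-\pi^2/(4l^2)$ term; both are routine.
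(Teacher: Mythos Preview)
Your proposal is correct and is essentially identical to the paper's proof: both use the monotonicity from Lemma~\ref{spectrum-lm2} to sandwich $\lambda_{\min}(a,l)$ and $\lambda_{\max}(a,l)$ between the constant-coefficient principal eigenvalues $\lambda(a_{\inf},l)$ and $\lambda(a_{\sup},l)$, and then invoke the explicit formula in Remark~\ref{spectrum-rk1}(1) to read off the limits. Your write-up is slightly more detailed (and your eigenvalue constant $-\pi^2/(4l^2)$ is the correct one for the eigenfunction $\cos(\pi x/(2l))$), but the argument is the same.
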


\begin{proof}
By Lemma \ref{spectrum-lm2},
$$
\lambda_{\min}(a,l)\ge \lambda(a_{\inf},l),\quad \lambda_{\max}(a,l)\le \lambda(a_{\sup},l).
$$
The lemma then follows from Remark \ref{spectrum-rk1}(1).
\end{proof}

By Lemmas \ref{spectrum-lm2} and  \ref{spectrum-lm3}, there is a unique $l^*>0$ such that
\begin{equation}
\label{l-star-eq}
\lambda_{\min}(a,l)>0\quad \forall\,\, l>l^*,\quad \lambda_{\min}(a,l^*)=0.
\end{equation}

Next, we recall some principal spectrum theory for
 the following parabolic equation,
\begin{equation}\label{linear-eq2}
\begin{cases}
 v_t = v_{xx} + a(t,x) v, \quad l_-<x<l_+
 \\
 v(t,l_-) = v(t,l_+) =0,
\end{cases}
\end{equation}
where $l_-<l_+$ and $a\in C^1(\RR)$ is bounded and $a_{\inf}>0$.
Let
$$
X(l_-,l_+)=\{v\in C([l_-,l_+])\,|\, v(l_-)=v(l_+)=0\}
$$
with norm $\|v\|=\sup_{x\in [l_-,l_+]}|v(x)|$. Let $U(t,s;a,l_-,l_+)$ be the evolution operator of \eqref{linear-eq2} on $X(l_-,l_+)$.

\begin{defn}
\label{spectrum-def2}
$[\lambda_{\min}(a,l_-,l_+),\lambda_{\max}(a,l_-,l_+)]$ is called the {\rm principal spectrum interval} of \eqref{linear-eq2}, where
$$
\lambda_{\min}(a,l_-,l_+)=\liminf_{t-s\to \infty}\frac{ \ln \|U(t,s;a,l_-,l_+)\|}{t-s}, \quad \lambda_{\max}(a,l_-,l_+)=\limsup_{t-s\to\infty}\frac{\ln \|U(t,s;a,l_-,l_+)\|}{t-s}.
$$
\end{defn}

Similarly, we have

\begin{lem}
\label{spectrum-lm4}
\begin{itemize}
\item[(1)] $[\lambda_{\min}(a,l_-,l_+),\lambda_{\max}(a,l_-,l_+)]$ is a compact interval.

\item[(2)] If $a_1(t,x)\le a_2(t,x)$, then
$$\lambda_{\min}(a_1,l_-,l_+)\le \lambda_{\min}(a_2,l_-,l_+),\quad
\lambda_{\max}(a_1,l_-,l_+)\le \lambda_{\max}(a_2,l_-,l_+).
$$

\item[(3)] If $l_{2,-}\le l_{1,-}$ and $l_{1,+}\le l_{2,+}$, then
$$\lambda_{\min}(a,l_{1,-},l_{1,+})\le \lambda_{\min}(a,l_{2,-},l_{2,+}),\quad
\lambda_{\max}(a,l_{1,-},l_{1,+})\le \lambda_{\max}(a,l_{2,-},l_{2,+}).
$$

\item[(4)] $\limsup_{l_+-l_-\to 0} \lambda_{\max}(a,l_-,l_+)=-\infty$ and $\liminf_{l_+-l_-\to\infty}\lambda_{\min}(a,l_-,l_+)\ge a_{\inf}(>0)$.

\end{itemize}
\end{lem}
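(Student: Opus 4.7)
The plan is to mirror the proofs of Lemmas \ref{spectrum-lm1}--\ref{spectrum-lm3} almost verbatim, with the mixed boundary condition replaced by pure Dirichlet conditions, and to supply one genuinely new ingredient for the cross-domain comparison in (3).

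For (1), the flow $U(t,s;a,l_-,l_+)$ on $X(l_-,l_+)$ is strongly order-preserving and eventually compact, so the abstract principal-spectrum machinery of \cite[Propositions 2.3, 2.4]{MiSh1} and \cite[Proposition 4.1.9]{MiSh2} applies as in Lemma \ref{spectrum-lm1}: it produces a continuous hull-parametrized profile $\omega:H(a)\to X^{++}(l_-,l_+)$, where $X^{++}(l_-,l_+):=\{v\in X(l_-,l_+): v>0 \text{ on } (l_-,l_+),\ v_x(l_-)>0,\ v_x(l_+)<0\}$, invariant under the normalized flow, together with exponential characterizations of both $\lambda_{\min}$ and $\lambda_{\max}$ via $\|U(t-s,0;a\cdot s,l_-,l_+)\omega(a\cdot s)\|$; in particular $[\lambda_{\min},\lambda_{\max}]$ is compact. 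For (2), the Dirichlet comparison principle yields $U(t,s;a_1,l_-,l_+)u_0\le U(t,s;a_2,l_-,l_+)u_0$ pointwise whenever $a_1\le a_2$ and $u_0\ge 0$; taking $u_0=\omega(a_1\cdot s)$, sup norms, and applying (1) delivers both monotonicity inequalities.

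For (3), I would extend by zero. Given $l_{2,-}\le l_{1,-}<l_{1,+}\le l_{2,+}$, fix $s\in\RR$, let $u_0=\omega_1(a\cdot s)\in X^{++}(l_{1,-},l_{1,+})$ be the profile from (1) applied to the smaller domain, and define $\tilde u_0\in X(l_{2,-},l_{2,+})$ by $\tilde u_0=u_0$ on $[l_{1,-},l_{1,+}]$ and $\tilde u_0=0$ elsewhere. Writing $\bar w(t,x)$ for the zero extension of $(U(t-s,0;a\cdot s,l_{1,-},l_{1,+})u_0)(x)$ to $[l_{2,-},l_{2,+}]$: since $u_{0,x}(l_{1,-})>0$ and $u_{0,x}(l_{1,+})<0$, the function $\bar w(t,\cdot)$ is continuous with upward jumps of $\bar w_x$ at $l_{1,\pm}$, so $\bar w_{xx}$ contains nonnegative delta masses there and $\bar w$ is a weak subsolution of the parabolic equation on $(l_{2,-},l_{2,+})$ with Dirichlet data and initial datum $\tilde u_0$. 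Comparison then gives $\bar w(t,x)\le (U(t-s,0;a\cdot s,l_{2,-},l_{2,+})\tilde u_0)(x)$, whence
\[
\|U(t-s,0;a\cdot s,l_{1,-},l_{1,+})u_0\|\le \|U(t-s,0;a\cdot s,l_{2,-},l_{2,+})\tilde u_0\|,
\]
and invoking (1) on both sides yields $\lambda_{\min}(a,l_{1,-},l_{1,+})\le \lambda_{\min}(a,l_{2,-},l_{2,+})$ and the analogous inequality for $\lambda_{\max}$.

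For (4), combine (2) with the constant-coefficient Dirichlet principal eigenvalue $\lambda(c,l_-,l_+)=c-\pi^2/(l_+-l_-)^2$, realized by the eigenfunction $\sin\bigl(\pi(x-l_-)/(l_+-l_-)\bigr)$. Bracketing with $c=a_{\inf}$ and $c=a_{\sup}$ gives
\[
a_{\inf}-\frac{\pi^2}{(l_+-l_-)^2}\le \lambda_{\min}(a,l_-,l_+)\le \lambda_{\max}(a,l_-,l_+)\le a_{\sup}-\frac{\pi^2}{(l_+-l_-)^2}.
\]
Sending $l_+-l_-\to 0$ in the upper bound yields the first limit; sending $l_+-l_-\to\infty$ in the lower bound yields the second. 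The main obstacle is the weak-comparison step in (3): the extension $\bar w$ is only Lipschitz at $l_{1,\pm}$, so the comparison with the Dirichlet flow on the larger interval is non-classical. I would either invoke a comparison theorem for continuous weak sub-/supersolutions exploiting the nonnegative distributional jump of $\bar w_{xx}$, or approximate $u_0$ by smooth data vanishing in a neighborhood of $\{l_{1,-},l_{1,+}\}$, apply classical comparison, and pass to the limit via continuous dependence.
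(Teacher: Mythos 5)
Your argument is correct, and parts (1), (2), and (4) follow the paper's (implicit) template exactly. The only place you diverge in substance is part (3), and there you have made the step harder than it needs to be. You flag as ``the main obstacle'' that the zero extension $\bar w$ of the small-domain solution is only Lipschitz at $l_{1,\pm}$, so that treating it as a weak subsolution on $(l_{2,-},l_{2,+})$ requires a distributional comparison theorem or an approximation argument. But there is no need to view $\bar w$ as a subsolution on the \emph{larger} interval at all: the comparison can be run classically on the \emph{smaller} one. Set $W(t,\cdot)=U(t-s,0;a\cdot s,l_{2,-},l_{2,+})\tilde u_0$ and $w(t,\cdot)=U(t-s,0;a\cdot s,l_{1,-},l_{1,+})u_0$. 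Since $\tilde u_0\ge 0$, the maximum principle gives $W\ge 0$ on all of $[l_{2,-},l_{2,+}]$ for $t\ge s$. Now on the cylinder $(s,\infty)\times(l_{1,-},l_{1,+})$ both $W$ and $w$ solve the same linear parabolic equation, they coincide at $t=s$, and on the lateral boundary $W(t,l_{1,\pm})\ge 0 = w(t,l_{1,\pm})$. A classical comparison on this smaller cylinder yields $w\le W$ on $[l_{1,-},l_{1,+}]$, and trivially $\bar w=0\le W$ outside; the norm inequality and the conclusion follow as you wrote. This is exactly the reading of the paper's proof of Lemma \ref{spectrum-lm2}, where the displayed inequality is asserted only for $0\le x\le l_1$ --- i.e.\ on the smaller domain, where the comparison is classical --- so your remedies (weak comparison with nonnegative delta masses, or smooth approximation with continuous dependence) are valid but superfluous.
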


By Lemma \ref{spectrum-lm4}, there is a unique $l^{**}>0$ such that
\begin{equation}
\label{l-star-star-eq}
\begin{cases}
\lambda_{\min}(a,l_-,l_+)>0\quad \forall\,\, l_-<l_+\,\, {\rm with}\,\,  l_+-l_->l^{**}\cr
 \lambda_{\min}(a,l_-,l_+)=0\,\, \text{for some}\,\, l_-<l_+\,\, {\rm with}\,\, l_+-l_-=l^{**}.
\end{cases}
\end{equation}

\subsection{Fisher-KPP equations on bounded fixed domains}

In this subsection, we present some results on the asymptotic behavior of positive solutions of Fisher-KPP equations on bounded fixed domains.
First, consider
\begin{equation}
\label{fisher-kpp-eq1}
\begin{cases}
u_t=u_{xx}+\beta(t,x) u_x +u(a(t,x)-b(t,x) u),\quad 0<x<l\cr
u_x(t,0)=u(t,l)=0,
\end{cases}
\end{equation}
where $l$ is  positive constant and $\beta(t,x)$ is a bounded $C^1$ function. For given $u_0\in X(l)$ with $u_0\ge 0$, let $u(t,x;u_0,\beta,a,b)$ be the solution of
\eqref{fisher-kpp-eq1} with $u(0,x;u_0,\beta,a,b)=u_0(x)$.

\begin{lem}
\label{fisher-kpp-lm1}
If $\lambda(a_{\inf},l)>0$, then there is $\beta_0>0$ such that for any $\beta(t,x)$ with $|\beta(t,x)|\le\beta_0$,
 there is a unique  positive entire  solution $u^*(t,\cdot;\beta,a,b)\in X^{++}(l)$ of \eqref{fisher-kpp-eq1} such that
for any $u_0\in X^+(l)\setminus\{0\}$,
$$
\lim_{t\to\infty} \|u(t,\cdot;u_0,\beta,a,b)-u^*(t,\cdot;\beta,a,b)\|_{X(l)}=0.
$$
\end{lem}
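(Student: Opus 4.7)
The idea is to treat the drift term $\beta(t,x)u_x$ as a small perturbation of the standard non-autonomous Fisher--KPP equation on a bounded domain and then run the classical super/sub-solution plus pullback plus part-metric argument. First I would verify that positivity of the principal spectrum persists under a small drift. By Lemma \ref{spectrum-lm2}, $\lambda_{\min}(a,l)\ge \lambda(a_{\inf},l)>0$. Writing $v_t = v_{xx}+\beta v_x+a(t,x)v$ with the same mixed boundary conditions and denoting its evolution operator $U^\beta(t,s;a,l)$, I would check that the principal spectrum interval depends continuously on $\beta$ in $L^\infty$, using the continuity of the skew-product semiflow exactly as in Lemma \ref{spectrum-lm1}. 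This gives $\beta_0>0$ so that $\lambda_{\min}^\beta(a,l)>0$ whenever $|\beta|\le\beta_0$, together with a jointly continuous family of principal eigenfunctions $\omega^{\tilde a,\beta}(\cdot)\in X^{++}(l)$ whose normalized orbit under $U^\beta$ realizes this growth rate.

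Next I would produce a super- and a sub-solution. Any constant $\bar u \ge a_{\sup}/b_{\inf}$ satisfies $\bar u_t - \bar u_{xx} - \beta\bar u_x - \bar u(a - b\bar u) \ge 0$ and dominates $u_0$ on $[0,l]$, so comparison gives $u(t,\cdot;u_0,\beta,a,b)\le \max\{\|u_0\|_\infty, a_{\sup}/b_{\inf}\}$. For the lower bound, set $\phi(t,x) := U^\beta(t,0;a,l)\omega^{a,\beta}(x)$; this is a positive classical solution of the linearized equation with exponential growth rate bounded below by $\lambda_{\min}^\beta>0$. For sufficiently small $\epsilon>0$, the function $\epsilon\phi$ satisfies
\begin{equation*}
(\epsilon\phi)_t - (\epsilon\phi)_{xx}-\beta(\epsilon\phi)_x - \epsilon\phi(a - b\epsilon\phi) = b(\epsilon\phi)^2 \ge 0,
\end{equation*}
so $\epsilon\phi$ is a sub-solution on any time interval where $\epsilon\phi\le a_{\sup}/b_{\inf}$. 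The strong maximum principle and Hopf lemma give $u(t,\cdot;u_0,\beta,a,b)\in X^{++}(l)$ for every $t>0$ and every nontrivial $u_0\in X^+(l)$, and combining this with exponential growth of $\phi$ yields uniform persistence: there is $\epsilon_0>0$ such that $u(t,\cdot;u_0,\beta,a,b)\ge \epsilon_0\,\omega^{a\cdot t,\beta}$ for all sufficiently large $t$.

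For existence of an entire positive solution I would use a pullback construction. Setting $U_n(t,x) := u(t,x;-n,\bar u,\beta,a,b)$ for $t\ge -n$, the uniform $L^\infty$ bound together with interior parabolic Schauder estimates gives compactness, and a diagonal subsequence yields an entire solution $u^*(t,\cdot;\beta,a,b)$ satisfying $\epsilon_0\,\omega^{a\cdot t,\beta}\le u^*(t,\cdot;\beta,a,b)\le M$ for all $t\in\mathbb{R}$, hence $u^*(t,\cdot;\beta,a,b)\in X^{++}(l)$.

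Uniqueness and the convergence \,$\|u(t,\cdot;u_0,\beta,a,b)-u^*(t,\cdot;\beta,a,b)\|_{X(l)}\to 0$\, would follow from a part-metric (Hilbert projective) contraction. Given two strictly positive classical solutions $u_1,u_2$ on $[T,\infty)$ with uniform upper and lower bounds, define
\begin{equation*}
\rho(t) := \inf\bigl\{\gamma\ge 1 \ :\ \gamma^{-1} u_1(t,\cdot)\le u_2(t,\cdot)\le \gamma\,u_1(t,\cdot)\bigr\}.
\end{equation*}
A direct computation shows that $w:=\gamma u_1-u_2$ satisfies a linear parabolic inequality whose zeroth-order coefficient includes a strictly negative term $-b(t,x)u_1(\gamma-1)$ coming from the logistic damping; together with the strong maximum principle this forces $\rho$ to be non-increasing and strictly decreasing whenever $\rho>1$. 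The uniform upper/lower bounds make this decrease uniform in time, giving $\rho(t)\to 1$. Applying the argument to the pair $(u(\cdot,\cdot;u_0),u^*)$ yields both uniqueness of $u^*$ and the asymptotic stability statement. The main obstacle is the joint continuity of the principal spectrum and eigenfunctions $\omega^{\tilde a,\beta}$ in $\beta$ in the general non-autonomous setting: in the autonomous or time-periodic case this is immediate from Krein--Rutman, but here it requires the skew-product exponential separation framework of Mierczynski--Shen underlying Lemma \ref{spectrum-lm1}.
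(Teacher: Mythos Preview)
Your overall architecture is precisely what lies behind the paper's two-line proof: the paper invokes \cite[Theorem 4.4.3]{MiSh2} for the perturbation statement $\lambda_{\min}(\beta,a,l)>0$ once $|\beta|\le\beta_0$, and then \cite[Corollary 3.4]{HeSh} for existence, uniqueness and attractivity of the positive entire solution. Your plan---perturb the principal spectrum, build ordered super/sub-solutions, pullback to get an entire solution, contract in the part metric---is an unpacking of exactly those two citations.

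There is, however, a genuine gap in your sub-solution step. The computation
\[
(\epsilon\phi)_t - (\epsilon\phi)_{xx}-\beta(\epsilon\phi)_x - \epsilon\phi\bigl(a - b\,\epsilon\phi\bigr) \;=\; b(\epsilon\phi)^2 \;\ge\; 0
\]
is correct, but the sign makes $\epsilon\phi$ a \emph{super}-solution of the nonlinear problem, not a sub-solution: for a KPP nonlinearity $u(a-bu)\le au$, solutions of the linearized equation dominate the nonlinear flow from above. Hence the step ``comparing with the exponentially growing $\epsilon\phi$ yields uniform persistence $u(t,\cdot)\ge \epsilon_0\,\omega^{a\cdot t,\beta}$'' does not follow; comparison only gives $u\le\epsilon\phi$, which is useless for a lower bound.

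A correct route to persistence (and the one used in \cite{HeSh}) is to work instead with the \emph{normalized} section $\psi(t,\cdot):=\omega^{a\cdot t,\beta}$. From $\lambda_{\min}^\beta>0$ one obtains $\lambda_0>0$ and $T_0>0$ with $\|U^\beta(s+T_0,s)\psi(s,\cdot)\|\ge e^{\lambda_0 T_0}$ for all $s$; if $\|u(t_1,\cdot)\|$ were small, the quadratic term $-bu^2$ is negligible on $[t_1,t_1+T_0]$ and $u$ grows by a definite factor, which iterates to a uniform lower bound on $\liminf_{t\to\infty}\|u(t,\cdot)\|$. Equivalently, one can build a genuine sub-solution of the form $c(t)\psi(t,\cdot)$ by letting $c$ solve $c'=(\lambda(t)-b_{\sup}c)\,c$, where $\lambda(t)$ is the instantaneous growth rate of $\|U^\beta(t,0)\omega\|$; because the time-average of $\lambda$ is at least $\lambda_{\min}^\beta>0$, this ODE has a positive floor. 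With either fix in place, your pullback construction and part-metric contraction go through as written.
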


\begin{proof}
Consider
\begin{equation}
\label{linear-eq3}
\begin{cases}
u_t=u_{xx}+\beta(t,x)u_x+a(t,x) u,\quad 0<x<l\cr
u_x(t,0)=u(t,l)=0.
\end{cases}
\end{equation}
Similarly, we can define the principal spectrum interval
$[\lambda_{\min}(\beta,a,l),\lambda_{\max}(\beta,a,l)]$ of \eqref{linear-eq3}.
By $\lambda(a,l)>0$ and \cite[Theorem 4.4.3]{MiSh2}, there is $\beta_0>0$ such that for any $\beta(t,x)$ with
$|\beta(t,x)|\le \beta_0$,
$$
\lambda_{\min}(\beta,a,l)>0.
$$
The lemma then follows from the arguments of \cite[Corollary 3.4]{HeSh}.
\end{proof}

Next, consider
\begin{equation}
\label{fisher-kpp-eq2}
\begin{cases}
u_t=u_{xx}+\beta(t,x) u_x +u(a_0-b_0 u),\quad l_1<x<l_2\cr
u(t,l_1)=u(t,l_2)=0,
\end{cases}
\end{equation}
where $a_0, b_0$ are positive constants, $l_1<l_2$, and $\beta(t,x)$ is a bounded continuous function.

\begin{lem}
\label{fisher-kpp-lm2}
If $\lambda(a_0,l_1,l_2)>0$, then there is $\beta_0>0$ such that for any $\beta(t,x)$ with
$|\beta(t,x)|\le \beta_0$, \eqref{fisher-kpp-eq2} has a unique positive entire solution
$u^*(t,x;\beta,a_0,b_0)$ such that for any $u_0\in C^1([l_1,l_2])\setminus\{0\}$ with $u_0(l_1)=u_0(l_2)=0$ and
$u_0(x)\ge 0$ for $x\in (l_1,l_2)$,
$$
\lim_{t\to\infty} \|u(t,\cdot;u_0,\beta,a_0,b_0)-u^*(t,\cdot;\beta,a_0,b_0)\|_{C([l_1,l_2])}=0,
$$
where $u(t,x;u_0,\beta,a_0,b_0)$ is the solution of \eqref{fisher-kpp-eq2} with
$u(0,x;u_0,\beta,a_0,b_0)=u_0(x)$.
\end{lem}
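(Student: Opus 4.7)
The plan is to mirror the proof of Lemma \ref{fisher-kpp-lm1}, making the routine adjustments needed to pass from the mixed Neumann--Dirichlet setting on $(0,l)$ to the Dirichlet--Dirichlet setting on $(l_1,l_2)$. First I would consider the associated linearized problem
\begin{equation*}
\begin{cases}
u_t = u_{xx} + \beta(t,x)\,u_x + a_0\, u, \quad l_1<x<l_2\\
u(t,l_1)=u(t,l_2)=0,
\end{cases}
\end{equation*}
and define its principal spectrum interval $[\lambda_{\min}(\beta,a_0,l_1,l_2),\lambda_{\max}(\beta,a_0,l_1,l_2)]$ exactly as in Definition \ref{spectrum-def2}, but with the additional drift term (this uses the same Ruelle--Oseledets/skew-product machinery on the hull of $\beta$ as in Lemma \ref{spectrum-lm1}).

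Next I would invoke the perturbation result of the principal spectrum with respect to the lower-order coefficient (the analog of \cite[Theorem 4.4.3]{MiSh2} used in the proof of Lemma \ref{fisher-kpp-lm1}) to produce $\beta_0>0$ such that $|\beta(t,x)|\le\beta_0$ implies
\[
\lambda_{\min}(\beta,a_0,l_1,l_2)>0.
\]
Here the hypothesis $\lambda(a_0,l_1,l_2)>0$ plays the role of the positivity of the unperturbed principal eigenvalue. Since the Dirichlet setting only affects the definition of the Banach space $X(l_1,l_2)$ in Lemma \ref{spectrum-lm4} and the form of the principal eigenfunction, the proof of this perturbation statement carries over verbatim.

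With $\lambda_{\min}(\beta,a_0,l_1,l_2)>0$ in hand, existence, uniqueness, and global stability of a strictly positive bounded entire solution $u^*(t,x;\beta,a_0,b_0)$ follow from the arguments of \cite[Corollary 3.4]{HeSh}. Concretely, the positivity of the principal Lyapunov exponent gives a family of arbitrarily small positive sub-solutions, while $u\equiv M$ with $M>a_0/b_0$ is a super-solution; monotone iteration and the standard squeezing argument (using the strict concavity of $u\mapsto u(a_0-b_0u)$ together with the strong maximum principle on $(l_1,l_2)$) then yield a unique positive entire solution that attracts every nontrivial initial datum $u_0\in C^1([l_1,l_2])$ with $u_0(l_i)=0$ and $u_0\ge 0$, in the $C([l_1,l_2])$ norm.

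The main obstacle is checking that the perturbation result for $\lambda_{\min}$ is insensitive to replacing the Neumann condition at $x=0$ by a Dirichlet condition at $x=l_1$; this boils down to verifying that the Krein--Rutman type continuous selection of normalized principal eigenfunctions over the hull $H(\beta)$ (as in Lemma \ref{spectrum-lm1}) still exists in the Dirichlet--Dirichlet case. Once this is granted, the rest of the proof is essentially routine application of comparison and the Hess--Lazer strict concavity argument.
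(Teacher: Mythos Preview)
Your proposal is correct and follows essentially the same approach as the paper: the paper's proof of this lemma consists of the single sentence ``It can be proved by the similar arguments as those in Lemma \ref{fisher-kpp-lm1},'' and you have accurately reconstructed and fleshed out those arguments (linearize, invoke the perturbation result \cite[Theorem 4.4.3]{MiSh2} for the principal spectrum to obtain $\beta_0$, then apply \cite[Corollary 3.4]{HeSh}). Your additional remarks on sub/super-solutions, concavity, and the Krein--Rutman selection over the hull are supplementary detail that the paper leaves implicit.
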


\begin{proof}
It can be proved by the similar arguments as those in Lemma \ref{fisher-kpp-lm1}.
\end{proof}

\section{Existence and uniqueness of globally defined solutions of the free boundary problems}

In this section, we study the existence and uniqueness of globally defined solutions of \eqref{one-free-boundary-eq} with nonnegative initial functions and prove Theorem \ref{free-boundary-thm1}. To do so, we first prove
two lemmas.

{ For given $T>0$ and a continuous function $h:[0,T]\to (0,\infty)$, let
$$
D_{T,h(\cdot)}=\{(t,x)\in\RR^2\,|\, x\in [0,h(t)],\,\, t\in [0,T]\}.
$$
For given $\alpha,\beta\in [0,1)$ and $u:D_{T,h(\cdot)}\to \RR$, let
$$
|u|_{\alpha,\beta}=\sup_{(t,x),(s,y)\in D_{T,h(\cdot)}, (t,x)\not =(s,y)}\frac{|u(t,x)-u(s,y)|}{|t-s|^\alpha+|x-y|^\beta}.
$$
For given $(k,l)\in \mathbb{Z}^+\times \mathbb{Z}^+$ and $\alpha,\beta\in [0,1)$,
let
\begin{align*}
C^{k+\alpha,l+\beta}(D_{T,h(\cdot)})=\{u(\cdot,\cdot)\in C(D_{T,h(\cdot)})\,| &\, \frac{\p ^i u}{\p t^i}\in C(D_{T,h(\cdot)}) \, (1\le i\le k),\,\,
\frac{\p ^j u}{\p x^j}\in C(D_{T,h(\cdot)})\\
& (1\le j\le l),\,|\frac{\p ^k u}{\p t^k}|_{\alpha,\beta}<\infty,\,\, |\frac{\p ^l u}{\p x^l}|_{\alpha,\beta}<\infty\}
\end{align*}
equipped with the norm
$$
\|u\|_{C^{k+\alpha,l+\beta}(D_{T,h(\cdot)})}=\|u\|_{C(D_{T,h(\cdot)})}+\sum_{i=1}^k \|\frac{\p^i u}{\p t^i}\|_{C(D_{T,h(\cdot)})}
+\sum_{j=1}^l \|\frac{\p ^j u}{\p x^j}\|_{C(D_{T,h(\cdot)})}+|\frac{\p ^k u}{\p t^k}|_{\alpha,\beta}
+|\frac{\p ^l u}{\p x^l}|_{\alpha,\beta}.
$$
}

The first lemma is on the existence and uniqueness of (local) solutions of \eqref{one-free-boundary-eq} with nonnegative initial functions
{ $(u_0,h_0)$ satisfying \eqref{initial-cond-eq}.}

\begin{lem}[Local existence]
\label{local-existence-lm}
For any given $h_0>0$, nonnegative $u_0\in C^2([0,h_0])$ with $u^{'}_0(0)=0$ and $u_0(h_0)=0$, and $\alpha \in (0,1)$, there is $T>0$ such that
the system \eqref{one-free-boundary-eq} admits a unique solution
\begin{eqnarray*}
 &&(u, v_1,v_2,h)\in {C^{\alpha/2,1+\alpha}(D_T)}\times C^{0,\alpha}(D_T)\times C^{0,\alpha}(D_T)\times C^{1+\alpha/2}([0,T])
\end{eqnarray*}
with
$u(0,x)=u_0(x)$ and  $h(0)=h_0$.
Moreover
\begin{equation}
\|u\|_{C^{\alpha/2,1+\alpha}(D_T)} + \|v_1\|_{C^{0,\alpha}(D_T)} + \|v_2\|_{C^{0,\alpha}(D_T)} + \|h\|_{C^{1+\alpha/2}([0,T])} \leq C
\end{equation}
where $D_T = D_{T,h(\cdot)}=\{(t,x) \in \mathbb{R}^2\,|\, x\in [0,h(t)], t\in [0,T]\}$ and $C$  only depends on $h_0,\alpha, T,$ and $\|u_0\|_{C^2([0,h_0])}$
\end{lem}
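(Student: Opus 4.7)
\medskip
\noindent\textbf{Proof proposal.}
The plan is to follow the Du--Lin straightening approach, converting the free boundary problem into a fixed-domain problem and then solving by a contraction mapping argument in Hölder spaces for small $T$. First I would introduce the change of variables $y = x/h(t)$ and set $w(t,y) = u(t,yh(t))$, $V_i(t,y) = v_i(t,yh(t))$. The parabolic equation for $u$ becomes
\begin{equation*}
w_t = \frac{1}{h^2} w_{yy} + \frac{h'(t)\,y}{h(t)} w_y - \frac{\chi_1}{h^2}(wV_{1,y})_y + \frac{\chi_2}{h^2}(wV_{2,y})_y + w\bigl(a(t,yh) - b(t,yh)w\bigr)
\end{equation*}
on $(0,1)$ with $w_y(t,0) = w(t,1) = 0$, while each $V_i$ solves the ODE (in $y$, for each fixed $t$) $V_{i,yy} - \lambda_i h^2 V_i + \mu_i h^2 w = 0$ with Neumann conditions $V_{i,y}(t,0) = V_{i,y}(t,1) = 0$, and the Stefan condition reads $h'(t) = -\nu w_y(t,1)/h(t)$.

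Next I would set up the fixed-point space. For a small $T$ to be chosen and a constant $K$ depending on $h_0$ and $\|u_0\|_{C^2}$, let
\begin{equation*}
\Sigma_T = \bigl\{h \in C^1([0,T]) : h(0) = h_0,\ h'(0) = -\nu u_0'(h_0),\ \|h'\|_{C^{\alpha/2}([0,T])} \le K\bigr\}.
\end{equation*}
For each $h \in \Sigma_T$, I would (i) solve the elliptic two-point boundary value problems for $V_i$ in terms of $w$ via the explicit Green's representation, which gives a bounded linear map $w \mapsto V_i$ that gains two derivatives in $y$ uniformly for $h \in \Sigma_T$; (ii) use the elliptic equation to replace $V_{i,yy}$ by $\lambda_i h^2 V_i - \mu_i h^2 w$ so that $(wV_{i,y})_y = w_y V_{i,y} + w(\lambda_i h^2 V_i - \mu_i h^2 w)$, turning the $w$-equation into a quasilinear parabolic equation with Hölder coefficients for which standard Schauder theory (see Ladyženskaja--Solonnikov--Ural'ceva) yields a unique solution $w \in C^{1+\alpha/2,\,2+\alpha}([0,T]\times[0,1])$ with norm controlled by the Hölder norms of $h$, $h'$ and $u_0$; and (iii) define
\begin{equation*}
(\mathcal{F}h)(t) = h_0 - \nu\int_0^t \frac{w_y(s,1)}{h(s)}\,ds.
\end{equation*}

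The third step is to show $\mathcal{F}:\Sigma_T \to \Sigma_T$ and that $\mathcal{F}$ is a contraction. The inclusion follows because $w_y(\cdot,1)/h$ is bounded in $C^{\alpha/2}$ by a constant independent of $T$, so for $T$ small $\|\mathcal{F}h\|$ stays in $\Sigma_T$. For the contraction estimate, take $h_1,h_2 \in \Sigma_T$, subtract the corresponding equations for $w_1,w_2$ and $V_{i,1},V_{i,2}$, and bound the difference in $C^{1+\alpha/2,\,2+\alpha}$ by $C\,T^{\gamma}\|h_1-h_2\|_{C^1}$ for some $\gamma > 0$, using the continuity of the Schauder solution with respect to the coefficients and the Lipschitz dependence of $V_i$ on $w$ from the elliptic representation. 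This yields a unique fixed point $h$ for $T$ small, which produces the required solution quadruple $(u,v_1,v_2,h)$; transforming back restores the Hölder bound asserted in the lemma.

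The main obstacle is the chemotaxis/cross-diffusion terms $\chi_i (wV_{i,y})_y$, which couple $w$ to the elliptic unknowns $V_i$ in a way that is not present in the classical Stefan problem treated in \cite{DuLi}. The crucial observation that closes the argument is that the elliptic equation for $V_i$ produces two full derivatives of gain, so $\|V_i(t,\cdot)\|_{C^{2+\alpha}([0,1])} \le C\|w(t,\cdot)\|_{C^{\alpha}([0,1])}$ uniformly for $h \in \Sigma_T$; combined with the algebraic rewriting of $V_{i,yy}$, this makes the nonlinearity behave like a lower-order perturbation and keeps the Schauder estimates and the contraction step uniform on small time intervals.
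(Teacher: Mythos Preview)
Your overall strategy is sound and is a legitimate variant of the paper's argument, but it differs in two structural ways and has a gap in step (ii). The paper uses the localized straightening $x=y+\zeta(y)(h(t)-h_0)$ rather than the global rescaling $y=x/h(t)$, and it runs the contraction on the \emph{pair} $(\omega,h)$ in $C(\Delta_T)\times C^1([0,T])$ rather than on $h$ alone. In the paper's scheme, given $(\omega,h)$ one first solves the two elliptic equations for $\bar z_1,\bar z_2$ and then a \emph{linear} parabolic equation for $\bar\omega$ (with $\omega$ frozen in the lower-order terms), appealing to $L^p$ theory plus Sobolev embedding to land in $C^{\alpha/2,1+\alpha}$. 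Your scheme collapses this into a single step: for each $h\in\Sigma_T$ you must produce $w$ solving a parabolic equation whose first-order coefficient $V_{i,y}$ and zero-order coefficient $V_i$ depend \emph{nonlocally} on $w$ through the elliptic solution operator.

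That is where step (ii) as written does not close. The quasilinear Schauder theory in Lady\v{z}enskaja--Solonnikov--Ural'ceva applies to equations whose coefficients depend locally on $(t,x,w,\nabla w)$; it does not directly cover the nonlocal dependence $w\mapsto V_i[w]$. To make your route work you need an additional inner fixed point (freeze $w$ in the $V_i$'s, solve the resulting linear parabolic problem, iterate), which is exactly the device the paper absorbs into its outer contraction by carrying $\omega$ in the space. A second, smaller issue: you claim $w\in C^{1+\alpha/2,\,2+\alpha}$ via Schauder, but the hypothesis is only $u_0\in C^2$, not $C^{2+\alpha}$, and no compatibility conditions are assumed; this is why the paper uses $L^p$ estimates and Sobolev embedding to obtain $C^{\alpha/2,1+\alpha}$ rather than full classical Schauder regularity. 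Both points are repairable, but as stated step (ii) is an assertion, not an argument.
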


\begin{proof}
The lemma  can be proved by the similar arguments as those in
\cite[Theorem 2.1]{DuLi}. But, due to the presence of the chemotaxis, nontrivial modifications of the arguments  in
\cite[Theorem 2.1]{DuLi}
are needed.
 For the completeness, we provide a proof in the following.

 As in \cite{Chen2000Afree}, we first straighten the free boundary. Let $\zeta(y)$ be a function in $C^3[0,\infty)$ satisfying
\begin{equation*}
 \zeta(y) = 1 \quad \mbox{if}\quad |y - h_0| < \frac{h_0}{4}, \quad \zeta(y) = 0 \quad \mbox{if} \quad |y-h_0| > \frac{h_0}{2}, \quad |\zeta'(y)| <\frac{6}{h_0}\quad  \forall y\ge 0.
\end{equation*}
We introduce a transformation that will straighten the free boundary:
\begin{equation*}
 (t,y) \rightarrow (t,x), \quad\mbox{where}\quad x = y +\zeta(y)(h(t) - h_0),\quad 0 \leq y<\infty.
\end{equation*}
As long as
\begin{equation*}
 |h(t) - h_0| \leq \frac{h_0}{8},
\end{equation*}
the above transformation is a diffeomorphism from { $[0,+\infty)\times [0,h_0]$ onto $[0,+\infty)\times [0,h(t)]$}. Moreover, it changes the free boundary $x = h(t)$ to the fixed boundary $y = h_0$. One easily computes that
\begin{eqnarray*}
 \frac{\partial y}{\partial x} &=& \frac{1}{1 + \zeta'(y)(h(t) - h_0)} \equiv \sqrt{A(h(t),y)},
 \\
  \frac{\partial^2 y}{\partial x^2} &=& -\frac{1}{[1 + \zeta'(y)(h(t) - h_0)]^3} \equiv B(h(t),y),
  \\
  -\frac{1}{h'(t)} \frac{\partial y}{\partial t} &=& \frac{\zeta(y)}{1 + \zeta'(y)(h(t) - h_0)} \equiv C(h(t),y).
\end{eqnarray*}
Defining
\begin{eqnarray*}
 u(t,x) = u(t,y +\zeta(y)(h(t) - h_0)) = \omega (t,y),
 \\
 v_1(t,x) = v_1(t,y +\zeta(y)(h(t) - h_0)) = z_1 (t,y),
 \\
 v_2(t,x) = v_2(t,y +\zeta(y)(h(t) - h_0)) = z_2 (t,y),
\end{eqnarray*}
then
\begin{equation*}
 u_t = \omega_t - h'(t) C(h(t),y) \omega_y,\quad u_x = \sqrt{A(h(t),y) }\omega_y,
\end{equation*}
\begin{equation*}
 u_{xx} = A(h(t),y) \omega_{yy} + B(h(t),y) \omega_y.
\end{equation*}
Hence the free boundary problem \eqref{one-free-boundary-eq} becomes
\begin{eqnarray}\label{new-FBP2}
\begin{cases}
\omega_t =  A\omega_{yy} + (B + h'C)\omega_y - \chi_1  A \omega_y  z_{1,y} + \chi_2  A \omega_y  z_{2,y}
\\
\qquad - \chi_1 \omega (\lambda_1z_1 -\mu_1\omega) + \chi_2 \omega (\lambda_2z_2 -\mu_2\omega) + \omega (a - b\omega), & y\in (0,h_0)
\\
 0 = Az_{1,yy} + Bz_{1,y} + \mu_1\omega - \lambda_1z_1, &  y\in (0,h_0)
 \\
 0 = Az_{2,yy} + Bz_{2,y} + \mu_2\omega - \lambda_2z_2, & y\in (0,h_0)
 \\
 h'(t) = -\nu \omega_y(t,h_0)
\\
\omega_y (t,0)= z_{1y}(t,0)= z_{2y}(t,0)=0
\\
 \omega(t,h_0)=z_{1y}(t,h_0)= z_{2y}(t,h_0) = 0,
\end{cases}
\end{eqnarray}
where $A = A(h(t),y), B = B(h(t),y),$ and $C = C(h(t),y)$.

Next, denote $h_0^1 = -\nu u_0'(h_0)$, and for $0<T\leq [\frac{h_0}{8(1+h_0^1)}]$, define $\Delta_T = [0,T]\times [0,h_0]$,
\begin{eqnarray*}
 \mathcal{D}_{1T} &=& \{\omega \in C(\Delta_T)\,|\, \omega(0,y) = u_0(y), \|\omega - u_0\|_{C(\Delta_T)} \leq 1\},
 \\
 \mathcal{D}_{2T} &=& \{h \in C^{1}([0,T])\,|\,  h(0) = h_0, h'(0)=h_0^1,  \|h' - h_0^1\|_{C([0,T])
 } \leq 1\}.
\end{eqnarray*}
It is easily seen that $\mathcal{D}:= \mathcal{D}_{1T}\times\mathcal{D}_{2T}$ is a complete metric space with the metric
\begin{equation*}
 d((\omega_1, h_1), (\omega_2, h_2)) = \|\omega_1 - \omega_2\|_{C(\Delta_T)} + \| h_1' - h_2'\|_{C([0,T])}.
\end{equation*}
Let us note that for $h_1,h_2 \in \mathcal{D}_{2T},$ due to $h_1(0) = h_2(0) = h_0$,
\begin{equation}\label{Est-h}
 \| h_1 - h_2\|_{C([0,T])} \leq T\|h_1' - h_2'\|_{C([0,T])}.
\end{equation}
We shall prove the existence and uniqueness result by using the contraction mapping theorem.

 To this end, we  first observe that, due to the choice of $T$, for any given $(\omega,h) \in \mathcal{D}_{1T} \times\mathcal{D}_{2T},$ we have
\begin{equation*}
 |h(t) - h_0|\leq T(1 + h_0^1) \leq \frac{h_0}{8}.
\end{equation*}
Therefore the transformation $(t,y)\rightarrow(t,x)$ introduced at the beginning of the proof is well defined. Applying standard $L^p$ theory (see \cite[Theorem 9.2.5]{Wu2006elliptic}), existence theorem (see local existence result in \cite{Tello2007chemotaxis}, \cite{Tao2013competing}  or semigroup approach developed in \cite{Amann1995linear} (Theorem 5.2.1)) and the Sobolev imbedding theorem \cite{Ladyzenskaja1968linear}, we find that for any $(\omega,h)\in \mathcal{D}$, the following initial boundary value problem
\begin{eqnarray}\label{new-FBP3}
\begin{cases}
\overline{\omega}_t =  A\overline{\omega}_{yy} + (B + h'C)\overline{\omega}_y - \chi_1  A \overline{\omega}_y  \overline{z}_{1,y} + \chi_2  A \overline{\omega}_y  \overline{z}_{2,y}
\\
\quad - \chi_1 \omega (\lambda_1\overline{z}_1 -\mu_1\omega) + \chi_2 \omega (\lambda_2\overline{z}_2 -\mu_2\omega) + \omega (a - b\omega) & y\in (0,h_0)
\\
 0 = A\overline{z}_{1,yy} + B\overline{z}_{1,y} + \mu_1\omega - \lambda_1\overline{z}_1, & y\in (0,h_0)
 \\
 0 = A\overline{z}_{2,yy} + B\overline{z}_{2,y} + \mu_2\omega - \lambda_2\overline{z}_2, & y\in (0,h_0)
\\
\overline{\omega}_y(t,0) = \overline{z}_{1y}(t,0)= \overline{z}_{2y}(t,0) =0
\\
 \overline{\omega}(t,h_0)=\overline{z}_{1y}(t,h_0)= \overline{z}_{2y}(t,h_0)= 0
 \\
\overline{\omega}(y,0) = u_0(y),& 0\leq y\leq h_0,
\end{cases}
\end{eqnarray}
admits a unique solution $(\overline{\omega}, \overline{z}_1,\overline{z}_2)\in { C^{\alpha/2,1+\alpha}(\Delta_T)}\times C^{0,\alpha}(\Delta_T)\times C^{0,\alpha}(\Delta_T)$, and
\begin{equation}\label{Est-Omega}
 \|\overline{\omega}\|_{{ C^{\alpha/2,1+\alpha}(\Delta_T)}}\leq C_1,
\end{equation}
where $C_1$ is a constant depending  on $h_0, \alpha,$ and $\|u_0\|_{C^2[0,h]}$.

Defining
\begin{equation}\label{Eq-h}
 \overline{h}(t) = h_0 - \int_0^t \nu \overline{\omega}_y(\tau,h_0)d\tau,
\end{equation}
we have
\begin{equation*}
 \overline{h}'(t) = -\nu\overline{\omega}_y(t,h_0),\quad \overline{h}(0) = h_0,\quad \overline{h}'(0) = -\nu\overline{\omega}_y(0,h_0) = h_0^1,
\end{equation*}
and hence $\overline{h}'\in C^{\alpha/2}([0,T])$ with
\begin{equation}\label{Est-h}
 \|\overline{h}'\|_{C^{\alpha/2}([0,T])} \leq C_2 := \nu C_1.
\end{equation}
Define $\mathcal{F}: \mathcal{D} \rightarrow C(\Delta_T)\times C^1([0,T])$ by
\begin{equation*}
 \mathcal{F}(\omega,h) = (\overline{\omega},\overline{h}).
\end{equation*}
Clearly $(\omega,h)\in \mathcal{D}$ is a fixed point of $\mathcal{F}$ if and only if it solves \eqref{new-FBP3}.

By \eqref{Est-Omega} and \eqref{Est-h}, we have
\begin{eqnarray*}
 \|\overline{h}' - h_0^1\|_{C([0,T])} \leq \|\overline{h}'\|_{C^{\alpha/2}([0,T])} T^{\alpha/2} \leq \nu C_1T^{\alpha/2},
 \\
 \|\overline{\omega} - u_0\|_{C(\Delta_T)} \leq \|\overline{\omega} - u_0\|_{{ C^{\alpha/2,0}(\Delta_T)}}T^{\alpha/2} \leq C_1 T^{\alpha/2}.
\end{eqnarray*}
Therefor if we take $T\leq\min\{(\nu C_1)^{-2/\alpha}, { C_1^{-2/\alpha}}\},$ then $\mathcal{F}$ maps $\mathcal{D}$ into itself.

Next we prove that $\mathcal{F}$ is a contraction mapping on $\mathcal{D}$ for $T>0$ sufficiently small. Let $(\omega_i,h_i) \in \mathcal{D} (i=1,2)$ and denote $(\overline{\omega}_i,\overline{h}_i) = \mathcal{F}(\omega_i,h_i)$. Then it follows from \eqref{Est-Omega} and \eqref{Est-h} that
\begin{equation*}
 \|\overline{\omega}_i\|_{{ C^{\alpha/2, 1+\alpha}(\Delta_T)}} \leq C_1,\quad \|\overline{h}_i'(t)\|_{C^{\alpha/2}([0,T])} \leq C_2.
\end{equation*}
Choosing two different solutions $(\overline{\omega}_1,\overline{z}_1^1,\overline{z}_2^1,\omega_1,h_1),(\overline{\omega}_2,\overline{z}_1^2,\overline{z}_2^2,\omega_2,h_2)$ and setting $U=\overline{\omega}_1 -\overline{\omega}_2, Z_1 = \overline{z}^1_1 - \overline{z}^2_1, Z_2 = \overline{z}^1_2 - \overline{z}^2_2$, we find that $U, Z_1,$ and $Z_2$ satisfy
\begin{align*}
\begin{cases}
&U_t - A(h_2,y)U_{yy} - [B(h_2,y) + h_2'(t)C(h_2,y)]U_y
\\
&= [A(h_1,y)-A(h_2,y)]\overline{\omega}_{1,yy} + [B(h_1,y)- B(h_2,y) + h_1'C(h_1,y) - h_2'C(h_2,y)]\overline{\omega}_{1,y}
\\
&\quad  + [a -\chi_1\lambda_1 \overline{z}^2_1 +\chi_2\lambda_2\overline{z}^2_2 -(b-\chi_1\mu_1+\chi_2\mu_2)(\omega_1 +\omega_2)](\omega_1-\omega_2)
\\
&\quad  +[\chi_1\lambda_1(\overline{z}^2_1-\overline{z}^1_1) + \chi_2\lambda_2(\overline{z}^1_2-\overline{z}^2_2]\omega_1,\qquad\qquad\qquad\qquad \quad y\in (0,h_0)
\\
& 0= A(h_1,y)Z_{1,yy} + (A(h_1,y)-A(h_2,y)\overline{z}^2_{1,yy} + B(h_1,y)Z_{1,y} + [B(h_1,y)-B(h_2,y)]\overline{z}^2_{1,y}
\\
&\quad  + \mu_1(\omega_1 -\omega_2) -\lambda_1Z_1, \qquad\qquad \qquad\qquad\qquad\qquad\qquad \quad y\in (0,h_0)
\\
&0= A(h_1,y)Z_{2,yy} + (A(h_1,y)-A(h_2,y)\overline{z}^2_{2,yy} + B(h_1,y)Z_{2,y} + [B(h_1,y)-B(h_2,y)]\overline{z}^2_{2,y}
\\
&\quad  + \mu_2(\omega_1 -\omega_2) -\lambda_2Z_2, \qquad\qquad \qquad\qquad\qquad\qquad \qquad\quad y\in (0,h_0)
\\
& U_y(t,0) = Z_{1y}(t,0)= Z_{2y}(t,0) =0
\\
& U(t,h_0)=Z_{1y}(t,h_0)= Z_{2y}(t,h_0)= 0
 \\
&U(0,y) = 0,\quad  0\leq y\leq h_0.
\end{cases}
\end{align*}
Using the $L^p$ estimates for elliptic and parabolic equations and Sobolev's imbedding theorem again, we obtain
\begin{eqnarray}
\|Z_1\|_{C^{0, \alpha}(\Delta_T)} &\leq& C_3^1(\|\omega_1 - \omega_2\|_{C(\Delta_T)} + \|h_1 - h_2\|_{C^{1}([0,T])})
\\
\|Z_2\|_{C^{0, \alpha}(\Delta_T)} &\leq& C_3^2(\|\omega_1 - \omega_2\|_{C(\Delta_T)} + \|h_1 - h_2\|_{C^{1}([0,T])})
\\
\|\overline{\omega}_1 -\overline{\omega}_2\|_{{ C^{\alpha/2, 1+\alpha}(\Delta_T)}} &\leq& C_3^3(\|\omega_1 - \omega_2\|_{C(\Delta_T)} + \|h_1 - h_2\|_{C^1([0,T])}),\label{Est-Omega2}
\end{eqnarray}
where $C_3^i, i=1,2,3$ depends on $C_1, C_2$,  and the functions $A,B$, and $C$ are in the definitions of the transformation $(t,y)\rightarrow (t,x)$. Taking the difference of the equation for $\overline{h}_1, \overline{h}_2$ results in
\begin{equation}\label{Est-h2}
 \|\overline{h}_1' - \overline{h}_2'\|_{C^{\alpha/2}([0,T])}\leq \nu (\|\overline{\omega}_{1y} - \overline{\omega}_{2y}\|_{C^{\alpha/2,0}(\Delta_T)}).
\end{equation}
Combining \eqref{Est-h}, \eqref{Est-Omega2} and \eqref{Est-h2}, and assuming $T\leq 1$, we obtain
\begin{eqnarray*}
 \|\overline{\omega}_1 -\overline{\omega}_2\|_{{ C^{\alpha/2, 1+\alpha}(\Delta_T)}}
 + \|\overline{h}_1' -\overline{h}_2'\|_{C^{\alpha/2}([0,T])}
 \\
 \leq C_4 (\|\omega_1 -\omega_2\|_{C(\Delta_T)} + \|{ h'_1 - h'_2}\|_{C[0,T]})
\end{eqnarray*}
with $C_4$ depending on $C_3$ and $\nu$. Hence for
\begin{equation*}
 T :=\min\{1,(\frac{1}{2C_4})^{2/\alpha}, (\nu C_1)^{-2/\alpha}, { C_1^{-2/\alpha}}, \frac{h_0}{8(1+h_1)}\}
\end{equation*}
we have
\begin{eqnarray*}
 &&\|\overline{\omega}_1 - \overline{\omega}_2\|_{C(\Delta)} + \|\overline{h}_1' - \overline{h}_2'\|_{C([0,T])}
 \\
 &&\leq { T^{\alpha/2}}\|\overline{\omega}_1 -\overline{\omega}_2\|_{{ C^{\alpha/2, 1+\alpha}(\Delta_T)}} + T^{\alpha/2}\|\overline{h}_1' - \overline{h}_2'\|_{C^{\alpha/2}([0,T])}
 \\
 &&\leq C_4 T^{\alpha/2} (\|\omega_1 - \omega_2\|_{C(\Delta_T)} + \| h_1' - h_2'\|_{C([0,T])})
 \\
 &&\leq \frac{1}{2} (\|\omega_1 - \omega_2\|_{C(\Delta_T)} + \| h_1' - h_2'\|_{C([0,T])}).
\end{eqnarray*}
The above shows that $\mathcal{F}$ is a contraction mapping on $\mathcal{D}$ for this $T$. It follows from the contraction mapping theorem that $\mathcal{F}$ has a unique fixed point $(\omega,h)$ in $\mathcal{D}$. In other word, $(\omega(t,y), z_1(t,y), z_2(t,y),h(t))$ is a unique local solution of the problem \eqref{new-FBP2}.
\end{proof}

Let $(u(t,x;u_0,h_0),v_1(t,x;u_0,h_0), v_2(t,x;u_0,h_0),h(t;u_0,h_0))$ be the solution of \eqref{one-free-boundary-eq}
with $u(0,\cdot;u_0,h_0)=u_0(\cdot)\in C^2([0,h_0])$ and $h(0;u_0, h_0)=h_0(>0)$ for $t\in [0,T]$.
The second lemma is on the estimates of $v_1$ and $v_2$.

\begin{lem}
\label{apriori-estimate-lm1}
Assume {\bf (H1)} holds. Suppose that $(u(t,x),v_1(t,x),v_2(t,x),h(t))$ is a nonnegative solution of \eqref{one-free-boundary-eq}
on $[0,T]$ with $u(0,\cdot)=u_0(\cdot)$, which satisfies \eqref{initial-cond-eq}. Then
$$
(\chi_2 \lambda_2 v_2-\chi_1 \lambda_1 v_1)(x,t;u_0,h_0)\leq M \|u(t,\cdot)\|_\infty\quad \forall\, t\in [0,T],
$$
where $M$ is as in \eqref{m-eq}.
In particular, if $\|u(t,\cdot)\|_\infty\le \max\{\|u_0\|_\infty,M_0\}$ for $0\le t\le T$, then
$$
(\chi_2 \lambda_2 v_2-\chi_1 \lambda_1 v_1)(x,t;u_0,h_0)\leq M \max\{\|u_0\|_\infty, M_0\}\quad \forall\, t\in [0,T],
$$
where $M_0$ is as in \eqref{M0-eq}.
\end{lem}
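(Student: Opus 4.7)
The plan is to prove the pointwise bound by applying the elliptic maximum principle to the auxiliary function $p(x) := (\chi_2\lambda_2 v_2 - \chi_1\lambda_1 v_1)(t,x)$ at each fixed $t\in[0,T]$, where $v_1(t,\cdot)$ and $v_2(t,\cdot)$ satisfy two-point BVPs on $[0,h(t)]$ with Neumann data. The key observations are that $p$ inherits Neumann boundary conditions at both $x=0$ and $x=h(t)$ (directly from the BCs on $v_{1,x}$ and $v_{2,x}$), and that one can massage $p_{xx}-\lambda_* p$ into a manageable form by choosing $\lambda_* \in \{\lambda_1,\lambda_2\}$.

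First I would record the standard one-sided bounds $0 \le v_i(t,x) \le \frac{\mu_i}{\lambda_i}\|u(t,\cdot)\|_\infty$ for $i=1,2$, obtained from the scalar maximum principle applied to each elliptic equation $v_{i,xx} - \lambda_i v_i = -\mu_i u \le 0$ with Neumann data (Hopf's lemma prevents an interior sign change; test against $w = v_i$ and $w = v_i - \frac{\mu_i}{\lambda_i}\|u\|_\infty$). Next, using the two equations for $v_1, v_2$, compute
\begin{equation*}
p_{xx} = \chi_2\lambda_2^2 v_2 - \chi_1\lambda_1^2 v_1 - (\chi_2\mu_2\lambda_2 - \chi_1\mu_1\lambda_1)\, u.
\end{equation*}
Subtracting $\lambda_2 p$ from both sides rewrites this as
\begin{equation*}
p_{xx} - \lambda_2 p = -\chi_1\lambda_1(\lambda_1 - \lambda_2) v_1 - (\chi_2\mu_2\lambda_2 - \chi_1\mu_1\lambda_1)\, u,
\end{equation*}
while subtracting $\lambda_1 p$ gives
\begin{equation*}
p_{xx} - \lambda_1 p = \chi_2\lambda_2(\lambda_2 - \lambda_1) v_2 - (\chi_2\mu_2\lambda_2 - \chi_1\mu_1\lambda_1)\, u.
\end{equation*}

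Using $0 \le v_1 \le \frac{\mu_1}{\lambda_1}\|u\|_\infty$, the first identity yields the lower bound $p_{xx} - \lambda_2 p \ge -\bigl[(\chi_2\mu_2\lambda_2 - \chi_1\mu_1\lambda_1)_+ + \chi_1\mu_1(\lambda_1-\lambda_2)_+\bigr]\|u\|_\infty$ (splitting into the two sign cases of $\lambda_1-\lambda_2$). Setting $C_{(2)} := \frac{1}{\lambda_2}\bigl[(\chi_2\mu_2\lambda_2 - \chi_1\mu_1\lambda_1)_+ + \chi_1\mu_1(\lambda_1-\lambda_2)_+\bigr]$, the function $q := p - C_{(2)}\|u(t,\cdot)\|_\infty$ satisfies $q_{xx} - \lambda_2 q \ge 0$ with $q_x=0$ at $x=0$ and $x=h(t)$. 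By the elliptic maximum principle together with Hopf's lemma, $q$ cannot attain a positive maximum in $[0,h(t)]$ (the interior case contradicts $q_{xx}\ge\lambda_2 q>0$ at an interior max, and boundary maxima are excluded by the zero Neumann condition unless $q$ is constant, in which case $\lambda_2 q\le 0$). Hence $p \le C_{(2)}\|u(t,\cdot)\|_\infty$. The analogous argument applied to the second identity, now using $0 \le v_2 \le \frac{\mu_2}{\lambda_2}\|u\|_\infty$, produces $p \le C_{(1)}\|u(t,\cdot)\|_\infty$ with $C_{(1)} := \frac{1}{\lambda_1}\bigl[(\chi_2\mu_2\lambda_2 - \chi_1\mu_1\lambda_1)_+ + \chi_2\mu_2(\lambda_1-\lambda_2)_+\bigr]$. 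Taking the minimum recovers exactly $M$ from \eqref{m-eq}, giving the claimed bound; the in-particular statement is immediate by inserting the hypothesis $\|u(t,\cdot)\|_\infty\le\max\{\|u_0\|_\infty,M_0\}$.

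The main subtle point is the splitting into the two cases $\lambda_*=\lambda_2$ and $\lambda_*=\lambda_1$ so that the residual zero-order term has the right sign once the one-sided $v_i$ bounds are applied. The maximum-principle step itself is routine, but I need to be careful that the Neumann boundary conditions on both $v_1$ and $v_2$ (including at the moving endpoint $x=h(t)$, where they are part of the free-boundary data in \eqref{one-free-boundary-eq}) are what allow Hopf's lemma to rule out boundary maxima. Assembling the two cases and taking the minimum gives the stated constant $M$.
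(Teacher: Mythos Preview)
Your proof is correct but takes a different route from the paper's. The paper represents each $v_i$ via the Neumann heat semigroup on $(0,h(t))$ as $v_i=\mu_i\int_0^\infty e^{-\lambda_i s}T(s)u(t,\cdot)\,ds$, then algebraically decomposes $\chi_2\lambda_2 v_2-\chi_1\lambda_1 v_1$ in two ways (pulling out $e^{-\lambda_2 s}$ or $e^{-\lambda_1 s}$) and uses the contraction $T(s)u\le\|u\|_\infty$ to evaluate the resulting integrals; this yields exactly the two constants $C_{(2)}$ and $C_{(1)}$ whose minimum is $M$. You instead work directly with the elliptic PDE satisfied by $p=\chi_2\lambda_2 v_2-\chi_1\lambda_1 v_1$, rewrite $p_{xx}-\lambda_* p$ for $\lambda_*\in\{\lambda_1,\lambda_2\}$, bound the source using the elementary barriers $0\le v_i\le(\mu_i/\lambda_i)\|u\|_\infty$, and finish with the scalar maximum principle plus Hopf's lemma at the Neumann endpoints. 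Both arguments are ultimately encoding the positivity of the resolvent $(-\partial_{xx}+\lambda_*)^{-1}$ with Neumann data; your version is more elementary (no semigroup machinery, just a second-order ODE comparison), while the paper's integral representation is the same device used in the cited reference \cite{Salako2017Global} and transfers verbatim to the later Lemma~\ref{thm2-lm1} on gradient bounds, where an explicit kernel is convenient.
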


\begin{proof} It can be proved by the similar arguments as those in \cite[Lemma 2.2]{BaoShen1}. For completeness, we provide a proof in the
following.

Note that $v_1(t,x)$ is the solution of
$$
\begin{cases}
v_{1,xx}-\lambda_1 v_1+\mu_1 u(t,x)=0,\quad 0<x<h(t)\cr
v_{1x}(t,0)=v_{1x}(t,h(t))=0
\end{cases}
$$
and $v_2(t,x)$ is the solution of
$$
\begin{cases}
v_{2,xx}-\lambda_2 v_2+\mu_2 u(t,x)=0,\quad 0<x<h(t)\cr
v_{2x}(t,0)=v_{2x}(t,h(t))=0.
\end{cases}
$$
Let $T(s)$ be the semigroup generated by $\partial_{xx}$ on $(0,h(t))$ with Neumann boundary condition.
Then
$$
v_i(t,\cdot)=\mu_i \int_0^ \infty e^{-\lambda_i s} T(s) u(t,\cdot)ds,\quad i=1,2.
$$
We then have
\begin{align*}
(\chi_2 \lambda_2 v_2-\chi_1 \lambda_1 v_1)(x,t;u_0)&=\chi_2 \lambda_2 \mu_2 \int_0^ \infty e^{-\lambda_2 s} T(s) u(t,\cdot)ds-
\chi_1 \lambda_1 \mu_1 \int_0^ \infty e^{-\lambda_1 s} T(s) u(t,\cdot)ds\\
&=\big(\chi_2 \lambda_2 \mu_2-\chi_1\lambda_1\mu_1\big) \int_0^ \infty e^{-\lambda_2 s} T(s) u(t,\cdot)ds\\
&\,\, \,\,  +
\chi_1 \lambda_1 \mu_1 \int_0^ \infty \big(e^{-\lambda_2 s}-e^{-\lambda_1 s}\big) T(s) u(t,\cdot)ds\\
& \le \big(\chi_2 \lambda_2 \mu_2-\chi_1\lambda_1\mu_1\big)_+ \int_0^ \infty e^{-\lambda_2 s} T(s) u(t,\cdot)ds\\
&\,\, \,\,  +
\chi_1 \lambda_1 \mu_1 \int_0^ \infty \big(e^{-\lambda_2 s}-e^{-\lambda_1 s}\big)_+ T(s) u(t,\cdot)ds\\
\end{align*}
Note that
$$
T(s)u(t,\cdot)\le T(s) \|u(t,\cdot)\|=\|u(t,\cdot)\|_\infty.
$$
Hence
\begin{align*}
(\chi_2 \lambda_2 v_2-\chi_1 \lambda_1 v_1)(x,t;u_0,h_0)&\le  \big(\chi_2 \lambda_2 \mu_2-\chi_1\lambda_1\mu_1\big)_+ \int_0^ \infty e^{-\lambda_2 s} \|u(t,\cdot)\|_\infty ds\\
&\,\,\,\,  +
\chi_1 \lambda_1 \mu_1 \int_0^ \infty \big(e^{-\lambda_2 s}-e^{-\lambda_1 s}\big)_+ \|u(t,\cdot)\|_\infty ds\\
&= \frac{\|u(t,\cdot)\|_\infty}{\lambda_2}\Big((\chi_2\lambda_2\mu_2-\chi_1\lambda_1\mu_1)_{+}+\chi_1\mu_1(\lambda_1-\lambda_2)_{+}\Big).
\end{align*}

Similarly, we can prove that
$$
 (\chi_2 \lambda_2 v_2-\chi_1 \lambda_1 v_1)(x,t;u_0,h_0)\le \frac{\|u(t,\cdot)\|_\infty}{\lambda_{1}}\Big( \chi_2\mu_2(\lambda_1-\lambda_2)_{+} + (\chi_2\mu_2\lambda_2-\chi_1\mu_1\lambda_1)_{+} \Big).
$$
The lemma then follows.
\end{proof}

The third  lemma is on the estimate of $u(t,x;u_0,h_0)$.

\begin{lem}
\label{apriori-estimate-lm2}
If ({\bf H1}) holds. For given $T>0$, $h_0>0$, and $u_0\in C^2([0,h_0])$ with $u_0(x)\ge 0$ for $x\in [0,h_0]$, $u_0^{'}(0)=0$, and $u_0(h_0)=0$, assume that
\eqref{one-free-boundary-eq} has a unique solution $(u(t,x),v_1(t,x),v_2(t,x),h(t))$ on $t\in [0,T]$ with $u(0,x)=u_0(x)$ and
$h(0)=h_0$. Then
$$
\|u(t,\cdot)\|\infty\le \max\{\|u_0\|_\infty, M_0\}\quad \forall\, \, 0\le t\le T,
$$
where $M_0$ is as in \eqref{M0-eq}. If $\|u(t,\cdot)\|_\infty>M_0$ on $[0,T]$, then
$\|u(t,\cdot)\|_\infty$ is decreasing in $t$.
\end{lem}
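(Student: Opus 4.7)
\medskip
\noindent\textbf{Proof proposal.} The plan is to convert the chemotaxis-driven PDE for $u$ into a scalar parabolic inequality by eliminating $v_{1,xx}, v_{2,xx}$, and then to run a standard maximum principle / ODE-comparison argument on $\bar u(t):=\|u(t,\cdot)\|_\infty$ using Lemma \ref{apriori-estimate-lm1}.

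First, I would expand the first equation in \eqref{one-free-boundary-eq} and substitute the elliptic identities $v_{1,xx}=\lambda_1 v_1-\mu_1 u$ and $v_{2,xx}=\lambda_2 v_2-\mu_2 u$. This produces, on $0<x<h(t)$,
\begin{equation*}
u_t = u_{xx} + \bigl(-\chi_1 v_{1,x}+\chi_2 v_{2,x}\bigr)u_x + u\Bigl[a(t,x) + (\chi_2\lambda_2 v_2-\chi_1\lambda_1 v_1) - (b(t,x)-\chi_1\mu_1+\chi_2\mu_2)u\Bigr],
\end{equation*}
with Neumann condition $u_x(t,0)=0$ and Dirichlet condition $u(t,h(t))=0$. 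Lemma \ref{apriori-estimate-lm1} then bounds the awkward indefinite term by $\chi_2\lambda_2 v_2-\chi_1\lambda_1 v_1\le M\,\|u(t,\cdot)\|_\infty$, which is exactly what makes (H1) relevant: the coefficient of $u^2$ in brackets, after this bound, becomes at most $-(b_{\inf}-\chi_1\mu_1+\chi_2\mu_2-M)<0$.

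Next, I would invoke the parabolic maximum principle for $u$ on the moving domain. Let $\bar u(t)=\max_{x\in[0,h(t)]}u(t,x)$; since $u(t,h(t))=0$, any positive max is attained at some $x_t\in[0,h(t))$, where $u_x(t,x_t)=0$ (either interior or via the Neumann condition at $x=0$) and $u_{xx}(t,x_t)\le 0$. Evaluating the rewritten PDE at $(t,x_t)$ and using Lemma \ref{apriori-estimate-lm1} gives, in the Dini/almost-everywhere sense,
\begin{equation*}
\frac{d^+\bar u}{dt}(t)\;\le\;\bar u(t)\Bigl[a_{\sup}-(b_{\inf}-\chi_1\mu_1+\chi_2\mu_2-M)\,\bar u(t)\Bigr].
\end{equation*}
By the definition \eqref{M0-eq}, the bracket is negative precisely when $\bar u(t)>M_0$. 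Hence $\bar u$ is strictly decreasing on any interval where $\bar u>M_0$, which immediately yields both conclusions: (i) $\bar u(t)\le\max\{\|u_0\|_\infty,M_0\}$ on $[0,T]$, because $\bar u$ cannot cross the level $\max\{\|u_0\|_\infty,M_0\}$ from below (if it ever touches it from above, it must have been decreasing); and (ii) if $\bar u(t)>M_0$ throughout $[0,T]$, then the ODE inequality forces $\bar u$ to be decreasing on all of $[0,T]$.

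The main technical subtlety is justifying the evaluation ``at a maximum point'' rigorously, given that $x_t$ may jump and that the max may sit at the Neumann endpoint $x=0$. I would handle this by the standard device of perturbing with $\varepsilon e^{Kt}$ for suitable $K$ to force a strict maximum in the interior of $[0,h(t))\times[0,T]$, invoking the Hopf lemma at $x=0$ to exclude a boundary max there with $u_x>0$, and then sending $\varepsilon\to 0$; equivalently, one may apply a comparison with the ODE $U'=U[a_{\sup}-(b_{\inf}-\chi_1\mu_1+\chi_2\mu_2-M)U]$ whose nonzero equilibrium is $M_0$ and whose solutions decrease toward $M_0$ from above. Either way, the reduction of the PDE to this ODE-comparison is the heart of the argument, and (H1) is exactly the hypothesis that keeps the coefficient $b_{\inf}-\chi_1\mu_1+\chi_2\mu_2-M$ strictly positive so that $M_0$ is finite and attracting.
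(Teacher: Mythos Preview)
Your argument is correct, but the paper takes a different route. Rather than tracking $\bar u(t)=\|u(t,\cdot)\|_\infty$ via a Dini-derivative/ODE comparison, the paper sets $C_0=\max\{\|u_0\|_\infty,M_0\}$, introduces the closed convex set
\[
\mathcal{E}=\{u\in C^b_{\rm unif}\,:\,u(0,\cdot)=u_0,\ 0\le u\le C_0\},
\]
and, for each $u\in\mathcal{E}$, solves the elliptic equations for $V_1,V_2$ with source $u$ and then the parabolic equation for $U$ with those $V_i$ frozen. Lemma~\ref{apriori-estimate-lm1} gives $(\chi_2\lambda_2 V_2-\chi_1\lambda_1 V_1)\le M C_0$, and since $a+MC_0-(b-\chi_1\mu_1+\chi_2\mu_2)C_0\le 0$, the constant $C_0$ is a supersolution for $U$; hence $U\in\mathcal{E}$. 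A Schauder-type fixed-point argument (continuity and compactness of $u\mapsto U(\cdot,\cdot;u)$) then yields a fixed point $u^*\in\mathcal{E}$, which by uniqueness coincides with the actual solution, so $u\le C_0$. The monotonicity claim is obtained as an immediate corollary by restarting the bound at an arbitrary $t_1$.

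Both arguments rest on the same rewriting of the $u$-equation and the same use of Lemma~\ref{apriori-estimate-lm1} and {\bf (H1)}. Your direct maximum-principle route is more elementary (no fixed-point theorem) and delivers the decreasing property in one stroke from the logistic ODE inequality; its cost is the care you correctly flag about justifying $d^+\bar u/dt\le u_t(t,x_t)$ on the moving domain. The paper's freeze-and-fixed-point approach sidesteps that regularity issue entirely and doubles as a construction scheme, at the price of invoking compactness and an external reference for the map's properties.
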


\begin{proof}
Let $C_0=\max\{\|u_0\|_\infty,M_0\}$ and
$$\mathcal{E}:=\{u\in  C_{\rm unif}^b([0,h(t)]\times[0, T])\,|\, u(\cdot,0)=u_0, 0\leq u(x,t)\leq C_0, x\in [0,h(t)], 0\leq t\leq T\}
  $$
  equipped with the norm
$$
\|u(\cdot,\cdot)\|=\sup_{0\le t\le T,0\le x\le h(t)} |u(t,x)|.
$$
It is clear that $\mathcal{E}$ is closed and convex.

For any given $u\in \mathcal{E}$, let $V_i(t,x;u)$ $(i=1,2)$  be the solution of
$$
\begin{cases}
0=V_{i,xx}-\lambda_i V_i+\mu_i u(t,x),\quad 0<x<h(t)\cr
V_{i,x}(t,0)=V_{i,x}(t,h(t))=0.
\end{cases}
$$
Let $U(t,x;u)$ be the solution of
$$
\begin{cases}
U_t=U_{xx}-\chi_1 U_x V_{1,x}+\chi_2 U_x V_{2,x}\cr
\qquad + U\big(a-(\chi_1\lambda_1 V_1-\chi_2 \lambda_2 V_2)-
(b-\chi_1 \mu_1+\chi_2\mu_2) U\big),\,\, 0<x<h(t)\cr
U_x(t,0)=U(t,h(t))=0\cr
U(0,x)=u_0(x),\quad 0\le x\le h(0).
\end{cases}
$$
By the arguments of Lemma \ref{apriori-estimate-lm1},
$$
U_t\le U_{xx}-\chi_1 U_x V_{1,x}+\chi_2 U_x V_{2,x} + U\big(a+M C_0-
(b-\chi_1 \mu_1+\chi_2\mu_2) U\big),\,\, 0<x<h(t).
$$
Note that
$$
a+MC_0 -(b-\chi_1\mu_1+\chi_2\mu_2)C_0\le 0.
$$
Then by comparison principle for parabolic equations,
$$
U(t,x;u)\le C_0\quad \forall \, 0\le t\le T,\,\, 0\le x\le h(t).
$$
Hence $U(\cdot,\cdot;u)\in \mathcal{E}$.

By the similar arguments as those in  \cite[Lemma 4.3]{Salako2016spreading}, we can prove that the mapping
$\mathcal{E}\ni u\to U(\cdot,\cdot;u)\in\mathcal{E}$ is continuous and compact.
Then there is $u^*\in\mathcal{E}$ such that
$U(t,x;u^*)=u^*(t,x)$. This implies that
$u(t,x)=u^*(t,x)$ for $0\le x\le h(t)$ and $0\le t<T_{\max}(u_0,h_0)$, and then
$$
u(t,x)\le C_0\quad \forall\,\, 0\le t\le T,\,\, 0\le x\le h(t).
$$

Now if $\|u(t,\cdot)\|_\infty>M_0$ for $t\in [0,T]$, then for any $0\le t_1\le t_2\le T$,
$$
\|u(t_2,\cdot)\|_\infty\le \max\{\|u(t_1,\cdot)\|_\infty,M_0\}=\|u(t_1,\cdot)\|_\infty.
$$
The lemma is thus proved.
\end{proof}

We now prove Theorem \ref{free-boundary-thm1}. Without loss of generality, we prove the case that $t_0=0$,
and we denote by $(u(t,x;u_0,h_0),v_1(t,x;u_0,h_0),v_2(t,x;u_0,h_0),h(t;u_0,h_0))$ the solution of \eqref{one-free-boundary-eq}
with $u(0,x;u_0,h_0)=u_0(x)$ and $h(0;u_0,h_0)=h_0(x)$.

\begin{proof}[Proof of  Theorem \ref{free-boundary-thm1}]
Suppose that $[0,T_{\max}(u_0,h_0))$ is the maximal interval of existence of the solution $(u(t,x;u_0,h_0),v_1(t,x;u_0,h_0),v_2(t,x;u_0,h_0),h(t;u_0,h_0))$. Let $h(t)=h(t;u_0,h_0)$, and
$(u(t,x),v_1(t,x),v_2(t,x))=(u(t,x;u_0,h_0),v_1(t,x;u_0,h_0),v_2(t,x;u_0,h_0))$. Then $(u,v_1,v_2)$ satisfies
\begin{equation}
\label{gl-eq2}
\begin{cases}
u_t=u_{xx}-\chi_1 u_x v_{1,x}+\chi_2 u_x v_{2,x}\cr
\qquad + u\big(a-(\chi_1\lambda_1 v_1-\chi_2 \lambda_2 v_2)-
(b-\chi_1 \mu_1+\chi_2\mu_2) u\big),\,\, 0<x<h(t)\cr
0=v_{1,xx}-\lambda_1 v_1+\mu_1 u,\quad 0<x<h(t)\cr
0=v_{2,xx}-\lambda_2 v_2 +\mu_2 u,\quad 0<x<h(t)\cr
u_x(t,0)=u(t,h(t))=v_{1,x}(t,0)=v_{1,x}(t,h(t))=v_{2,x}(t,0)=v_{2,x}(t,h(t))=0\cr
u(0,x)=u_0(x),\quad 0\le x\le h(0).
\end{cases}
\end{equation}

First, by Lemma \ref{apriori-estimate-lm2},
\begin{equation}
\label{aux-new-eq1}
u(t,x)\le C_0\quad \forall \, t\in [0,T_{\max}),\,\, 0\le x\le h(t),
\end{equation}
where $C_0=\max\{\|u_0\|_\infty,M_0\}$.

Next, we show that there is $\tilde M>0$ such that
\begin{equation}
\label{aux-new-eq2}
h^{'}(t)\le \tilde M\quad \forall\, \, t\in [0, T_{\max}).
\end{equation}
In order to do so, for given $M_1>0$,  define
\begin{equation*}
 \Omega = \Omega_{M_1} :=\{(t,x)\,|\, 0<t<T, h(t) - M_1^{-1} < x < h(t)\}
\end{equation*}
and construct an auxiliary function
\begin{equation}\label{Eq-aux1}
 \omega(t,x) = C_0[2M_1(h(t) - x) - M_1^2(h(t) - x)^2].
\end{equation}

From \eqref{Eq-aux1}, we obtain that for $(t,x)\in \Omega$,
\begin{eqnarray*}
 \omega_t &=& 2C_0M_1h'(t)(1-M_1(h(t)-x)) \geq 0,
 \\
 \omega_x &=& -2C_0M_1 + 2M_1^2(h(t)-x),
 \\
 -\omega_{xx} &=& 2C_0M_1^2\\
  u(a-bu) &\leq & aC_0.
\end{eqnarray*}
Compared to \eqref{one-free-boundary-eq}, it follows that
\begin{eqnarray}
\omega_t &-& \omega_{xx} + \chi_1 \omega_x v_{1,x} - \chi_2\omega_x v_{2,x} - u(a - (\chi_1\lambda_1 v_1 - \chi_2\lambda_2 v_2)- \chi_2\mu_2 u +\chi_1\mu_1u-bu)\nonumber
\\
&\geq& 2C_0M_1^2 + (\chi_1v_{1,x}  - \chi_2v_{2,x})[-2C_0M_1 + 2M_1^2(h(t)-x)]\nonumber
 \\
&&- u(a  - (\chi_1\lambda_1 v_1 - \chi_2\lambda_2 v_2) - \chi_2\mu_2 u + \chi_1\mu_1u),\label{Eq-comparison-boundary1}
\end{eqnarray}
with $|h(t)-x| \leq M_1^{-1},\|v_{i,x}\|_{L^\infty(\Omega)} \leq \tilde K \|u\|_{L^\infty([0,T]\times\Omega)} \le \tilde  KC_0, (i=1,2)$  for some constant
$\tilde K>0$. With big enough $M_1$ and Lemma \ref{apriori-estimate-lm1} we can obtain that \eqref{Eq-comparison-boundary1} is positive and
$w(0,x)\ge u_0(x)$ for $h_0-\frac{1}{M_1}\le x\le h_0$.
Note that $w(t,h(t))=u(t,h(t))=0$ and $w(t,h(t)-\frac{1}{M_1})=C_0\ge u(t,h(t)-\frac{1}{M_1})$.  We then can apply the maximum principle to $\omega -u$ over $\Omega$ to deduce that $u(t,x) \leq \omega(t,x)$ for $(t,x)\in \Omega$. It then would follow that
\begin{equation*}
 u_x(t,h(t)) \geq \omega_x(t,h(t)) = -2M_1C_0,\quad h'(t) = -\nu u_x(t,h(t)) \leq \tilde M:= \nu 2M_1C_0,
\end{equation*}
which is independent of $T$, which implies  \eqref{aux-new-eq2}.

We show now that $T_{\max} = \infty$. Assume that $T_{\max} <\infty$.
By  the arguments of Lemma \ref{local-existence-lm}, \eqref{aux-new-eq1}, and
 \eqref{aux-new-eq2}, there exists a $\tau >0$ depending only on $C_0$ and $\tilde M$  such that the solution of the problem \eqref{one-free-boundary-eq} with initial time $T_{\max}-\tau/2, h(T_{\max}-\tau/2) = \frac{19 h_0}{16}$ can be extended uniquely to the time $T_{\max} + \tau/2$ and $h(T_{\max} + \tau/4) > \frac{5}{4}h_0$ with the bounded solutions $(u,v_1,v_2)$, which is a contradiction.
  Hence $T_{\max} =\infty$.

Finally we prove \eqref{thm1-eq1} and \eqref{thm1-eq2} hold.
   By Lemma \ref{apriori-estimate-lm2},  \eqref{thm1-eq1} holds.
   \eqref{thm1-eq2} can be proved by the similar arguments as those of \cite[Lemma 2.3]{BaoShen1}.
  The theorem is thus proved.
\end{proof}

\section{Spreading and vanishing dichotomy}
In this section, we study the spreading and vanishing scenarios for  the chemotaxis free boundary problem \eqref{one-free-boundary-eq}, and prove
Theorem \ref{free-boundary-thm2}. To do so, we first prove two lemmas.

\begin{lem}
\label{thm2-lm1}
Assume that {\bf (H1)} holds.
 Let $h_0>0$ and $u_0(\cdot)$ satisfy \eqref{initial-cond-eq}, and $(u(\cdot,\cdot;u_0,h_0)$, $v_{1}(\cdot,\cdot;u_0,h_0)$, $v_2(\cdot,\cdot;u_0,h_0)$, $h(t;u_0,h_0))$ be the classical solution of \eqref{one-free-boundary-eq} with $u(\cdot,0;u_0,h_0)=u_0(\cdot)$
 and $h(0;u_0,h_0)=h_0$. Then  we have that
\begin{align*}\label{asym-eq16}
&\|\partial_{x}(\chi_2v_2-\chi_1v_1)(\cdot,t;u_0)\|_{\infty}\nonumber \\
& \leq \min\Big\{\frac{|\chi_2\mu_2-\chi_1\mu_1|}{2\sqrt{\lambda_2}}+\frac{\chi_1\mu_1|\sqrt{\lambda_1}-\sqrt{\lambda_2}|}{2\sqrt{\lambda_1\lambda_2}}, \frac{|\chi_1\mu_1-\chi_2\mu_2|}{2\sqrt{\lambda_1}}+\frac{\chi_2\mu_2|\sqrt{\lambda_2}-\sqrt{\lambda_1}|}{2\sqrt{\lambda_1\lambda_2}}
\Big\}\|u(t,\cdot,;u_0,h_0)\|_{\infty}
\end{align*}
for every   $t\geq 0$.
\end{lem}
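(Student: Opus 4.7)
The plan is to represent $v_1$ and $v_2$ through the Neumann heat semigroup and then to exploit the nonnegativity of $u$ to pick up the factor $\tfrac12$ that appears in the stated bound. At each fixed time $t$, $v_i(t,\cdot)$ solves $-v_{i,xx}+\lambda_i v_i=\mu_i u(t,\cdot)$ on $[0,h(t)]$ with homogeneous Neumann boundary conditions, so
\[
v_i(t,x)=\mu_i\int_0^\infty e^{-\lambda_i s}\bigl(T(s)u(t,\cdot)\bigr)(x)\,ds,
\]
where $T(s)$ denotes the Neumann heat semigroup on $[0,h(t)]$. Differentiating in $x$ and subtracting yields
\[
\partial_x(\chi_2v_2-\chi_1v_1)(t,x)=\int_0^\infty\bigl[\chi_2\mu_2 e^{-\lambda_2 s}-\chi_1\mu_1 e^{-\lambda_1 s}\bigr]\,\partial_x\bigl(T(s)u(t,\cdot)\bigr)(x)\,ds.
\]

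The decisive technical step is the gradient estimate
\[
\|\partial_x T(s)w\|_\infty\le\frac{\|w\|_\infty}{2\sqrt{\pi s}}\qquad\text{for every nonnegative }w\in C([0,h(t)]).
\]
To prove it, extend $w$ by successive even reflections about $x=0$ and $x=h(t)$ to a $2h(t)$-periodic nonnegative function $\tilde w$ on $\RR$; then $T(s)w=G_s\ast\tilde w$ on $[0,h(t)]$ with $G_s(z)=(4\pi s)^{-1/2}e^{-z^2/(4s)}$. Because $G_s'(z)=-\frac{z}{2s}G_s(z)$ is positive for $z<0$ and negative for $z>0$, the convolution decomposes as $\partial_x T(s)w(x)=P+N$, where $P\ge 0$ comes from the $z<0$ portion and $N\le 0$ from the $z>0$ portion. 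The elementary computation $\int_0^\infty|G_s'(z)|\,dz=\tfrac{1}{2\sqrt{\pi s}}$ bounds each of $P$ and $|N|$ by $\|w\|_\infty/(2\sqrt{\pi s})$; since $P$ and $N$ have opposite signs, $|P+N|\le\max(P,|N|)$, whence the estimate follows. Without nonnegativity the analogous $L^1$ bound would be $\|w\|_\infty/\sqrt{\pi s}$, so the nonnegativity of $u$ is precisely what produces the factor $\tfrac12$.

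With this gradient estimate in hand, decompose the coefficient in two symmetric ways,
\[
\chi_2\mu_2 e^{-\lambda_2 s}-\chi_1\mu_1 e^{-\lambda_1 s}=(\chi_2\mu_2-\chi_1\mu_1)\,e^{-\lambda_i s}+\chi_j\mu_j\bigl(e^{-\lambda_2 s}-e^{-\lambda_1 s}\bigr),
\]
with the two choices $(i,j)=(2,1)$ and $(i,j)=(1,2)$. Taking absolute values, multiplying by $\|u(t,\cdot)\|_\infty/(2\sqrt{\pi s})$, and integrating over $s\in(0,\infty)$ using the identities
\[
\int_0^\infty\frac{e^{-\lambda s}}{\sqrt{\pi s}}\,ds=\frac{1}{\sqrt{\lambda}},\qquad \int_0^\infty\frac{|e^{-\lambda_1 s}-e^{-\lambda_2 s}|}{\sqrt{\pi s}}\,ds=\frac{|\sqrt{\lambda_1}-\sqrt{\lambda_2}|}{\sqrt{\lambda_1\lambda_2}}
\]
yields each of the two quantities inside the minimum. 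Taking the smaller then completes the proof.

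The main obstacle is the gradient estimate just above and, in particular, keeping track of the factor $\tfrac12$ coming from the sign-splitting of $G_s'$ against nonnegative reflected data; once this is secured, the remaining steps amount to routine bookkeeping of the exponential integrals.
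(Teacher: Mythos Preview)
Your proof is correct and follows essentially the same route as the paper. Both arguments pass to the whole line by even reflection so that $v_i$ is given by the heat-kernel formula, and both exploit the nonnegativity of the reflected data together with the sign change of $G_s'$ to gain the factor $\tfrac12$; the only difference is that the paper cites \cite[Lemma~4.1]{Salako2017Global} for the final gradient estimate and the exponential integrals, whereas you have written those details out in full.
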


\begin{proof}
First, define $\tilde u^+(t,x)$ for $x\ge 0$ successfully by
$$
\tilde u^+(t,x)=u(t,x;u_0,h_0)\quad {\rm for}\quad 0\le x\le h(t;u_0,h_0),
$$
$$
\tilde u^+(t,x)=\tilde u^+(t,2h(t;u_0,h_0)-x)\quad {\rm for}\quad h(t;u_0,h_0)<x\le 2h(t;u_0,h_0),
$$
and
$$
\tilde u^+(t,x)=\tilde u^+(t,2k h(t;u_0,h_0)-x)\quad {\rm for}\quad kh(t;u_0,h_0)<x\le (k+1) h(t;u_0,h_0)
$$
for $k=1,2,\cdots$. Define $\tilde u(t,x)$ for $x\in\RR$ by
$$
\tilde u(t,x)=\tilde u^+(t,|x|)\quad {\rm for}\quad x\in\RR.
$$
It is clear that
$$
\tilde u(t,x)=\tilde u(t,-x)\quad \forall\,\, x\in\RR.
$$
and
$$
\tilde u(t,2h(t;u_0,h_0)-x)=\tilde u(t,x)\quad \forall \,\, x\in\RR.
$$

Next, let $\tilde v_i(t,x)$ be the solution of
$$
0=\tilde v_{i,xx}(t,x)-\lambda_i \tilde v_i(t,x)+\mu_i\tilde u(t,x),\quad x\in\RR.
$$
Then
\begin{align*}
0&=\tilde v_{i,xx}(t,-x)-\lambda_i \tilde v_i(t,-x)+\mu_i\tilde u(t,-x)\\
&=\tilde v_{i,xx}(t,-x)-\lambda_i \tilde v_i(t,-x)+\mu_i \tilde u(t,x),\quad x\in\RR
\end{align*}
and
\begin{align*}
0&=\tilde v_{i,xx}(t,2h(t;u_0,h_0)-x)-\lambda_i \tilde v_i(t,2h(t;u_0,h_0)-x)+\mu_i \tilde u(t,2h(t;u_0,h_0)-x)\\
&=\tilde v_{i,xx}(t,2h(t;u_0,h_0)-x)-\lambda_i \tilde v_i(t,2h(t;u_0,h_0)-x)+\mu_i \tilde u(t,x),\quad x\in\RR.
\end{align*}
It then follows that
$$
\tilde v_i(t,x)=\tilde v_i(t,-x)=\tilde v_i(t,2h(t;u_0,x_0)-x)\quad \forall \,\, x\in\RR.
$$
This implies that
$$
\begin{cases}
0=\tilde v_{i,xx}-\lambda_i \tilde v_i+\mu_i u(t,x;u_0,h_0)\quad 0<x<h(t;u_0,h_0)\cr
\tilde v_{i,x}(t,0)=\tilde v_{i,x}(t,h(t;u_0,h_0))=0.
\end{cases}
$$
Therefore,
$$
v_i(t,x;u_0,h_0)=\tilde v_{i}(t,x)\quad \forall\,\, 0\le x\le h(t;u_0,h_0)
$$
and
\begin{equation}
\label{thm2-eq1}
v_i(t,x;u_0,h_0)=\mu_i\frac{1}{2\sqrt \pi}\int_{0}^{\infty}\int_{-\infty}^\infty \frac{e^{-\lambda_0 s}}{\sqrt s}e^{-\frac{|x-z|^{2}}{4s}}\tilde u(t,z)dzds
\end{equation}
for $0\le x\le h(t;u_0,h_0)$ and $i=1,2$.
The lemma then follows from  \cite[Lemma 4.1]{Salako2017Global}.
\end{proof}

\begin{lem}
\label{thm2-lm2}
Assume that {\bf (H1)} holds.
 Let $h_0>0$ and $u_0(\cdot)$ satisfy \eqref{initial-cond-eq}, and $(u(\cdot,\cdot;u_0,h_0)$, $v_{1}(\cdot,\cdot;u_0,h_0)$, $v_2(\cdot,\cdot;u_0,h_0)$, $h(t;u_0,h_0))$ be the classical solution of \eqref{one-free-boundary-eq} with $u(\cdot,0;u_0,h_0)=u_0(\cdot)$
 and $h(0;u_0,h_0)=h_0$.
For every $R\gg 1$, there are $C_R\gg 1$ and $\varepsilon_R>0$ such that for any $t>0$ with $h(t;u_0,h_0)>R$,  we have
 \begin{align}\label{nnew-eq3}
&|\chi_i v_{i,x}(t,\cdot;u_0,h_0)|_{C([0,\frac{R}{2}])} + |\chi_i\lambda_i v_i(t,\cdot;u_0,h_0)|_{C([0,\frac{R}{2}])}\nonumber\\
&\leq C_{R}\|u(t,\cdot;u_0,h_0)\|_{C([0,R])}+\varepsilon_R \max\{\|u_0\|_{C([0,h_0])},M_0\},\quad i=1,2
\end{align}
with $\lim_{R\to\infty}\varepsilon_R=0$,  where $M_0$ is as in \eqref{M0-eq}.
\end{lem}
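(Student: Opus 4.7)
The plan is to exploit the whole-line integral representation for $v_i$ established in the proof of Lemma \ref{thm2-lm1}. Recall that there one shows $v_i(t,x;u_0,h_0)=\tilde v_i(t,x)$ for $0\le x\le h(t;u_0,h_0)$, where $\tilde v_i$ is the bounded solution on $\R$ of $\tilde v_{i,xx}-\lambda_i\tilde v_i+\mu_i\tilde u=0$ and $\tilde u(t,\cdot)$ is the even, reflected extension of $u(t,\cdot;u_0,h_0)$ to all of $\R$. Evaluating the inner $s$-integral in \eqref{thm2-eq1} via the Laplace-transform identity $\int_0^\infty s^{-1/2}e^{-\lambda s-r^2/(4s)}\,ds=\sqrt{\pi/\lambda}\,e^{-\sqrt{\lambda}\,r}$ recasts this as the Green's function form
\begin{equation*}
v_i(t,x;u_0,h_0)=\frac{\mu_i}{2\sqrt{\lambda_i}}\int_{\R}e^{-\sqrt{\lambda_i}|x-y|}\tilde u(t,y)\,dy,\qquad v_{i,x}(t,x;u_0,h_0)=-\frac{\mu_i}{2}\int_{\R}\mathrm{sgn}(x-y)\,e^{-\sqrt{\lambda_i}|x-y|}\tilde u(t,y)\,dy,
\end{equation*}
valid for $0\le x\le h(t;u_0,h_0)$.

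Fix $x\in[0,R/2]$ and split each integral as $\int_{|y|\le R}+\int_{|y|>R}$. In the near region $|y|\le R\le h(t;u_0,h_0)$, the reflection construction of $\tilde u$ in Lemma \ref{thm2-lm1} gives $\tilde u(t,y)=u(t,|y|;u_0,h_0)\le\|u(t,\cdot;u_0,h_0)\|_{C([0,R])}$; bounding the kernels in absolute value by their integrals over all of $\R$ yields
\begin{equation*}
\Big|\frac{\mu_i}{2\sqrt{\lambda_i}}\int_{|y|\le R}e^{-\sqrt{\lambda_i}|x-y|}\tilde u(t,y)\,dy\Big|\le\frac{\mu_i}{\lambda_i}\|u(t,\cdot;u_0,h_0)\|_{C([0,R])},
\end{equation*}
and analogously $\big|\tfrac{\mu_i}{2}\int_{|y|\le R}\mathrm{sgn}(x-y)e^{-\sqrt{\lambda_i}|x-y|}\tilde u(t,y)\,dy\big|\le\tfrac{\mu_i}{\sqrt{\lambda_i}}\|u(t,\cdot;u_0,h_0)\|_{C([0,R])}$. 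Multiplying by $\chi_i\lambda_i$ and $\chi_i$ respectively and summing over $i=1,2$ produces the $C_R\|u(t,\cdot;u_0,h_0)\|_{C([0,R])}$ contribution of \eqref{nnew-eq3}, with $C_R$ in fact independent of $R$ and $t$.

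For the far region $|y|>R$ with $|x|\le R/2$, one has $|x-y|\ge R/2$, and Theorem \ref{free-boundary-thm1} provides the a priori bound $\tilde u(t,y)\le\max\{\|u_0\|_\infty,M_0\}$. Since $\int_{|x-y|\ge R/2}e^{-\sqrt{\lambda_i}|x-y|}\,dy\le(2/\sqrt{\lambda_i})\,e^{-\sqrt{\lambda_i}R/2}$, the far contribution is at most $\varepsilon_R\max\{\|u_0\|_\infty,M_0\}$ with $\varepsilon_R:=\sum_{i=1}^2\big(\chi_i\mu_i+\chi_i\mu_i/\sqrt{\lambda_i}\big)e^{-\sqrt{\lambda_i}R/2}$, which tends to $0$ as $R\to\infty$. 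Combining the near and far estimates yields \eqref{nnew-eq3}. The main obstacle is purely technical: one must extract the $v_{i,x}$ estimate alongside the $v_i$ estimate, and the cleanest way is via the Green's function form above (or, equivalently, a change of variables $y-x=2\sqrt{s}\,w$ directly in \eqref{thm2-eq1}); attempting to localize with a Neumann Green's function on $[0,h(t)]$ would introduce boundary-reflection terms whose dependence on $h(t)$ is awkward to control uniformly, whereas the reflected $\tilde u$ delivers a whole-line kernel with uniform-in-$t$ exponential decay rate $\sqrt{\lambda_i}$.
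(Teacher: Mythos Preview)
Your argument is correct and is essentially the same approach as the paper's: the paper invokes the whole-line representation \eqref{thm2-eq1} for $v_i$ via the reflected extension $\tilde u$ and then cites \cite[Lemma 2.5]{SaShXu} for the near/far splitting estimate, while you spell that lemma out explicitly by passing to the Green's function form and using the exponential decay of the kernel. The only cosmetic point is that your $\varepsilon_R$ is written as a sum over $i=1,2$, whereas the statement asks for the bound for each fixed $i$; simply dropping the sum gives the stated form.
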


\begin{proof}
It follows from \eqref{thm2-eq1} and  \cite[Lemma 2.5]{SaShXu}.
\end{proof}

 We now prove Theorem \ref{free-boundary-thm2}.  Without loss of generality, we also prove the case that $t_0=0$,
and we denote by $(u(t,x;u_0,h_0),v_1(t,x;u_0,v_0),v_2(t,x;u_0,h_0),h(t;u_0,h_0))$ the solution of \eqref{one-free-boundary-eq}
with $u(0,x;u_0,h_0)=u_0(x)$ and $h(0;u_0,h_0)=h_0$.

\begin{proof}[Proof of Theorem \ref{free-boundary-thm2}]

  Note that we have either $h_\infty<\infty$ or $h_\infty=\infty$. It then suffices to prove that  $h_\infty<\infty$ and $h_\infty=\infty$
  imply (i) and (ii), respectively.

 First,  suppose that $h_\infty <\infty$. We prove (i) holds.

 To this end, we first claim that $h'(t;u_0,h_0)\rightarrow 0$ as $t\rightarrow\infty$. Assume that the claim is not true. Then there is $t_n\rightarrow\infty$  $(t_n\geq 2)$ such that $\lim_{n\to\infty} h'(t_n;u_0,h_0) >0$. Let $h_n(t) = h(t+t_n;u_0,h_0)$ for $t \geq -1$.
 By the arguments of Theorem \ref{free-boundary-thm1},
 $\{h'_n(t)\}$ is uniformly bounded on $[-1,\infty)$, and then by the arguments of Lemma \ref{local-existence-lm}, $\{h'_n(t)\}$ is equicontinuous on $[-1,\infty)$.
  We may then assume that there is a continuous function $L^*(t)$ such that $h'_n(t)\to L^*(t)$ as $n\to\infty$ uniformly in $t$ in bounded sets of $[-1,\infty).$ It then follows that $L^*(t) =\frac{dh_\infty}{dt} \equiv 0$ and then $\lim_{n\to\infty} h'(t_n;u_0,h_0) =0$, which is a contradiction. Hence the claim holds.

Second, we show that if $h_\infty<\infty$, then  $\lim_{t\to\infty} \|u(t,\cdot;u_0,h_0)\|_{C([0,h(t)])} =0.$
Assume that this is not true. Then by a priori estimates for parabolic equations,  there are $t_n\to \infty$ and $u^*(t,x) \neq 0$ such that $\|u(t+t_n,\cdot;u_0,h_0) - u^*(t,\cdot)\|_{C([0,h(t+t_n)]} \to 0$ as $t_n\to \infty$. Without loss of generality, we may assume that the limits $\lim_{n\to\infty} a(t+t_n,x)$ and $\lim_{n\to\infty} b(t+t_n,x)$
 exist locally uniformly in $(t,x)$. We then  have $u^*(t,\cdot)$ is an entire solution of
\begin{eqnarray*}
\begin{cases}
u_t = u_{xx} -\chi_1  (u  v_{1,x})_x +\chi_2(u v_{2,x})_x+ u(a^*(t,x) - b^*(t,x)u),& \mbox{in}\quad (0,h_\infty)
\\
 0 = v_{1,xx} - \lambda_1v_1 + \mu_1u, & \mbox{in}\quad (0,h_\infty)\\
 0=v_{2,xx}-\lambda_2 v_2+\mu_2 u,  & \mbox{in}\quad (0,h_\infty)
\\
u_x(t,0)=v_{1,x}(t,0)=v_{2,x}(t,0)=0
\\
 u(t,h_\infty)=v_{1,x}(t,h_\infty)=v_{2,x}(t,h_\infty)= 0,
\end{cases}
\end{eqnarray*}
where $a^*(t,x)=\lim_{n\to\infty}a (t+t_n,x)$ and $b^*(t,x)=\lim_{n\to\infty} b(t+t_n,x)$.
By the Hopf Lemma for parabolic equations, we have $u^*_x(t,h_\infty) < 0$. This implies that
\begin{equation*}
 \lim_{n\to\infty} h'(t_n) = - \lim_{n\to\infty} \nu u_x(t_n,h(t_n);u_0,h_0) =-\nu u^*_x(0,h_\infty)>0,
\end{equation*}
which is a contradiction again. Hence $\lim_{t\to\infty} \|u(t,\cdot;u_0,h_0)\|_{C([0,h(t)])} =0.$

 Third, we show  that $h_\infty <\infty$ implies $\lambda(a,h_\infty)\le 0$, which is equivalent to $h_\infty \leq l^*$.
 Assume that $h_\infty \in (l^*,\infty).$ Then, for any $\epsilon>0$,
 there exists $\widetilde{T} >0$ such that $h(t;u_0,h_0) > h_\infty -\epsilon > l^*$ and $\|u(t,\cdot;u_0,h_0)\|_{C([0,h(t;u_0,h_0)])}<\epsilon$
  for all $t \geq \widetilde{T}$.
 Note that $u(t,x;u_0,h_0)$ satisfies
 \begin{equation}
 \label{aux-spreading-eq1}
 \begin{cases}
 u_t = u_{xx} -\chi_1  u_x  v_{1,x}+\chi_2 u_x v_{2,x}\\
 \qquad +  u(a(t,x)-\chi_1\lambda_1 v_1+\chi_2\lambda_2 v_2 -( b(t,x)-\chi_1\mu_1+\chi_1\mu_2)u),& \mbox{in}\quad (0,h_\infty-\epsilon)
\\
u_x(t,0)=0,\,\, u(t,h_\infty-\epsilon)> 0
 \end{cases}
 \end{equation}
 for $t\ge \tilde T$,
 where $v_1(t,x)=v_1(t,x;u_0,h_0)$ and $v_2(t,x)=v_2(t,x;u_0,h_0)$.

Consider
 \begin{eqnarray}\label{Cutoff-Eq}
\begin{cases}
w_t = w_{xx} +\beta(t,x)  w_x +  w(\tilde a(t,x)-\tilde b(t,x)w),& \mbox{in}\quad (0,h_\infty - \epsilon)
\\
w_x(t,0) =0,\,\, w(t, h_\infty -\epsilon)= 0
 \\
 w(\tilde T,x)=\tilde u_0(x),
\end{cases}
\end{eqnarray}
where
$$
\begin{cases}
\beta(t,x)=-\chi_1 v_{1,x}(t,x;u_0,h_0)+\chi_2 v_{2,x}(t,x;u_0,h_0)\cr
\tilde a(t,x)=a(t,x)-\chi_1 \lambda_1 v_1(t,x;u_0,h_0)+\chi_2 \lambda_2 v_2(t,x;u_0,h_0)\cr
\tilde b(t,x)=b(t,x)-\chi_1\mu_1 v_1(t,x;u_0,h_0)+\chi_2\mu_2 v_2(t,x;u_0,h_0),
\end{cases}
$$
and
$$
\tilde u_0(x)=u(\tilde T,x;u_0,h_0).
$$
By comparison principle for the parabolic equations, we have
\begin{equation}
\label{aux-spreading-eq2}
 u(t + \widetilde{T},\cdot;u_0,h_0) \geq w (t + \widetilde{T},\cdot;\tilde T,\tilde u_0)\quad\mbox{for}\quad t\geq 0,
\end{equation}
where $w (t + \widetilde{T},\cdot;\tilde T,\tilde u_0)$ is the solution of  \eqref{Cutoff-Eq} with $w(\widetilde{T},\cdot;\tilde T,\tilde u_0) = \tilde u_0(\cdot)$.

 By Lemma \ref{apriori-estimate-lm1},
 $$
 |\chi_1 \lambda_1 v_1(t,x;u_0,h_0)-\chi_2\lambda_2 v_2(t,x;u_0,h_0)|\le M\epsilon\quad \forall\, t\ge \tilde T,\, \, x\in [0,h(t;u_0,h_0)].
 $$
 Let
 $$
 M_{\chi_1,\chi_2}=\min\Big\{\frac{|\chi_2\mu_2-\chi_1\mu_1|}{2\sqrt{\lambda_2}}+\frac{\chi_1\mu_1|\sqrt{\lambda_1}-\sqrt{\lambda_2}|}{2\sqrt{\lambda_1\lambda_2}}, \frac{|\chi_1\mu_1-\chi_2\mu_2|}{2\sqrt{\lambda_1}}+\frac{\chi_2\mu_2|\sqrt{\lambda_2}-\sqrt{\lambda_1}|}{2\sqrt{\lambda_1\lambda_2}}
\Big\}.
$$
By Lemma \ref{thm2-lm1},
$$
|\chi_1 v_{1,x}(t,x;u_0,h_0)-\chi_2 v_{2,x} (t,x;u_0,h_0)|\le M_{\chi_1,\chi_2} \epsilon \quad \forall\, t\ge \tilde T,\, \, x\in [0,h(t;u_0,h_0)].
 $$
Then by Lemma \ref{fisher-kpp-lm1}, for $0<\epsilon\ll 1$,
$$
\liminf_{t\to\infty} \|w(t,\cdot;\tilde T,\tilde u_0)\|_{C([0,h_\infty-\epsilon])}>0.
$$
This together with \eqref{aux-spreading-eq2} implies that
$$
\liminf_{t\to\infty}\|u(t,\cdot;u_0,h_0)\|_{C([0,h(t;u_0,h_0)])}>0,
$$
which is a contradiction. Therefore, $h_\infty\le l^*$.
 (i) is thus proved.

\medskip

Next, suppose that that $h_\infty =\infty$. We prove that (ii) holds.

To this end, fix $m\gg  l^*$ with
$$2\varepsilon_{2m} \max\{\|u_0\|_{C([0,h_0])},M_0\}<\frac{a_{\inf}}{3},
$$
where $\varepsilon_{2m}$ is as in Lemma \ref{thm2-lm2} with $R=2m$.
 Note that there is $T_0>0$ such that $h(t;u_0,h_0)>2m$ for $t\ge T_0$.
We first prove

\smallskip

\noindent {\bf Claim 1.}  {\it For any $\epsilon>0$, there is $\delta_\epsilon>0$ such that for any $t\ge T_0$,
 if $\sup_{0\le x\le 2m}u(t,x;u_0,h_0)\ge \epsilon$, then $\inf_{0\le x\le m}u(t,x;u_0,h_0)\ge \delta_\epsilon$.
 }

 \smallskip

 In fact, if the claim does not hold, there is $\epsilon_0>0$ and $T_0\le t_n\to \infty$ such that
 $$\sup_{0\le x\le 2m}u(t_n,x;u_0,h_0)\ge \epsilon_0, \quad \inf_{0\le x\le m}u(t_n,x;u_0,h_0)\le \frac{1}{n}.
 $$
 Without loss of generality, we may assume that
 $u(t+t_n,x;u_0,h_0)\to u^*(t,x)$,  $v_i(t+t_n,x;u_0,h_0)\to v^*_i(t,x)$ ($i=1,2$),
 $a(t+t_n,x)\to a^*(t,x)$, and $b(t+t_n,x)\to b^*(t,x)$ as $n\to\infty$.
 Then $u^*(t,x)$ satisfies
 \begin{equation*}
 \begin{cases}
 u_t=u_{xx}-\big((\chi_1 v_1^*(t,x)-\chi_2 v_2^*(t,x) u\big)_x+u(a^*(t,x)-b^*(t,x)u),\quad 0<x<2m\cr
 u_x(t,0)=0,\,\, u(t,2m)>0
 \end{cases}
\end{equation*}
for all $t\in\RR$. Note that $u^*(t,x)\ge 0$ and  $\sup_{0\le x\le 2m}u^*(0,x)\ge \epsilon_0$. Then by comparison principle for
parabolic equations, $u^*(t,x)>0$ for all $t\in\RR$ and $0<x<2m$. In particular,
$\inf_{0\le x\le m}u^*(0,x)>0$, which is a contradiction. Hence
the claim 1 holds.

\smallskip

Next, we prove

\smallskip

\noindent {\bf Claim 2.} {\it  For any $0<\epsilon\ll 1$, there is $\tilde \epsilon>0$ such that
if $\sup_{0\le x\le 2m} u(t,x;u_0,h_0)\le \epsilon$ for $T_0\le  t_1\le t\le t_2$ and
$\sup_{0\le x\le 2m} u(t_1,x;u_0,h_0)=\epsilon$ ($i=1,2$), then
$$
\sup_{0\le x\le 2m} u(t,x;u_0,h_0)\ge \tilde \epsilon\quad \forall\,\, t_1\le t\le t_2.
$$
}

\smallskip

In fact, for given $0<\epsilon \ll 1$, assume $\sup_{0\le x\le 2m} u(t,x;u_0,h_0)\le \epsilon$ for $T_0\le t_1\le t\le t_2$
and $\sup_{0\le x\le 2m} u(t_1,x;u_0,h_0)=\epsilon$. Note that $u(t,x;u_0,h_0)$ satisfies
\begin{equation}
\label{aux-thm2-eq1}
\begin{cases}
u_t=u_{xx}+\tilde \beta(t,x)u_x+u(\tilde a(t,x)-\tilde b(t,x)u),\quad 0<x<m\cr
u_x(t,0)=0,\,\,  u(t,m)>0
\end{cases}
\end{equation}
for $t_1<t<t_2$,
where
$$
\begin{cases}
\tilde \beta(t,x)=-\chi_1 v_{1,x}(t,x;u_0,v_0)+\chi_2 v_{2,x}(t,x;u_0,v_0)\cr
\tilde a(t,x)=a(t,x)-\chi_1\lambda_1 v_1(t,x;u_0,h_0)+
\chi_2\lambda_2 v_2(t,x;u_0,h_0)\cr
\tilde b(t,x)=b(t,x)-\chi_1\mu_1+\chi_2\mu_2.
\end{cases}
$$

Consider
\begin{equation}
\label{aux-thm2-eq2}
\begin{cases}
w_t=w_{xx}+ \beta_0 w_x+w(\frac{a_{\inf}}{2}-(b_{\sup}-\chi_1\mu_1+\chi_2\mu_2)w),\quad 0<x<m\cr
w_x(t,0)=w(t,m)=0\cr
w(t_1,x)=\tilde u_0(x),\quad 0\le x\le m,
\end{cases}
\end{equation}
where
$$
\beta_0=\sup_{t_1\le t\le t_2,0\le x\le m} \tilde \beta(t,x)
$$
and
$\tilde u_0(x)=\delta_\epsilon \cos(\frac{\pi x}{2m})$. By {\bf Claim 1},
$$
\tilde u_0(x)\le u(t_1,x;u_0,h_0),\quad 0\le x\le m.
$$
By Lemma \ref{thm2-lm1},
$$
0\le  \beta_0\ll 1,\quad \tilde a(t,x)\ge \frac{a_{\inf}}{2}\quad \forall\,\,  0<\epsilon\ll 1.
$$
Then by Lemma \ref{fisher-kpp-lm1}, there is $\tilde \epsilon>0$ such that
$$
\|w(t,\cdot;t_1,\tilde u_0)\|_{C([0,m])}>\tilde \epsilon\quad \forall\,\, t>t_1 ,
$$
where $w(t,\cdot;t_1,\tilde u_0)$ is the solution  of \eqref{aux-thm2-eq2}.
Note that $\tilde u_0'(x)\leq 0 $, by comparison principle for parabolic equations,
$$
w_x(t,x;t_1,\tilde u_0)\le 0\quad \forall\,\, t>t_1,\, 0\le x\le m
$$
and then
$$
\tilde \beta(t,x) w_x(t,x;t_1,\tilde u_0)\ge \beta_0 w_x(t,x;t_1,\tilde u_0)\quad \forall\,\, t_1\le t\le t_2,\,\, 0\le x\le m.
$$
By comparison principle for parabolic equations again,
$$
u(t,x;u_0,h_0)\ge w(t,x;t_1,\tilde u_0)\quad \forall\,\, t_1\le t\le t_2,\,\, 0\le x\le m.
$$
This implies that the claim holds.

\smallskip
We now prove

\smallskip

\noindent {\bf Claim 3.} {\it
$$
\liminf_{t\to\infty} \inf_{0\le x\le m}u(t,x;u_0,h_0)>0.
$$}

\smallskip

Choose $0<\epsilon\ll \|u(T_0,\cdot;u_0,h_0)\|_{C([0,h(T_0;u_0,h_0)])}$. Let
$$
I(\epsilon)=\{t>T_0\,|\, \|u(t,\cdot;u_0,h_0)\|_{C([0,h(t;u_0,h_0)]}<\epsilon\}.
$$
Then $I(\epsilon)$ is an open subset of $(T_0,\infty)$ and $I(\epsilon)\not = (T_0,\infty)$.
Therefore, if $I(\epsilon)\not =\emptyset$, there are $T_0<s_n<t_n$ such that
$$
I(\epsilon)=\cup_{n} (s_n,t_n).
$$
By Claim 1, for any $t\in [T_0,\infty)\setminus I(\epsilon)$,
$$
\inf_{0\le x\le m} u(t,x;u_0,h_0)\ge \delta_\epsilon.
$$
For any $t\in I(\epsilon)$, there is $n$ such that $t\in (s_n,t_n)$. Note that
$\|u(s_n,\cdot;u_0,h_0)\|_{C([0,h(s_0;u_0,h_0)])}=\epsilon$. Then by Claims 1 and 2,
$$
\inf_{0\le x\le m} u(t,x;u_0,h_0)\ge \delta_{\tilde \epsilon}.
$$
It then follows that
$$
\liminf_{t\to\infty}\inf_{0\le x\le m} u(t,x;u_0,h_0)>0
$$
and the claim is proved. (ii) thus follows.
\end{proof}

\section{Persistence and convergence}

In this section, we study the local uniform persistence and convergence of nonnegative solutions of \eqref{one-free-boundary-eq}
in the case that spreading occurs, and prove Theorem \ref{free-boundary-thm3}. We first prove a lemma.

\begin{lem}
\label{thm3-lm1}
Assume that {\bf (H1)} holds.
 Let $h_0>0$ and $u_0(\cdot)$ satisfy \eqref{initial-cond-eq}, and $(u(\cdot,\cdot;u_0,h_0)$, $v_{1}(\cdot,\cdot;u_0,h_0)$, $v_2(\cdot,\cdot;u_0,h_0)$, $h(t;u_0,h_0))$ be the classical solution of \eqref{one-free-boundary-eq} with $u(0,\cdot;u_0,h_0)=u_0(\cdot)$
 and $h(0;u_0,h_0)=h_0$.
For every $R\gg 1$, there are $C_R\gg 1$ and $\varepsilon_R>0$ such that for any $t>0$ with $h(t;u_0,h_0)>2R$ and any $x_0\in (R,h(t;u_0,h_0)-R)$,  we have
 \begin{align}\label{aux-thm3-eq1}
&|\chi_i v_{i,x}(t,\cdot;u_0,h_0)|_{C([x_0-\frac{R}{2},x_0+\frac{R}{2}])} + |\chi_i\lambda_i v_i(t,\cdot;u_0,h_0)|_{C([x_0-\frac{R}{2}.x_0+\frac{R}{2}])}\nonumber\\
&\leq C_{R}\|u(t,\cdot;u_0,h_0)\|_{C([x_0-R,x_0+R])}+\varepsilon_R \max\{\|u_0\|_{C([0,h_0])},M_0\},\quad i=1,2
\end{align}
with $\lim_{R\to\infty}\varepsilon_R=0$,  where $M_0$ is as in \eqref{M0-eq}.
\end{lem}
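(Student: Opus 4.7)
The plan is to imitate the proof of Lemma \ref{thm2-lm2}, only with the reference point shifted from $0$ to an arbitrary interior point $x_0$. First I would invoke the reflection/extension construction already used in Lemma \ref{thm2-lm1}: extending $u(t,\cdot;u_0,h_0)$ from $[0,h(t;u_0,h_0)]$ to all of $\RR$ as an even function about $0$ and $h(t;u_0,h_0)$-reflected function $\tilde u(t,\cdot)$, so that the representation
\begin{equation*}
v_i(t,x;u_0,h_0)=\frac{\mu_i}{2\sqrt{\pi}}\int_0^\infty\!\int_{-\infty}^\infty\frac{e^{-\lambda_i s}}{\sqrt{s}}\,e^{-\frac{|x-z|^2}{4s}}\,\tilde u(t,z)\,dz\,ds
\end{equation*}
holds for every $0\le x\le h(t;u_0,h_0)$, along with the analogous formula for $v_{i,x}$ obtained by differentiating under the integral. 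By Theorem \ref{free-boundary-thm1} the extension satisfies $0\le \tilde u(t,z)\le \max\{\|u_0\|_\infty,M_0\}$ for all $z\in\RR$.

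Next, for $x\in[x_0-R/2,x_0+R/2]$ I would split the inner integral at $|z-x_0|=R$. Since $h(t;u_0,h_0)>2R$ and $x_0\in(R,h(t;u_0,h_0)-R)$, the window $[x_0-R,x_0+R]$ lies inside $[0,h(t;u_0,h_0)]$, so $\tilde u(t,z)=u(t,z;u_0,h_0)$ there. On this window the kernel contributes at most a finite constant $C_R$ (depending on $R$ and $\lambda_i$) times $\|u(t,\cdot;u_0,h_0)\|_{C([x_0-R,x_0+R])}$, while outside the window $|x-z|\ge R/2$, so Gaussian decay together with the $s$-integral of $e^{-\lambda_i s}/\sqrt{s}$ and the uniform bound on $\tilde u$ produce a contribution bounded by $\varepsilon_R\max\{\|u_0\|_\infty,M_0\}$ with $\varepsilon_R\to 0$ as $R\to\infty$. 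The same splitting, applied to the derivative kernel $\partial_x\bigl(e^{-|x-z|^2/(4s)}\bigr)=-\tfrac{x-z}{2s}e^{-|x-z|^2/(4s)}$, yields the analogous bound for $\chi_i v_{i,x}$, the extra factor $(x-z)/s$ being harmless since we can still integrate the product $\frac{|x-z|}{s^{3/2}}e^{-\lambda_i s-|x-z|^2/(4s)}$.

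Adding the two pieces and multiplying by $\chi_i$ or $\chi_i\lambda_i$ gives \eqref{aux-thm3-eq1}. The step that requires a little care is the tail estimate: one has to check uniformly in $x\in[x_0-R/2,x_0+R/2]$ that
\begin{equation*}
\int_0^\infty\!\int_{|z-x_0|>R}\frac{e^{-\lambda_i s}}{\sqrt{s}}\,e^{-|x-z|^2/(4s)}\,dz\,ds\longrightarrow 0\quad\text{as }R\to\infty,
\end{equation*}
and the analogous statement for the $\partial_x$-kernel; both follow by the change of variable $z-x=\sqrt{s}\,\eta$ and dominated convergence, exactly as in \cite[Lemma 2.5]{SaShXu}. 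Since this is the translation-invariant part of the argument already used for the boundary case in Lemma \ref{thm2-lm2}, no genuinely new obstacle appears; the only new point is that the interior location of $x_0$ lets us localize the ``large'' contribution to $[x_0-R,x_0+R]$ rather than to $[0,R]$.
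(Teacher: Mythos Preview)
Your proposal is correct and follows essentially the same approach as the paper: the paper's proof simply says that the lemma follows from the integral representation \eqref{thm2-eq1} (obtained via the reflection/extension of Lemma \ref{thm2-lm1}) together with \cite[Lemma 2.5]{SaShXu}, which is exactly the near/far splitting of the Gaussian kernel that you spell out. Your write-up just makes explicit the translation of the argument from the boundary point $0$ to an interior point $x_0$, which is precisely the ``similar to Lemma \ref{thm2-lm2}'' step the paper leaves to the reader.
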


\begin{proof}
Similar to Lemma \ref{thm2-lm2}, it  follows from \eqref{thm2-eq1} and  \cite[Lemma 2.5]{SaShXu}.
\end{proof}

\begin{proof}[Proof of Theorem \ref{free-boundary-thm3}]

(i) We prove the case that $t_0=0$. For given $h_0>0$ and $u_0(\cdot)$ satisfying \eqref{initial-cond-eq},
let $(u(t,x;u_0,h_0),v_1(t,x;u_0,h_0),v_2(t,x;u_0,h_0),h(t;u_0,h_0))$ be the solution
of \eqref{one-free-boundary-eq} with $u(0,x;u_0,h_0)=u_0(x)$ and $h(0;u_0,h_0)=h_0$.
Assume that $h_\infty:=\lim_{t\to\infty} h(t;u_0,h_0)=\infty$.

Fix $0<\epsilon \ll 1$, choose $R\gg 1$ such that
$$
2\varepsilon_R \max\{\|u_0\|_{C([0,h_0])},M_0\}<\frac{a_{\inf}}{3}.
$$
For any $x_0\in (R,\infty)$, there is $T_0>0$ such that
$$
h(T_0;u_0,h_0)>x_0+R.
$$
We first prove

\smallskip

\noindent{\bf Claim 1.} {\it For any $0<\epsilon\ll 1$, there is $\delta_\epsilon>0$ such that for any $t\ge T_0$, if $\sup_{x_0-R\le x\le x_0+R} u(t,x$; $u_0,h_0)=\epsilon$,
then $\inf_{x_0-R\le x\le x_0+R} u(t,x;u_0,h_0)\ge \delta_\epsilon$.
}

\smallskip

It can be proved by the similar arguments as those in the claim 1 of Theorem \ref{free-boundary-thm2}.

\smallskip

Next, we prove

\smallskip

\noindent {\bf Claim 2.} {\it  $\limsup_{t\to\infty} \sup_{x_0-R\le x\le x_0+R}u(t,x;u_0,h_0)\ge \epsilon$.}

\smallskip

In fact, assume that $\limsup_{t\to\infty} \sup_{x_0-R\le x\le x_0+R} u(t,x;u_0,h_0)< \epsilon$. Then there is $T_1>T_0$ such that
$$
u(t,x;u_0,h_0)<\epsilon \quad \forall\,\, t\ge T_1,\,\, x_0-R\le x\le x_0+R.
$$
Note that $u(t,x;u_0,h_0)$ satisfies
\begin{equation}
\label{aux-thm3-eq2}
\begin{cases}
u_t=u_{xx}+ \beta(t,x)u_x+u(\tilde a(t,x)-\tilde b(t,x)u),\quad x_0-\frac{R}{2}<x<x_0+\frac{R}{2}\cr
u(t,x_0-\frac{R}{2})>0,\,\,  u(t,x_0+\frac{R}{2})>0
\end{cases}
\end{equation}
for $t>T_1$,
where
$$
\begin{cases}
 \beta(t,x)=-\chi_1 v_{1,x}(t,x;u_0,v_0)+\chi_2 v_{2,x}(t,x;u_0,v_0)\cr
\tilde a(t,x)=a(t,x)-\chi_1\lambda_1 v_1(t,x;u_0,h_0)+
\chi_1\lambda_2 v_2(t,x;u_0,h_0)\cr
\tilde b(t,x)=b(t,x)-\chi_1\mu_1+\chi_2\mu_2.
\end{cases}
$$

Consider
\begin{equation}
\label{aux-thm3-eq3}
\begin{cases}
w_t=w_{xx}+ \beta(t,x) w_x+w(\frac{a_{\inf}}{2}-(b_{\sup}-\chi_1\mu_1+\chi_2\mu_2)w),\quad x_0-\frac{R}{2}<x<x_0+\frac{R}{2}\cr
w(t,x_0-\frac{R}{2})=w(t,x_0+\frac{R}{2})=0\cr
w(T_1,x)=\tilde u_0(x),\quad x_0-\frac{R}{2}\le x\le x_0+\frac{R}{2},
\end{cases}
\end{equation}
where
$\tilde u_0(x)=\inf_{x_0-R\leq x\leq x_0+R}u(T_1,x;u_0,h_0)\cdot  \cos (\frac{\pi(x-x_0)}{R})$. Note that
$$
\tilde u_0(x)\le u(T_1,x;u_0,h_0),\quad x_0-\frac{R}{2}\le x \le x_0+\frac{R}{2}.
$$
By Lemma \ref{thm3-lm1}, when $0<\epsilon\ll 1$,
$$
|\beta(t,x)|\ll 1\quad \forall\,\, t\ge T_1,\,\, x_0-\frac{R}{2}\le x\le x_0+\frac{R}{2}.
$$
Then by Lemma \ref{fisher-kpp-lm2},
$$
\liminf_{t\to\infty} \|w(t,x;T_1,\tilde u_0)\|_{C([x_0-\frac{R}{2},x_0+\frac{R}{2}])}\ge \epsilon,
$$
where $w(t,x;T_0,\tilde u_0)$ is the solution of \eqref{aux-thm3-eq3}. By comparison principle for parabolic equations,
$$
u(t,x;u_0,h_0)\ge w(t,x;T_1,\tilde u_0)\quad \forall\,\, t\ge T_1,\,\, x_0-\frac{R}{2}\le x\le x_0+\frac{R}{2}
$$
provided $0<\epsilon\ll 1$. It then follows that
$$
\liminf_{t\to\infty} \sup_{x_0-R\le x\le x_0+R}u(t,x;u_0,h_0)\ge \epsilon,
$$
which is a contradiction. Hence the claims holds.

\smallskip

We then prove

\smallskip

\noindent {\bf Claim 3.} {\it For $0<\epsilon\ll 1$, there is $\tilde \epsilon>0$ such that
if $\sup_{x_0-R\le x\le x_0+R}u(t,x;u_0,h_0)\le \epsilon$ for $T_0\le t_1\le t\le t_2$ and
$\sup_{x_0-R\le x\le x_0+R} u(t_1,x;u_0,h_0)=\epsilon$, then
$$\liminf_{t\to\infty} \sup_{x_0-R\le x\le x_0+R} u(t,x;u_0,h_0)\ge \tilde \epsilon.
$$}

\smallskip

By the arguments of Claim 2,
$$
u(t,x;u_0,h_0)\ge w(t,x;t_1,\tilde u_0)\quad \forall\, \, t_1\le t\le t_2,\,\, x_0-\frac{R}{2}\le x\le x_0+\frac{R}{2},
$$
where $\tilde u_0(x)=\delta_\epsilon \cos(\frac{\pi(x-x_0)}{R})$.
It then follows that there is $\tilde \epsilon>0$ such that the claim holds.

\smallskip

Now we prove

\smallskip

\noindent{\bf Claim 4.} {\it There is $\tilde m_0>0$ such that for any $L>0$,
$\liminf_{t\to\infty} \inf_{0\le x\le L} u(t,x;u_0,h_0)\ge \tilde m_0$.}

\smallskip

In fact, choose $0<\epsilon\ll 1$. For any $x_0>R$, by Claim 2, there is $T_1>T_0>0$ such that
$h(T_0;u_0,h_0)>x_0+R$ and
$$
\sup_{x_0-R\le x\le x_0+R}u(T_1,x;u_0,h_0)=\epsilon.
$$
Let
$$
I(\epsilon)=\{t>T_1\,|\, \sup_{x_0-R\le x\le x_0+R}u(T_1,x;u_0,h_0)<\epsilon\}.
$$
Then $I(\epsilon)$ is an open subset of $(T_0,\infty)$. By Claims 1 and 3,
$$
\inf_{x_0-\frac{R}{2}\le x\le x_0+\frac{R}{2}}u(t,x;u_0,h_0)\ge \min\{\delta_\epsilon, \delta_{\tilde \epsilon}\}
\quad \forall t\ge T_1.
$$
By Theorem \ref{free-boundary-thm2}(ii),
$$
\liminf_{t\to\infty} \inf_{0\le x\le R}u(t,x;u_0,h_0)>0.
$$
Let
$$
\tilde m_0=\min\{\delta_\epsilon,\delta_{\tilde \epsilon},\liminf_{t\to\infty} \inf_{0\le x\le R}u(t,x;u_0,h_0)\}.
$$
We then have for any $L>0$,
$$
\liminf_{t\to\infty} \inf_{0\le x\le L}u(t,x;u_0,h_0)\ge \tilde m_0.
$$

Finally, we prove

\smallskip

\noindent {\bf Claim 5.} {\it For any $L>0$, $\liminf_{t\to\infty} \inf_{0\le x\le L}u(t,x;u_0,h_0)\ge  m_0$.}

\smallskip

For any given $t_n\to\infty$, without loss of generality, we may assume that
$u(t+t_n,x;u_0,h_0)\to u^*(t,x)$, $v_i(t+t_n,x;u_0,h_0)\to v_i^*(t,x)$, and $a(t+t_n,x)\to a^*(t,x)$, $b^*(t+t_n,x)\to b^*(t,x)$
as $n\to\infty$ locally uniformly. Note that $\inf_{t\in\RR,0\le x<\infty} u^*(t,x)\ge \tilde m_0$ and
 $(u^*(t,x),v_1^*(t,x),v_2^*(t,x))$ is an entire solution of
$$
\begin{cases}
u_t=u_{xx}-\chi_1(uv_{1,x})_x+\chi_2(uv_{2,x})_x+u(a^*(t,x)-b^*(t,x)u),\quad 0<x<\infty\cr
0=v_{1,xx}-\lambda_1 v_1+\mu_1 u,\quad 0<x<\infty\cr
0=v_{2,xx}-\lambda_2 v_2+\mu_2 u,\quad 0<x<\infty\cr
u_x(t,0)=v_{1,x}(t,0)=v_{2,x}(t,0)=0.
\end{cases}
$$
Then by Theorem \ref{half-line-thm}(2)(ii),
\begin{equation}\label{Persistence-U}
u^*(t,x)\ge  m_0\quad \forall\,\, t\in\RR,\, 0\le x<\infty.
\end{equation}
This implies that the claim holds true.
Theorem \ref{free-boundary-thm3} (i) is thus proved.

\medskip

(ii) We prove that for any $L>0$,
\begin{equation}
\label{Eq-asymptotic}
\lim_{t\to\infty} \sup_{0\le x\le L} |u(t,x;u_0,h_0)-u^*(t,x;a,b)|=0.
\end{equation}
We prove it by contradiction. If \eqref{Eq-asymptotic} does not exist, there must be a constant $L_0>0, t_n\to \infty$ such that
\begin{equation*}
 |u(t + t_n, x_n;u_0,h_0) - u^*(t + t_n,x_n;a,b)| \geq \epsilon_0,\quad x_n \in [0,L_0].
\end{equation*}
Without loss of generality, assume  that
$u(t+t_n,x;u_0,h_0)\to \tilde u^*(t,x)$, $v_i(t+t_n,x;u_,h_0)\to \tilde v_i^*(t,x)$, $u^*(t+t_n,x;a,b)\to \tilde u^{**}(t,x)$,
$v_i^*(t,x;a,b)\to \tilde v_i^{**}(t,x)$,  and
$a(t+t_n,x)\to \tilde a(t,x)$, $b(t+t_n,x)\to \tilde b(t,x)$ as $n\to\infty$ locally uniformly (see \cite[Theorem 1.3]{BaoShen1}).
Note that by \eqref{Persistence-U}, $\inf_{t\in\RR,0<x<\infty}\tilde u^*(t,x)>0$ and $\inf_{t\in\RR,0<x<\infty}\tilde u^{**}(t,x)>0$,
and $\tilde u^*(0,x)\neq\tilde u^{**}(0,x)$.  This implies that \eqref{half-line-eq2} has two strictly positive entire solutions
$(\tilde u^*(t,x),\tilde v_1^*(t,x),\tilde v_2^*(t,x))$ and $(\tilde u^{**}(t,x),\tilde v_1^{**}(t,x)$, $\tilde v_2^{**}(t,x))$,
which is a contradiction. Theorem \ref{free-boundary-thm3}(ii) is thus proved.

\smallskip

(iii) By Theorem \ref{half-line-thm}(4)(i),  for any $(\tilde a,\tilde b)\in H(a,b)$, \eqref{half-line-eq2} has a unique strictly positive solution $(u^*(t;\tilde, \tilde b),\frac{\mu}{\lambda} u^*(t;\tilde a,\tilde b))$, where $u(t;\tilde a,\tilde b)$ is the unique strictly
positive solution of the ODE
$$
u^{'}=u(\tilde a(t)-\tilde b(t) u).
$$
(iii) then follows from (ii).
\end{proof}

\section{Chemotaxis models with double free boundaries}

In this section, we consider the spreading-vanishing dynamics of \eqref{two-free-boundary-eq} and give an outline of the proof
of Theorem \ref{free-boundary-thm4}.

\begin{proof} [Outline of the proof of Theorem \ref{free-boundary-thm4}]
Without loss of generality, we assume that $t_0=0$.

Observe that we have either $h_\infty - g_\infty <\infty$ or $h_\infty - g_\infty = \infty$.
\smallskip

 Suppose that $h_\infty - g_\infty <\infty$.
 By the similar arguments as those in Theorem \ref{free-boundary-thm2}(i), we must have $h_\infty - g_\infty \leq l^{**}$ and $\|u(t,\cdot;u_0,g_0,h_0)\|_{C([g(t;u_0,g_0,h_0),h(t;u_0,g_0,h_0)])}\to 0$ as $t\to\infty$, which implies (i).

\smallskip

 Suppose that $h_\infty - g_\infty = \infty$. We prove that (ii) holds.
 We first claim that $h_\infty = -g_\infty = \infty.$ If the claim does not hold, without loss of generality, we may assume that $-\infty < g_\infty < h_\infty = \infty.$ By the similar arguments as those in Theorem \ref{free-boundary-thm2}(1), we have $g'(t)\to 0$ as $t\to \infty$.
 by the arguments of Theorem \ref{free-boundary-thm3}(i), for any $g_\infty<l_-<l_+$ with $l_+-l_-\gg 1$,
\begin{equation*}
 \liminf_{t\to\infty}  \inf_{l_-\le x\le l_+} u(t,x;u_0,h_0,g_0) >0.
\end{equation*}
Let $t_n\to\infty$ be such that $a(t + t_n,x)\to a^*(t,x)$, $b(t +t_n,x) \to  b^*(t,x )$, and $u(t+t_n,x;u_0,h_0,g_0) \to u^*(t,x)$,
$v_i(t+t_n,x;u_0,g_0,h_0)\to v^*_i(t,x)$ as $n\to\infty$ locally uniformly. Then $u^*(t,x)$ is the solution of
\begin{equation*}
\begin{cases}
 u_t = u_{xx}-\chi_1( u v_{1,x}^*)_x+\chi_2 (u v_{2,x}^*)_x+ u(a^*(t,x) - b^*(t,x)u),\quad g_\infty < x< \infty,
 \\
 u(t,g_\infty) = 0,
\end{cases}
\end{equation*}
and $\inf_{x\in [l_-,l_+]} u^*(t,x) >0.$ Then by Hopf Lemma for parabolic equations,
\begin{equation*}
 u_x^*(g_\infty,t) > 0.
\end{equation*}
This implies that
\begin{equation*}
 g'(t + t_n;u_0,h_0,g_0)\to -\nu u_x^*(g_\infty,t) < 0,
\end{equation*}
which contradicts to the fact that $g'(t;u_0,h_0,g_0)\to 0$ as $t\to\infty.$ Hence $(g_\infty,h_\infty) = (-\infty,\infty)$.

 By the similar argument as those in Theorem \ref{free-boundary-thm2}(ii), we have
\begin{equation*}
\liminf_{t\to\infty} \inf_{|x|\le L} u(t,x;u_0,h_0,g_0)>0\quad \forall\,\, L>0.
\end{equation*}

The remaining of (ii) follows from the arguments of Theorem \ref{free-boundary-thm3}(ii).
\end{proof}

\section*{Acknowledgement}
The authors would also like to thank the anonymous referee for valuable comments and suggestions which improved the presentation of this paper considerably.

\end{document}